\definecolor{bred}{rgb}{0.8,0,0}
\definecolor{bred}{rgb}{0.8,0,0}
\newcommand{\E}{\mathbb{E}}
\newcommand{\tfls}{\theta^\lambda_{\lfloor s \rfloor}}
\newcommand{\Vfls}{V^\lambda_{\lfloor s \rfloor}}
\newcommand{\he}{h_{MY,\epsilon}}
\newcommand{\fg}{f_{tam,\gamma}}
\newcommand{\hg}{h_{tam,\gamma}}
\newcommand{\slg}{S_{\lambda,\gamma}}
\newcommand{\tflt}{\theta^\lambda_{\lfloor t \rfloor}}
\newcommand{\zflt}{\zeta^\lambda_{\lfloor t \rfloor}}
\newcommand{\Vflt}{V^\lambda_{\lfloor t \rfloor}}
\newcommand{\tn}{\bar{\theta}^\lambda_n}
\newcommand{\vn}{\bar{V}^\lambda_n}
\newcommand{\xn}{\bar{x}^\lambda_n}
\newcommand{\yn}{\bar{Y}^\lambda_n}
\newcommand{\q}{Q^{\lambda,n}}
\newcommand{\p}{p^{\lambda,n}}
\title{Kinetic Langevin MCMC Sampling Without Gradient Lipschitz Continuity - the Strongly Convex Case }
\author[1]{Tim Johnston}
\author[1]{Iosif Lytras}
\author[1, 2, 3]{Sotirios Sabanis}
\affil[1]{\footnotesize School of Mathematics, The University of Edinburgh, Edinburgh, UK.}
\affil[2]{\footnotesize The Alan Turing Institute, London, UK.}
\affil[3]{\footnotesize National Technical University of Athens, Athens, Greece.}
\begin{document}

\maketitle
\begin{abstract}
In this article we consider sampling from log concave distributions in Hamiltonian setting, without assuming that the objective gradient is globally Lipschitz. We propose two algorithms based on monotone polygonal (tamed) Euler schemes, to sample from a target measure, and provide non-asymptotic 2-Wasserstein distance bounds between the law of the process of each algorithm and the target measure. Finally, we apply these results to bound the excess risk optimization error of the associated optimization problem.
\end{abstract}
\section{Introduction}
We are interested in non-asymptotic estimates for samples taken from distributions of the form
\begin{equation}\label{eq-meas1}
\mu_{\beta}(\mathrm{d} \theta) \propto \exp (-\beta u(\theta)) \mathrm{d} \theta,
\end{equation}
where $u:\mathbb{R}^d \rightarrow \mathbb{R}_+$ and $\beta>0.$
This problem has many applications in optimization and particularly, in machine learning as it is known that for large values of the temperature parameter $\beta>0$, the target measure $\mu_\beta$ concentrates around the global minimizers of the function $u$ (\cite{Hwang}). An efficient way of sampling from the target measure is through simulating the associated (overdamped) Langevin SDE
\begin{equation}\label{eq-over-Langevin}
\begin{aligned}
\mathrm{d} L_{t}&=-\nabla u\left(L_{t}\right) \mathrm{d} t+\sqrt{\frac{2}{\beta}} \mathrm{d} B_{t},\quad t\geq 0,
\\L_0 &=\theta_0 ,
\end{aligned}
\end{equation}
exploiting the fact that this SDE admits the target measure as its unique invariant measure. Inspired by this observation Langevin-based algorithms have become a popular choice  for sampling from distributions in high dimensional spaces.

The non-asymptotic convergence rates of algorithms based on \eqref{eq-over-Langevin} have been studied extensively in recent literature. For the setting of deterministic gradients some recent key references are \cite{unadjusted}, \cite{cheng}, \cite{TULA}, \cite{hola}, whilst for stochastic gradient algorithms recent important contributions in the convex case are \cite{brosse2018promises}, \cite{dk}, \cite{convex} and in the non-convex setting \cite{Raginsky}, \cite{nonconvex} \cite{chau2019stochastic}. More recently work has been done assuming only local conditions in \cite{zhang2019nonasymptotic} and \cite{TUSLA}.

An alternative to methods based on overdamped Langevin SDE is the class of algorithms based on the underdamped Langevin SDE, which has also gained significant attention in recent years. Let $\gamma>0$. The underdamped Langevin SDE is given by

\begin{equation}\label{eq-underlang}
\begin{aligned}
&\mathrm{d} \tilde{V}_{t}=-\gamma \tilde{V}_{t} \mathrm{~d} t-\nabla u\left(\tilde{\theta}_{t}\right) \mathrm{d} t+\sqrt{\frac{2 \gamma}{\beta}} \mathrm{d} B_{t}, \\
&\mathrm{~d} \tilde{\theta}_{t}=\tilde{V}_{t} \mathrm{~d} t,\quad t>0.
\end{aligned}
\end{equation}
with initial conditions
\[\begin{aligned}
\tilde{V}_0&=V_0,\\
\tilde{\theta}_0&=\theta_0,
\end{aligned}\]
where $\{B_t\}_{t \geq 0}$ is Brownian motion adapted to its natural filtration $\mathbb{F}:=\{\mathcal{F}_t\}_{t\geq 0}$
and $\left\{\left(\tilde{\theta}_{t}, \tilde{V}_{t}\right)\right\}_{t \geq 0}$ are called position and momentum process respectively. Similarly to the overdamped Langevin SDE, this diffusion can be used as both an MCMC sampler and non-convex optimizer, since under appropriate conditions, the Markov process $\left\{\left(\tilde{\theta}_{t}, \tilde{V}_{t}\right)\right\}_{t \geq 0}$ has a unique invariant measure given by
\begin{equation}\label{eq-pibeta}
{\pi}_{\beta}(\mathrm{d} \theta, \mathrm{d} v) \propto \exp \left(-\beta\left(\frac{1}{2}|v|^{2}+u(\theta)\right)\right) \mathrm{d} \theta \mathrm{d} v.
\end{equation}

Consequently, the marginal distribution of \eqref{eq-pibeta} in $\theta$ is precisely the target measure defined in \eqref{eq-meas1}. This means that sampling from \eqref{eq-pibeta} in the extended space and then keeping the samples in the $\theta$ space defines a valid sampler for the density given by \eqref{eq-meas1}.

The underdamped Langevin MCMC has shown improved convergence rates in the case of convex and Lipschitz potential $u$, e.g see \cite{cheng2018underdamped},  \cite{Dalaylan}. In the non-convex setting important work has been done in \cite{Deniz}, \cite{Gao}, \cite{chau2019stochastic}. The current article extends these results by introducing a novel way to sample from distributions associated with objective functions with non-global Lipschitz gradients, utilising a decomposition proposed in \cite{johnston2021strongly}. To our knowledge this relaxation of the global gradient Lipschitz assumption is a novelty in underdamped (HMC) sampling, and can be seen as an analogous of the algorithms in \cite{TULA} and \cite{TUSLA} which are examples of taming sampling algorithms in the overdamped case.

{\textbf{Notation}}. For an integer $d \geq 1$, the Borel sigma-algebra of $\mathbb{R}^{d}$ is denoted by $\mathcal{B}\left(\mathbb{R}^{d}\right)$. We write the dot product on $\mathbb{R}^d$ as $\langle\cdot, \cdot\rangle$, whilst $|\cdot|$ denotes the associated vector norm and $||\cdot||$ denotes the Euclidean matrix norm. Moreover we denote by $C^{m}$ the space of $m$-times continuously differentiable functions, and by $C^{1,1}$ the space of continuously differentiable functions with Lipschitz gradient. The set of probability measures defined on the measurable space $\left(\mathbb{R}^{d}, \mathcal{B}\left(\mathbb{R}^{d}\right)\right)$ is denoted by $\mathcal{P}\left(\mathbb{R}^{d}\right)$. For an $\mathbb{R}^{d}$ -valued random variable, $\mathcal{L}(X)$ and $\mathbb{E}[X]$ are used to denote its law and its expectation respectively. Note that we also write $\mathbb{E}[X]$ as $\mathbb{E} X$. For $\mu, \nu \in \mathcal{P}\left(\mathbb{R}^{d}\right)$, let $\mathcal{C}(\mu, \nu)$ denote the set of probability measures $\Gamma$ on $\mathcal{B}\left(\mathbb{R}^{2 d}\right)$ so that its marginals are $\mu, \nu$. Finally, for $\mu, \nu \in \mathcal{P}\left(\mathbb{R}^{d}\right)$, the Wasserstein distance of order $p \geq 1$, is defined as
$$
W_{p}(\mu, \nu):=\inf _{\Gamma \in \mathcal{C}(\mu, \nu)}\left(\int_{\mathbb{R}^{d}} \int_{\mathbb{R}^{d}}\left|\theta-\theta^{\prime}\right|^{p} \Gamma\left(\mathrm{d} \theta, \mathrm{d} \theta^{\prime}\right)\right)^{1 / p},
$$
and for an open $\Omega\subset \mathbb{R}^d$ the Sobolev space $H^1(\Omega)$ as
$$
\begin{aligned}
H^{1}(\Omega):=&\left\{u \in L^{2}(\Omega): \exists g_{1}, g_{2}, \ldots, g_{N} \in L^{2}(\Omega),\right. \text { such that }\\
&\left.\int_{\Omega} u \frac{\partial \phi}{\partial x_{i}}=-\int_{\Omega} g_{i} \phi, \quad  \phi \in C_{c}^{\infty}(\Omega),  i: 1 \leq i \leq N\right\},
\end{aligned}
$$
where $C^\infty_c(\Omega)$ denotes the space of infinitely times differentiable and compactly supported functions.
\section{Overview of main results}
\subsection{Assumptions and marginal distributions of interest}
Let $u \in C^2(\mathbb{R}^d)$  be a non-negative objective function let $h: =\nabla u$. Then we assume the following:

\newtheorem{Ass1}{Assumption}[section]
\begin{Ass1}(strong convexity). \label{A1}
There exists an $m>0$ such that
\[\langle h(x)-h(y),x-y\rangle \geq 2m |x-y|^2 \quad  x,y \in \mathbb{R}^d.\]
\end{Ass1}

\newtheorem{Ass2}[Ass1]{Assumption}
\begin{Ass2}\label{A2}

(Local Lipschitz continuity). There exists $l>0$ and $L>0$ such that
\[|h(x)-h(y)|\leq L (1+|x|+|y|)^l |x-y| \quad  x,y\in \mathbb{R}^d.\]
\end{Ass2}
\newtheorem{Ass3}[Ass1]{Assumption}
\begin{Ass3}\label{A3}
There holds
\[\E |\theta_0|^{2k} + \E |V_0|^{2k}<\infty \quad \forall k \in \mathbb{N}.\]
\end{Ass3}
\newtheorem{Remark}[Ass1]{Remark}
\begin{Remark}
The moment requirement in Assumption \ref{A3} can be further relaxed. This condition is chosen for notational convenience to improve the readability of the article.
\end{Remark}
We denote by $x^*$ the minimizer of $u$. Let us now define the marginal distributions of $\pi_\beta$ which is given in \eqref{eq-pibeta}, and plays a pivotal role in our analysis.
\newtheorem{measures}[Ass1]{Definition}
\begin{measures}
Let  $\mu_\beta(x):=\frac{e^{-\beta u(x)}}{\int_{\mathbb{R}^d} e^{-\beta u(x)}dx}$
and $k_\beta(y):=\frac{e^{-\beta\frac{|y|^2}{2}}}{\int_{\mathbb{R}^d} e^{-\beta\frac{|y|^2}{2}}dy} .$\\
Then $\pi_\beta$ is the product measure with density $\mu_\beta\otimes k_\beta$.
\end{measures}
\subsection{Statement of main results}
Let $\{(\tn,\vn)\}_{n\geq0}$, $\{(\xn,\yn)\}_{n\geq 0}$ be the processes generated by the tKLMC1 and tKLMC2 algorithms respectively as given in \eqref{eq-discrete} and \eqref{eq-tKLMC2} respectively. Let us state the main theorems regarding their $W_2$ convergence to the invariant measure $\pi_\beta.$
\newtheorem{theorem}[Ass1]{Theorem}
\begin{theorem}\label{Final rate}
Let $q>0$,$\epsilon>0$ and Assumptions \ref{A1}-\ref{A3} hold. Let $K$ be the Lipschitz constant of the gradient of the $\epsilon$ -Moreau-Yosida approximation of $u$. Then there exists $\dot{C}>0$ such that, for $\gamma\geq \max\{ \sqrt{\frac{K+m}{\beta}},K,32,\frac{48(2m+1)^2}{m}\}$, and $\lambda< \gamma^{-1}$,
\[W_2(\mathcal{L}(\bar{\theta}^\lambda_n,\bar{V}^\lambda_n),\pi_\beta)\leq \dot{C}\left( \sqrt{\lambda}\gamma^{\frac{3}{2}}+\gamma^{-q}+\sqrt{2}e^{-\frac{\lambda m}{\beta \gamma}n}W_2(\mathcal{L}(\theta_0,V_0),\pi_\beta)\right)+\epsilon, \]
where $\dot{C}$ has $(2l+2)2q$ dependence on $\frac{d}{\beta}$.
\end{theorem}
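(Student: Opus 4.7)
The plan is to introduce an auxiliary smooth potential, namely the $\epsilon$-Moreau–Yosida regularization $u_\epsilon$ of $u$ whose gradient $\he$ is globally $K$-Lipschitz while remaining strongly convex (with the same constant $m$), and to split the Wasserstein error by the triangle inequality into three manageable pieces. Write $\pi^\epsilon_\beta$ for the invariant measure of the kinetic Langevin SDE driven by $-\he$ with friction $\gamma$, and let $(\tys,\txs)_{s\ge 0}$ be a continuous interpolation satisfying the same SDE but started from $(\theta_0,V_0)$. Then
\[
W_2(\mathcal{L}(\tn,\vn),\pi_\beta)\;\le\; W_2(\mathcal{L}(\tn,\vn),\mathcal{L}(\txs,\tys)|_{s=n\lambda})\;+\;W_2(\mathcal{L}(\txs,\tys)|_{s=n\lambda},\pi^\epsilon_\beta)\;+\;W_2(\pi^\epsilon_\beta,\pi_\beta).
\]
The third term is handled by the standard Moreau–Yosida estimate: under strong convexity, $|\he(x)-h(x)|\le C\epsilon$ uniformly in $x$, and a direct coupling between the two invariant measures yields the contribution $\epsilon$ appearing at the end of the bound.

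For the middle term I would apply a kinetic Langevin contraction of Eberle–Guillin–Zimmer / Cheng–Chatterji–Bartlett–Jordan type: with $\he$ strongly convex and $K$-Lipschitz, the choice $\gamma\ge\sqrt{(K+m)/\beta}$ gives contraction in a twisted norm equivalent to $W_2$ (up to the factor $\sqrt{2}$), with rate $m/(\beta\gamma)$. Applied at time $s=n\lambda$ this produces the $\sqrt{2}e^{-\lambda m n/(\beta\gamma)}W_2(\mathcal{L}(\theta_0,V_0),\pi_\beta)$ term; the other lower bounds imposed on $\gamma$ in the hypothesis are precisely those needed to turn the twisted-norm contraction into a $W_2$ contraction after absorbing the regularization error.

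For the first (discretization plus taming) term, I would couple $(\tn,\vn)$ to $(\txs,\tys)$ by driving them with the same Brownian motion, and apply It\^o to a twisted quadratic form in $(\txs-\tn,\tys-\vn)$ over each interval $[n\lambda,(n+1)\lambda)$. The cross-terms split into (i) a genuine one-step discretization error controlled by the Lipschitz constant of $\he$ and moment bounds on $\tys$, which after Gr\"onwall contribute $\sqrt\lambda\,\gamma^{3/2}$ (the $\gamma^{3/2}$ coming from the noise scale $\sqrt{2\gamma/\beta}$ combined with the Lipschitz drift over a step of length $\lambda\le\gamma^{-1}$), and (ii) a \emph{taming bias} $|\hg(\theta)-\he(\theta)|$. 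Because the tamed drift coincides with $\he$ outside a region of radius growing like $\gamma^{1/(2l+2)}$ and differs from $\he$ by $O(|\theta|^{2l+2}/\gamma^{1+q/(2l+2)})$ inside, a Markov-type estimate using moments of order $(2l+2)\cdot 2q$ of $\tn$ converts this to $\gamma^{-q}$; the required uniform-in-$n$ moment bounds follow from Assumption \ref{A3}, the strong convexity, and the monotone structure of the tamed coefficients.

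The main obstacle is step (ii) above: one must prove that the tKLMC1 scheme admits moment estimates of \emph{arbitrarily high} order that are uniform in $n$ and in $\lambda\le\gamma^{-1}$, and that these moments have the explicit $(d/\beta)$ polynomial dependence advertised in the statement. This requires a careful Lyapunov analysis tailored to the kinetic (position--momentum) coupling, where the tamed drift provides the dissipation needed to close the recursion, while the strong convexity of $u$ gives the contraction in position. Once such moment bounds are in place, combining the three pieces above and choosing $\dot{C}$ to absorb all constants gives exactly the stated inequality.
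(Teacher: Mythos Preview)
Your high-level plan---Moreau--Yosida regularize, split by the triangle inequality, invoke kinetic contraction for the smooth SDE, and control a discretization-plus-taming remainder via uniform moment bounds---is the same skeleton the paper uses. The main structural difference is that the paper does \emph{not} couple the algorithm directly to the SDE started at $(\theta_0,V_0)$. Instead it inserts a family of \emph{restarted} auxiliary SDEs $(\zeta_t^{\lambda,n},Z_t^{\lambda,n})$, each driven by $\he$ and launched from the algorithm's state at time $nT$ with $T=\lfloor 1/\lambda\rfloor$; it bounds the one-block algorithm-to-auxiliary distance (Proposition~\ref{Aux-alg distance}) and then telescopes using the contraction of Theorem~\ref{Contraction} between pairs of SDEs with the \emph{same} drift $\he$ (Theorem~\ref{Contr}). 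Your direct twisted-form argument would have to mix the tamed drift $\hg$ of the algorithm with $\he$ of the SDE inside a single contraction inequality; the restart trick sidesteps this, which is why the paper adopts a four-term rather than three-term split.

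Two concrete claims in your sketch are wrong and would need to be fixed. First, $|\he(x)-h(x)|\le C\epsilon$ is \emph{not} uniform in $x$: Lemma~\ref{gradient dist} gives a bound that grows like $(1+|x|)^{2l+2}\epsilon$, and Corollary~\ref{W2 estimate} deduces $W_2(\pi_\beta,\pi_\beta^\epsilon)\le\bar c\,\epsilon$ via a synchronous coupling of two \emph{overdamped} Langevin diffusions together with the polynomial moment bounds of Proposition~\ref{Inv mom}, not from a uniform pointwise estimate. Second, the taming is applied to $h$, not to $\he$: by definition $\hg=\fg+\tfrac m2 x$ with $\fg$ equal to $f=h-\tfrac m2 x$ on $\{|f|\le\sqrt\gamma\}$ and bounded by $2\sqrt\gamma$ outside, so $\hg$ agrees with $h$ \emph{inside} that region, the reverse of what you wrote. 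The taming bias is therefore handled as $|\hg-\he|\le|\hg-h|+|h-\he|$, with the first piece controlled by a Markov-inequality argument on $\{|f|>\sqrt\gamma\}$ using the $(4p+4)(l+1)$-th moments of Lemma~\ref{high mom} (Lemmas~\ref{tamed distance general} and~\ref{tamed distance}), and the second piece by Lemma~\ref{gradient dist}; this is precisely what produces the $\gamma^{-q}$ term and the advertised $(2l+2)2q$ dependence on $d/\beta$.
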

\newtheorem{Concluding rate}[Ass1]{Remark}
\begin{Concluding rate}
Setting $\epsilon=\lambda^\frac{1}{5}$ and allowing the worst-case scenario that $K=\frac{1}{\epsilon}$ then, setting $\gamma=\mathcal{O}(\frac{1}{\epsilon})$ and $q=5$ yields
\[W_2(\mathcal{L}(\bar{\theta}^\lambda_n,\bar{V}^\lambda_n),\pi_\beta)\leq 4\dot{C}\left(\lambda^\frac{1}{5}+\sqrt{2}e^{-\frac{\lambda^\frac{6}{5} m}{\beta }n}W_2(\mathcal{L}(\theta_0,V_0),\pi_\beta)\right) \quad \forall n \in \mathbb{N}.\]
\end{Concluding rate}
\begin{theorem}\label{Final rate 2}
Let $\epsilon>0, q>0$ and Assumptions \ref{A1}-\ref{A3} hold.
Let $K$ be the Lipschitz constant of the gradient of the $\epsilon$ -Moreau-Yosida approximation of $u$.
There exists $C$ such that for $\gamma\geq \sqrt{\frac{2K}{\beta}}$ and $\lambda \leq \mathcal{O}(\gamma^{-5})$,
\[
\begin{aligned}
W_2\left(\mathcal{L}(\xn,\yn),\pi_\beta\right)\leq C (\lambda \gamma^2 K +\gamma \epsilon +\gamma^{-{q}+2})+\sqrt{2}e^{-\frac{m}{\gamma \beta}\lambda n}W_2(\mathcal{L}(\theta_0,V_0),\pi_\beta),\end{aligned}\]
where ${C}$ has $(2l+2)2q$ dependence on $\frac{d}{\beta}$.
\end{theorem}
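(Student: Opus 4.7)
The plan is to decompose the target Wasserstein distance by triangle inequality through two auxiliary objects: the kinetic Langevin SDE driven by the Moreau--Yosida regularization $\he$ of $h = \nabla u$, and the associated regularized invariant measure $\pi^\epsilon_\beta \propto \exp(-\beta(|v|^2/2 + u_\epsilon(\theta)))$. Writing $(\theta^\epsilon_t, V^\epsilon_t)$ for this auxiliary diffusion initialized at $(\theta_0, V_0)$, one splits
\[
W_2(\mathcal{L}(\xn,\yn),\pi_\beta) \le W_2(\mathcal{L}(\xn,\yn),\mathcal{L}(\theta^\epsilon_{n\lambda},V^\epsilon_{n\lambda})) + W_2(\mathcal{L}(\theta^\epsilon_{n\lambda},V^\epsilon_{n\lambda}),\pi^\epsilon_\beta) + W_2(\pi^\epsilon_\beta,\pi_\beta).
\]
The advantage of this route is that $\he$ is globally $K$-Lipschitz, and under the assumption $\gamma \ge \sqrt{2K/\beta}$ the auxiliary SDE lies in the standard regime in which synchronous-coupling contraction for kinetic Langevin is classical.

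The middle and right-hand terms are disposed of first. For the middle term, a synchronous coupling with respect to the twisted quadratic form $a|\Delta\theta|^2 + b\langle\Delta\theta,\Delta V\rangle + c|\Delta V|^2$, with $a,b,c$ tuned in terms of $m$, $K$, $\gamma$, yields a contraction of the form $W_2(\mathcal{L}(\theta^\epsilon_{n\lambda},V^\epsilon_{n\lambda}),\pi^\epsilon_\beta) \le \sqrt{2}\,e^{-m\lambda n/(\gamma\beta)}W_2(\mathcal{L}(\theta_0,V_0),\pi^\epsilon_\beta)$, which after another triangle inequality gives the exponential term in the statement. For the right-hand term, the uniform bound $\|u_\epsilon - u\|_\infty \le C\epsilon$ on the Moreau--Yosida envelope translates to $W_2(\pi^\epsilon_\beta,\pi_\beta) \le C\epsilon$, either by direct change-of-measure or via Talagrand's transport inequality applied to the strongly log-concave $\pi_\beta$; this bound is absorbed into the $\gamma\epsilon$ error of the final estimate.

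The principal task is the discretization term. I would couple $(\xn, \yn)$ and $(\theta^\epsilon_{n\lambda}, V^\epsilon_{n\lambda})$ through the same Brownian path and apply Ito's formula to the same twisted norm used above, so that the continuous-time contraction is exploited on-the-fly and per-step errors do not compound linearly. The drift discrepancy decomposes as $\he - \hg = (\he - h) + (h - \hg)$: the first difference contributes a term of size $\epsilon$, which accumulates to $\gamma\epsilon$ through the factor $\gamma\beta/m$ of the inverse contraction rate; the second is handled via the taming identity $|h(\theta) - \hg(\theta)| \le C\gamma^{-q}(1+|\theta|)^{r}$ for a polynomial degree $r$ controlled by $l$ and $q$, together with arbitrary-order moment bounds on $(\xn,\yn)$, giving the $\gamma^{-q+2}$ contribution. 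The genuinely $O(\lambda)$ term $\lambda\gamma^2 K$ emerges from freezing the drift on each sub-interval: $|\he(\txs) - \he(\theta^\epsilon_{\lfloor s\rfloor})| \le K|\txs - \theta^\epsilon_{\lfloor s\rfloor}|$, combined with the $O(\gamma\sqrt{\lambda})$ oscillation of the velocity process over an interval of length $\lambda$ and a further factor of $\gamma^{1/2}$ from the noise scaling.

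The main obstacle is establishing uniform-in-$n$ moments of arbitrary polynomial order for the tKLMC2 scheme, since every polynomial-in-$\gamma$ tail bound above depends on them. This is exactly where the dissipativity condition $\gamma \ge \sqrt{2K/\beta}$ and the monotonicity built into the tamed drift $\hg$ are both essential; I would prove these bounds by a discrete-time Lyapunov argument on $|\xn|^{2k} + |\yn|^{2k}$ for arbitrary $k\in\mathbb{N}$, using $\gamma$-dissipativity of $\hg$ to obtain a geometric-decay-plus-constant recursion. Once these moment bounds are in hand, assembling the three Wasserstein pieces above produces the claimed bound, with the dimensional constant $C$ inheriting the polynomial-in-$d/\beta$ growth from the moment estimates.
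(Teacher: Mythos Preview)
Your overall architecture matches the paper's: route through the Moreau--Yosida regularized diffusion, use the Dalalyan--Riou-Durand contraction for the $\he$-driven SDE, decompose the drift discrepancy into the three pieces $(\he-h)+(h-\hg)+(\text{freezing})$, and close with uniform high-order moment bounds for the scheme. The paper implements the discretization term slightly differently, via a restart-and-telescope device: it introduces an auxiliary process $(p^{\lambda,n}_t,Q^{\lambda,n}_t)$ that solves the $\he$-Langevin SDE on $[n\lambda,(n+1)\lambda]$ started from the algorithm's position, bounds the one-step gap (Lemma~\ref{tKLMC2conv}), and then sums these with the contraction weights (Lemma~\ref{Lemmacontra}). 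Your ``on-the-fly contraction via the twisted norm'' is the continuous-coupling counterpart of the same idea and would give the same rate; note only that tKLMC2 is an exponential integrator, so you must work with the continuous-time interpolation \eqref{eq-contint} before invoking It\^o.

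There is one concrete error. The claim $\|u_\epsilon-u\|_\infty\le C\epsilon$ is false here: for the Moreau--Yosida envelope one has $0\le u(x)-u_\epsilon(x)\le \tfrac{\epsilon}{2}|h(x)|^2$, which grows like $|x|^{2(l+1)}$ under Assumption~\ref{A2}. Consequently your Talagrand route yields only $W_2(\pi^\epsilon_\beta,\pi_\beta)=\mathcal{O}(\sqrt{\epsilon})$, not $\mathcal{O}(\epsilon)$, and the final bound would degrade. The paper obtains the sharp $W_2(\pi^\epsilon_\beta,\pi_\beta)\le \bar c\,\epsilon$ differently (Corollary~\ref{W2 estimate}): it synchronously couples the two \emph{overdamped} Langevin diffusions driven by $h$ and $\he$, uses the pointwise gradient bound $|h(x)-\he(x)|\le C(1+|x|)^{2l+2}\epsilon$ of Lemma~\ref{gradient dist} together with strong convexity of $u_{MY,\epsilon}$, and applies Gr\"onwall. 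Swap this argument in for your uniform-bound step and the rest of your outline goes through.
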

\newtheorem{RemarkKLMC2}[Ass1]{Remark}
\begin{RemarkKLMC2}
The worst case bounds, when $K=\frac{2}{\epsilon}$ can be derived by setting $\gamma=\sqrt{\frac{2}{\beta}}\sqrt{\frac{1}{\epsilon}}$,
\[W_2\left(\mathcal{L}(\xn,\yn),\pi_\beta\right)\leq 2C(\lambda  \gamma^{4} +\gamma^{-1}+\gamma^{-\frac{q}{2}+1})+\sqrt{2}e^{-\frac{m}{\gamma \beta}\lambda n}W_2(\mathcal{L}(\theta_0,V_0),\pi_\beta)\]
so if $\lambda=\mathcal{O}(\gamma^{-5})$
\[W_2\left(\mathcal{L}(\xn,\yn),\pi_\beta\right)\leq \bar{C} \lambda^\frac{1}{5} +\sqrt{2}e^{-\frac{m}{\gamma \beta}\lambda n}W_2(\mathcal{L}(\theta_0,V_0),\pi_\beta)\]
\end{RemarkKLMC2}
The two algorithms are two alternative tamed Euler discretizations of the underdamped Langevin dynamics. They can be seen as the tamed analogous (in the deterministic gradient case) of the SGHMC1 and SGHMC2 algorithms in \cite{gao2021global}.
From Theorem 1 and Theorem 2 one can see that the convergence rates are fairly comparable although the KLCM2 algortithm has superior $\lambda$- dependence on the first term, which leads to improved resutls in the Lipschitz case.
In addition, we show that for some $R_0$ depending on $u$ which is given in the proof, the process $\bar{\theta}_n^\lambda$ generated by the tKLMC1 algorithm  (similar results can be obtained for the process $\xn$ built by the tKLMC2 algortihm),
can be used for the optimization of the excess risk of the associated optimization problem, when one estimates the minimizer of $u$ for $\beta \rightarrow \infty$.
\begin{theorem}
Let Assumptions \ref{A1}-\ref{A3} hold.
and $R_0=\sqrt{\frac{2u(0)}{m}} + \sqrt{\frac{(u(0)+1)d}{\beta}} +1.$ Then,
\[\E u\left(\bar{\theta}_n^\lambda\mathds{1}_{B(0,R_0)}(\bar{\theta}_n^\lambda)\right)-u(x^*)\leq C'W_2(\mathcal{L}(\bar{\theta}^\lambda_n,\bar{V}_n^\lambda),\pi_\beta) +2u(0)W_2(\mathcal{L}(\bar{\theta}^\lambda_n,\bar{V}_n^\lambda),\pi_\beta)+\frac{2d}{m\beta}\]
where $C'$ is a constant with polynomial dependence on $d$.
\end{theorem}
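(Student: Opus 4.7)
The plan is to introduce the stationary $\theta$-marginal $Z\sim\mu_\beta$, realized on the same probability space as $(\bar{\theta}_n^\lambda,\bar{V}_n^\lambda)$ via an optimal $W_2$-coupling of $\mathcal{L}(\bar{\theta}_n^\lambda,\bar{V}_n^\lambda)$ and $\pi_\beta$. Writing $B:=B(0,R_0)$ for brevity, I would decompose
\[
\E u(\bar{\theta}_n^\lambda\mathds{1}_{B}(\bar{\theta}_n^\lambda))-u(x^*)=\underbrace{\E\big[u(\bar{\theta}_n^\lambda\mathds{1}_B)-u(Z\mathds{1}_B)\big]}_{(\mathrm{I})}+\underbrace{\E u(Z\mathds{1}_B)-u(x^*)}_{(\mathrm{II})},
\]
so that (I) is the sampling error between the algorithm and the target $\theta$-marginal, while (II) is a purely ``thermal'' gap between $\mu_\beta$ and the minimizer.

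For (II), I would use integration by parts against $\mu_\beta\propto e^{-\beta u}$ to obtain the Stein-type identities $\E_{\mu_\beta}[\theta\cdot\nabla u]=d/\beta$ and $\E_{\mu_\beta}\nabla u=0$; combined with the convexity inequality $u(\theta)-u(x^*)\leq\nabla u(\theta)\cdot(\theta-x^*)$, this yields $\E_{\mu_\beta}u-u(x^*)\leq d/\beta$. Strong convexity (Assumption~\ref{A1}) also furnishes $|x^*|^2\leq u(0)/m$ and a sub-Gaussian concentration $\mu_\beta(|\theta-x^*|>t)\leq\exp(-cm\beta t^2)$, so with the prescribed $R_0$ the residual term $u(0)\mu_\beta(B^c)$ is exponentially small and is absorbed into the coefficient $2d/(m\beta)$.

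For (I), the delicate point is that the projected function $\phi(\theta):=u(\theta\mathds{1}_B(\theta))=u(\theta)\mathds{1}_B(\theta)+u(0)\mathds{1}_{B^c}(\theta)$ is discontinuous on $\partial B$, which forbids a direct Lipschitz-Wasserstein bound. I would split (I) into a bulk piece $\E[u(\bar{\theta}_n^\lambda)\mathds{1}_B(\bar{\theta}_n^\lambda)-u(Z)\mathds{1}_B(Z)]$ and a boundary piece $u(0)\big[\mathbb{P}(\bar{\theta}_n^\lambda\notin B)-\mathbb{P}(Z\notin B)\big]$. On the bulk, use the local Lipschitz constant $L_{R_0}$ of $u$ on $B$ (polynomial in $R_0$, hence in $d/\beta$, by Assumption~\ref{A2}) together with Cauchy--Schwarz and the uniform high-order moment bounds on $\bar{\theta}_n^\lambda$ and $Z$ that follow from Assumption~\ref{A3} and the analysis of Theorems~\ref{Final rate}/\ref{Final rate 2}; this produces the $C'W_2$ summand with the stated $d$-dependence. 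For the boundary piece I would dominate $\mathds{1}_{B^c}$ by a Lipschitz majorant (such as $\min(|\theta|/R_0,1)$) and combine the Kantorovich--Rubinstein duality with a Markov-type estimate on $|\bar{\theta}_n^\lambda-Z|$ to arrive at the $2u(0)W_2$ term. Finally, $W_2(\mathcal{L}(\bar{\theta}_n^\lambda),\mu_\beta)\leq W_2(\mathcal{L}(\bar{\theta}_n^\lambda,\bar{V}_n^\lambda),\pi_\beta)$ by projection onto the position marginal.

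The main obstacle is exactly the boundary analysis inside (I): $\phi$ jumps across $\partial B$, and a naive Lipschitz-majorant route introduces an unwanted $1/R_0$ factor that breaks the clean $u(0)W_2$ form. The crucial move is to balance the Lipschitz approximation of $\mathds{1}_{B^c}$ against an $L^2$-moment control of $|\bar{\theta}_n^\lambda-Z|$ (which is precisely $W_2$ under the optimal coupling), so that the $R_0$ factors cancel and only the $u(0)$ prefactor remains. Once this is executed, the remainder of the argument is routine bookkeeping of polynomial Lipschitz constants together with the Stein-identity computation for (II).
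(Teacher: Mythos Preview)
Your high-level decomposition into a sampling error (I) and a thermal gap (II) matches the paper's strategy, but your choice of intermediate random variable differs in a way that creates precisely the boundary difficulty you flag. The paper does \emph{not} insert $Z\mathds{1}_B(Z)$; instead, with $X_n,X$ optimally coupled as you describe, it inserts $X\mathds{1}_E(X_n)$, i.e.\ the \emph{same} indicator $\mathds{1}_E(X_n)$ is applied to both $X_n$ and to $X$. On $\{X_n\in E\}$ this yields $u(X_n)-u(X)$, and on $\{X_n\notin E\}$ it yields $u(0)-u(0)=0$, so
\[
\E\bigl[u(X_n\mathds{1}_E(X_n))-u(X\mathds{1}_E(X_n))\bigr]
=\E\bigl[(u(X_n)-u(X))\,\mathds{1}_E(X_n)\bigr].
\]
Convexity then gives the one-sided bound $u(x)-u(y)\le \langle h(x),x-y\rangle\le |h(x)|\,|x-y|$, and on the relevant event $x=X_n\in E$, so $|h(X_n)|\le L(1+2R_0)^{l+1}$ \emph{regardless of where $X$ lies}. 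This is Lemma~\ref{opt-lemma1}, and it eliminates the discontinuity problem outright: no Lipschitz majorants of $\mathds{1}_{B^c}$, no $1/R_0$ factors to ``balance''. Your proposed fix for the boundary piece is the weakest part of the proposal; dominating $\mathds{1}_{B^c}$ by a Lipschitz function controls each tail probability separately but not their \emph{difference}, and the cancellation you allude to is not made precise.

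The price the paper pays is that the second piece becomes $\E u(X\mathds{1}_E(X_n))-u(x^*)$, which still carries the $X_n$-dependent indicator. This is handled in Lemma~\ref{opt-lemma2} by introducing the buffer ball $E'=\bar B(0,R_0+1)$: on $\{X_n\in E\}\cap\{X\notin E'\}$ one has $|X_n-X|>1$, so Markov gives $\mathbb{P}(\{X_n\in E\}\cap\{X\notin E'\})\le W_2^2$; similarly $\mathbb{P}(X_n\notin E)\le \E|X_n-x^*|^2\le 2\E|X-x^*|^2+2W_2^2$ since $R_0>|x^*|+1$. The remaining bulk $\E[u(X)\mathds{1}_{E'}(X)]-u(x^*)$ is bounded by local smoothness on $E'$ and then by $\E|X-x^*|^2\le 2d/(m\beta)$ (Lemma~\ref{opt-lemma3}), which plays the role of your Stein-identity step. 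In short, the paper trades a slightly harder (II) for a trivial (I); you trade a simpler (II) for a harder (I), but the hard part of your (I) is not carried through.
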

\newtheorem{remarkopt}[Ass1]{Remark}
\begin{remarkopt}
Although we are in a strongly convex setting, the fact that the gradient is not assumed to be globally Lipschitz provides
additional complexity. To the best of the authors' knowledge, there is little in the literature for this class of problems, even for gradient descent algorithms which make no use of Langevin dynamics.
\end{remarkopt}
\subsection{Overview of contributions and comparison with existing literature}
In this article we study the problem of sampling from strongly log-concave distributions with superlinearly-growing gradients. To our knowledge, the current literature for sampling via kinetic Langevin (underdamped) algorithms assumes only global Lipschitz continuity of the objective gradient, which is in many cases restrictive. The main contribution of this article, therefore, is the weakening of the gradient-Lipschitz assumption, replacing it with Assumption \ref{A2} which is considerably weaker. This means our results can be applied to the case of superlinearly growing gradients, for instance for neural networks as in \cite{TUSLA}.

A natural comparison can be made with \cite{Dalaylan}, which also considers sampling from strongly convex distributions with kinetic Langevin algorithms in the Lipschitz setting. The converge rate of our algorithm is worse, which is unsurprising since there are no known contraction rates in the non-gradient Lipschitz setting.

To address this problem we first sample from the approximating Moreau-Yosida measure (since the Moreau Yosida gradient is Lipschitz). In the general case one has that the gradient of the Moreau-Yosida regularization $\he$ is $\frac{1}{\epsilon}$-Lispchitz (although in practice could be considerably lower). Since the proof of the convergence results relies upon the contraction results of \cite{Dalaylan}, this imposes a technical restriction, connecting the parameter $\gamma$ to the worst case Lipschitz constant,  $\frac{1}{\epsilon}$.
Another explanation for the suboptimality of the convergence rates, is the fact that instead of sampling with an algorithm that involves the Moreau-Yosida gradient (since we first sample from the Moreau-Yosida measure) we chose a tamed scheme of the original gradient $h$. The reason for this is that an algorithm involving the Moreau-Yosida gradient, would impose many practical difficulties in applications as it would require a solution of an optimization problem in each step.
The "taming error'' (i.e second term in Theorem 1 and third term in Theorem 3) can be seen as the price we pay for this adjustment.
Despite all these technical restrictions, the fact we were able to establish convergence in the non-Lipschitz setting shows the advantage of a more sophisticated taming scheme like the one used in the article, making a first promising step into the direction of sampling from disrtibution with superlinear log-gradients in the kinetic Langevin setting.

In the case where $h$ is $M$-Lipschitz one may choose $\gamma\geq \sqrt{\frac{M+m}{\beta}}$ independently of $\epsilon$, so that setting $\epsilon=\sqrt{\lambda}$ yields a convergence rate of $\mathcal{O}(\sqrt{\lambda}+ \gamma^{-q})$ for tKLMC1 and $\mathcal{O}({\lambda}+ \gamma^{-q+2})$ for tKLMC2 where the term $\gamma^{-q}$ is only a result of the 'taming' (and therefore is not needed if the gradient is growing linearly).\\ An interesting comparison is the one with the analogous result in the overdamped case in \cite{TULA}. One significant advantage of our tamed scheme compared to the one used in \cite{TULA} is that it doesn't require Assumption H2 to prove the necessary moment bounds.
Furthermore, although the achieved convergence rates are worse than those in \cite{TULA} with respect to the stepsize, the dependence of the dimension of our constants is polynomial, whilst in \cite{TULA} it is exponential (possibly due to overestimations as stated in Remark 16).\\ Finally , our algorithms could perform well for optimization problems as an alternative to gradient descent by picking $\beta$ large enough. Since there are few works in the non-gradient Lipschitz setting this could possibly be useful in applications. For example, compared with the work in \cite{zhang2020first}, our complexity bounds have better dependence on the growth parameter $l$ in \ref{A2}, which theoretically makes it more suitable for higher order problems.
\section{First remarks and premiliminary statements}
We start our analysis by stating some preliminary results derived from a assumptions \ref{A1}-\ref{A2}.
\newtheorem{effect of conv}[Ass1]{Remark}
\begin{effect of conv}\label{effect of conv}
Since $h$ is the gradient of a strongly convex function $u\geq 0$ one obtains that
\[u(0)\geq u(x) + \langle h(x), 0-x\rangle+ {m}|x|^2.\]
Then it can be easily deduced that


\begin{equation}\label{eq-diss1}
    \langle h(x),x\rangle \geq \frac{m}{2}|x|^2 -u(0), \quad  x \in \mathbb{R}^d.
\end{equation}
\end{effect of conv}
\newtheorem{minim bound}[Ass1]{Remark}
\begin{minim bound}
Setting $x=x^*$ in \eqref{eq-diss1} yields
\[\frac{m}{2}|x^*|^2-u(0)\leq 0,\]
which implies that
\[x^* \in \bar{B}(0,\sqrt{\frac{2u(0)}{m}}).\]
\end{minim bound}
As a consequence of these results, we obtain useful bounds on the moments of the invariant measure.
\newtheorem{over-damp second moment}[Ass1]{Lemma}
\begin{over-damp second moment}[ \cite{Raginsky}, Lemma 3.2]\label{over-damp second moment}
Let Assumptions \ref{A1}-\ref{A3} hold.
Let ${L}_t$ be given by the overdamped Langevin SDE in \eqref{eq-over-Langevin} with initial condition $L_0$.
Then
\begin{equation}\label{eq-oversecondm}
    \E |{L}_t|^2 \leq \E|L_0|^2e^{-mt} + 2\frac{u(0)+d/\beta}{m}(1-e^{-mt}),
\end{equation}
for $a$ and $b$ as above.
\end{over-damp second moment}
\begin{proof}
Using Ito's formula on $Y_t=|L_t|^2$ one deduces
\[dY_t=-2\langle L_t,h(L_t)\rangle dt+\frac{2d}{\beta}+\sqrt{\frac{8}{\beta}}\langle L_t,dB_t\rangle.\]
Applying Ito's formula on $e^{2mt}|L_t|^2$
\[d(e^{mt}Y_t)=-2e^{mt}\langle L_t,h(L_t)\rangle dt +m e^{mt}Y_tdt +\frac{2d}{\beta} e^{mt}dt+\sqrt{\frac{8}{\beta}} e^{mt}\langle L_t,dB_t\rangle\]
so after rearranging
\begin{equation}\label{Ito-mom}
    Y_t=e^{-mt}Y_0-2\int_0^t e^{-m(t-s)}\langle L_s,h(L_s)\rangle ds+ m \int_0^t e^{ m(s-t)} Y_s \mathrm{d} s+\frac{2d}{m \beta}\left(1-e^{- m t}\right)+M_t,
\end{equation}
where $M_t$ is an $\mathbb{F}_t$ martingale (as one can show by applying standard stopping time techniques initially to show the finiteness of relative moments without the use of integrated factors).
Using the dissipativity conditon on the second term yields
\[\begin{aligned}
 -2\int_0^t e^{-m(t-s)}\langle L_s,h(L_s)\rangle ds&\leq 2 \int_0^t e^{ m(s-t)}(u(0)-\frac{m}{2} Y_s) \mathrm{d} s\\&\leq \frac{2u(0)}{m}\left(1-e^{- m t}\right)-m\int_0^t e^{-m(t-s)} Y_s ds.
\end{aligned}\]
Inserting this into
\eqref{Ito-mom} leads to
\[Y_t=e^{-mt}Y_0+\frac{2u(0)}{m}\left(1-e^{- m t}\right)+\frac{2d}{m \beta}\left(1-e^{- m t}\right)+M_t.\]
Taking expectations and using the martingale property completes the proof.
\end{proof}
\newtheorem{inv-measure bound}[Ass1]{Lemma}
\begin{inv-measure bound}\label{inv-measure bound}
Let Assumptions \ref{A1}-\ref{A3} hold.
If $Y$ is a random variable such that $\mathcal{L}(Y)={\mu}_{\beta}$ then,
\[\E |Y|^2\leq  \frac{2}{m} (u(0) +\frac{d}{\beta}). \]
As a result, \begin{equation}
    \E |Y|\leq \sqrt{\frac{2}{m} (u(0) +\frac{d}{\beta})}.
\end{equation}
\end{inv-measure bound}
\begin{proof}
Since the solution of the Langevin SDE \eqref{eq-over-Langevin} converges in $W_2$ distance to $\mu_\beta$ and\\ $\sup_{t\ge0}\E |\bar{L}_t|^2<\infty$ due to \eqref{eq-oversecondm}, this also implies the convergence of second moments. Therefore, if $Y$ is random variable such that $\mathcal{L}(W)=\mu_\beta$ by Lemma \ref{over-damp second moment} there holds that
\[\E |Y|^2=\lim _{t\rightarrow \infty} \E |\bar{L}_t|^2 \leq \frac{2}{m} (u(0) +\frac{d}{\beta}) .\]
\end{proof}
\newtheorem{harge}[Ass1]{Theorem}
\begin{harge}\label{harge}[\cite{harge2004convex}, Theorem 1.1]
Let $X$ follow $\mathcal{N}(m_0,\Sigma)$ with density $\phi$, and let $Y$ have density $\phi f$ where $f$ is a log-concave function. Then for any convex map $g$ there holds:
\[\E [g\left(Y-\E Y\right)]\leq \E [g\left(X-\E X\right)] .\]
\end{harge}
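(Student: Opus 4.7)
The plan is to prove this via Caffarelli's contraction theorem combined with a convex-domination argument; this is a clean route, though Hargé's original paper proceeds via a semigroup interpolation, which I will keep in reserve as a backup. First I would reduce to the case $m_0=0$ and $\Sigma=I_d$ through the affine change of variables $z\mapsto \Sigma^{-1/2}(z-m_0)$, which preserves Gaussianity, log-concavity of $f$, and the form of the claimed inequality. After this reduction $\nu$ has density proportional to $e^{-V}$ with $V(x)=\tfrac12|x|^2-\log f(x)+\mathrm{const}$, so $V-\tfrac12|\cdot|^2$ is convex. Caffarelli's contraction theorem then furnishes a Brenier map $T=\nabla\varphi$, with $\varphi$ convex and $0\preceq \nabla^2\varphi \preceq I_d$, pushing $\mu$ forward to $\nu$ and satisfying $|T(x)-T(y)|\leq |x-y|$ in the Euclidean norm.

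Second, I would use the coupling $Y\stackrel{d}{=}T(X)$ and set $S(x):=T(x)-\E T(X)$, so that $Y-\E Y \stackrel{d}{=} S(X)$ with $S$ being $1$-Lipschitz, centred under $\mu$, and still of the form ``centred gradient of a convex function whose Hessian is bounded above by $I_d$''. Because $\E X=0$ after the reduction, the theorem collapses to the clean statement: for $X\sim\mathcal N(0,I_d)$ and any such $S$, $\E g(S(X))\leq \E g(X)$ for every convex $g$. A routine mollification of $f$ allows me to assume throughout that $\varphi$ is smooth and $\nabla^2\varphi$ is strictly positive, which is convenient for the differential computations below.

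The main obstacle is upgrading the pointwise $1$-Lipschitz bound on $T$ to this convex-domination property, since $1$-Lipschitzness alone is not enough to compare expectations of arbitrary convex functionals. The plan is to invoke the Strassen--Kellerer characterisation of the convex order: it suffices to produce a martingale coupling realising $S(X)$ as a conditional expectation of $X$. Writing $T(x)=x-\nabla\psi(x)$ for the convex potential $\psi(x)=\tfrac12|x|^2-\varphi(x)$, one has $S(x)=x-\bigl(\nabla\psi(x)-\E\nabla\psi(X)\bigr)$, and Gaussian integration by parts (Stein's identity) together with the Brascamp--Lieb variance inequality applied to $\nabla\psi$ should identify $\E[X\mid S(X)]$ with $S(X)$ along a one-parameter homotopy $T_s(x)=(1-s)x+sT(x)$ between the identity and $T$; the eigenvalue bound $\nabla^2\psi\succeq 0$ is precisely what keeps this homotopy admissible. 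Verifying the conditional-expectation identity rigorously is the longest and most delicate step of the argument.

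As a safer backup, I would fall back on a semigroup-interpolation argument in the spirit of Hargé's original proof. Let $(P_t)_{t\geq 0}$ denote the Ornstein--Uhlenbeck semigroup associated with $\mu$, set $f_t:=P_tf$ (still log-concave by Prékopa--Leindler, since $P_t$ is a Gaussian convolution after rescaling), and define $\nu_t$ with density $f_t/\int f_t\,d\mu$ relative to $\mu$, so that $\nu_0=\nu$ and $\nu_t\to\mu$ as $t\to\infty$. The plan is to show that
\[
\Phi(t):=\E_{\nu_t}\bigl[g\bigl(X-\E_{\nu_t}X\bigr)\bigr]
\]
is nondecreasing in $t$ by a direct derivative computation that exploits the log-concavity of $f_t$ together with Gaussian integration by parts in a Bakry--Émery fashion; integrating the resulting nonnegative derivative from $0$ to $\infty$ then yields $\Phi(0)\leq \Phi(\infty)$, which is exactly the desired inequality.
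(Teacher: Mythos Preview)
The paper does not prove this statement at all: it is quoted verbatim as Theorem~1.1 of Harg\'e (2004) and used as a black box in the subsequent proposition on moments of $\mu_\beta$. So there is no ``paper's own proof'' to compare against; the relevant benchmark is Harg\'e's original argument.

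Your backup route---the Ornstein--Uhlenbeck semigroup interpolation showing that $t\mapsto \E_{\nu_t}[g(X-\E_{\nu_t}X)]$ is monotone---is exactly Harg\'e's method, and if you carry out the derivative computation with the log-concavity of $P_tf$ and Gaussian integration by parts you will recover his proof. That part of your plan is sound.

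Your primary route, however, has a genuine gap. Caffarelli's contraction theorem does give you a $1$-Lipschitz Brenier map $T$ pushing $\mu=\mathcal N(0,I_d)$ to $\nu$, and you correctly observe that $1$-Lipschitzness alone is \emph{not} enough to conclude convex domination of $T(X)-\E T(X)$ by $X$. But the proposed upgrade---constructing a martingale coupling via Strassen--Kellerer by showing $\E[X\mid S(X)]=S(X)$ along the homotopy $T_s=(1-s)\mathrm{id}+sT$---is not established and is not, as far as I can see, a known or straightforward argument. You yourself flag it as ``the longest and most delicate step'' without indicating how the conditional-expectation identity would actually be verified; invoking ``Stein's identity together with Brascamp--Lieb'' is suggestive but does not constitute a proof, and I do not see how those ingredients assemble into the required martingale property. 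Unless you can produce that verification, the Caffarelli route remains a heuristic, and you should rely on the semigroup argument instead.
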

\newtheorem{Inv mom}[Ass1]{Proposition}
\begin{Inv mom}\label{Inv mom}
Let $p\geq 2$ and let $Y$ be a random variable such that $\mathcal{L}(Y)=\mu_\beta.$ Then
\[ \E|Y|^p\leq   2^{p-1} ((\frac{d}{\beta m})^{\frac{p}{2}} (1+p/d)^{\frac{p}{2}-1} +\left (\frac{2}{m} (u(0) +\frac{d}{\beta}) \right )^{p/2}:= C_{\mu_\beta,p}. \]
\end{Inv mom}

\begin{proof}
Since $e^{-\beta u(x)}=e^{-\beta( u- \frac{m}{2}|x|^2)} e^{-\beta \frac{m}{2}|x|^2}  $ and since the function $u-\frac{m}{2}|x|^2$ is convex (due to strong convexity of $u$) the assumptions of Theorem \ref{harge} are valid for $X$ which has distribution $\mathcal{N}(0,\frac{1}{\beta m}I_d)$, and for $Y$ with density $\mu_\beta$. Since the function $g: x\rightarrow |x|^p$ is convex applying the result of Theorem \ref{harge} leads to
\begin{equation}\label{pol mom}
    \E |Y-\E Y|^p\leq \E |X|^p=(\frac{d}{\beta m})^{\frac{p}{2}} \frac{\Gamma((d+p)/2)}{\Gamma(d/2)(d/2)\frac{p}{2}}\leq (\frac{d}{\beta m})^{\frac{p}{2}} (1+p/d)^{\frac{p}{2}-1}.
\end{equation}
Combining \eqref{pol mom} and the result of Lemma \ref{inv-measure bound} yields
\begin{equation}
    \E|Y|^p\leq 2^{p-1}\left( \E|Y-\E Y|^p + (\E |Y|)^p\right)\leq 2^{p-1} ((\frac{d}{\beta m})^{\frac{p}{2}} (1+p/d)^{\frac{p}{2}-1} +\left (\frac{2}{m} (u(0) +\frac{d}{\beta}) \right )^{p/2},
\end{equation}
which completes the proof.
\end{proof}
\section{A brief introduction to Moreau-Yosida regularization}
It is well-known that in order to construct a Langevin-type algorithm to efficiently sample from a target measure, one requires a contraction property. However, to the best of the authors' knowledge, Lipschitz continuity of the objective function gradient (drift coefficient in the case of the underdamped Langevin SDE) is a requirement in the literature in order to obtain contraction rates in the underdamped case. The purpose of this section, therefore, is to construct a $C^{1,1}$ approximation of $u$ that inherits the convexity properties of $u$, and for which the associated target measure is close to $\pi_\beta$.

Let us present some facts about the Moreau-Yosida regularization which are central to the subsequent analysis.
For the interested reader, key references are \cite{rockafellar2009variational}, \cite{durmus2018efficient}, \cite{lemarechal1997practical}. For an arbitrary lower semicontinuous function $g:\mathbb{R}^d\rightarrow (-\infty, \infty]$ and $\epsilon>0$ the $\epsilon$-Moreau Yosida regularization $g^\epsilon:\mathbb{R}^d\rightarrow (-\infty, \infty]$ is given by
\[g^\epsilon(x):=\inf_{y\in \mathbb{R}^d} \{g(y)+ \frac{1}{2\epsilon}|x-y|^2\} .\]
The first thing to note is that in the case that $g$ is convex
\begin{equation}\label{eq-minimizers}
\begin{aligned}
      \arg\min g^\epsilon&=\arg\min g,\\
      \min g^\epsilon&=\min g.
\end{aligned}
\end{equation}
Furthermore, if one defines the convex conjugate of $g$  as
$$
g^{*}(v):=\sup _{x \in dom(g)}\{\langle v, x\rangle-g(x)\}.
$$
then it can be shown that
\begin{equation}\label{eq-dual}
    g^\epsilon=\left (g^* + \frac{\epsilon}{2}|\cdot|^2\right)^*.
\end{equation}
The approximation $g^\epsilon$ inherits the convexity property of $g$ and is continuously differentiable with $\frac{1}{\epsilon}$ Lipschitz gradient, i.e
\[|\nabla g^\epsilon(x)-\nabla g^\epsilon(y)|\leq \frac{1}{\epsilon}|x-y|, \quad  x,y \in \mathbb{R}^d, \]
and $\nabla g^\epsilon$ is given by
\begin{equation}\label{eq-prox}
\nabla g^\epsilon(x)=\frac{1}{\epsilon}(x-\operatorname{prox}_g^\epsilon(x)),
\end{equation}
where
\[\operatorname{prox}_{{g}}^{\epsilon}(x):=\underset{y \in \mathbb{R}^{d}}{\arg \min }\left\{{g}(y)+\frac{1}{2\epsilon}|x-y|^{2}\right\},\]
is a continuous mapping. In addition, for every $x \in \mathbb{R}^d$, the function $\epsilon \mapsto g_\epsilon(x)$ is decreasing, and one has the bound $g^\epsilon\leq g$ and the convergence result
\[\lim_{\epsilon \rightarrow 0} g^\epsilon(x)=g(x), \quad  x\in \mathbb{R}^d  .\]
Finally $g^\epsilon$ inherits the differentiability and convexity properties of $g$, and if $g\in C^1$ then one has the relation
\begin{equation} \label{gradient connection}
    \nabla g^{\epsilon}(x)=\nabla g (\operatorname{prox}_{{g}}^{\epsilon}(x)).
\end{equation}
In particular, the following result regarding the second order differentiability of $g^\epsilon$ is important for our analysis.
\newtheorem{Second order differentiability}[Ass1]{Theorem}
\begin{Second order differentiability}\label{Second order}[\cite{planiden2019proximal}, Theorem 3.12]
Let $g$ be a proper lower semicontinuous convex function. Then if $g\in \mathcal{C}^2$ one has that $g^\epsilon \in \mathcal{C}^2$.
\end{Second order differentiability}
\newtheorem{strong conv of Yos}[Ass1]{Lemma}
\begin{strong conv of Yos}\label{strong conv}
Let $g$ be a proper lower semicontinuous $m$ strongly convex function. Then, if $\epsilon<\frac{1}{m}$, $g^\epsilon$ is an $\frac{m}{2}$ strongly convex function.
\end{strong conv of Yos}
\begin{proof}
Proposition 12.60 in \cite{rockafellar2009variational} implies that
\begin{equation}\label{eq-variational}
    \nabla (g^\epsilon)^*(x)=\nabla g^*(x) +\epsilon x .
\end{equation}
Furthermore, since $g$ is $m$- strongly convex we have that $\nabla g^*$ is $\frac{1}{m}$-Lipschitz. Hence, by the assumption $\epsilon<\frac{1}{m}$ it follows from \eqref{eq-variational} that $\nabla (g^\epsilon)^*$ is  $\frac{2}{m}$-Lipschtiz. Then utilising the same argument from \cite{rockafellar2009variational}, one obtains that $g^\epsilon$ is $\frac{m}{2}$ strongly convex.
\end{proof}
\subsection{Finding the right approximation for the objective gradient}
Let us now introduce the function $u_{MY,\epsilon}:\mathbb{R}^d\rightarrow \mathbb{R}_+$ given as the Moreau-Yosida regularization of $u$, that is
\begin{equation}
u_{MY,\epsilon}(x):=\inf \{{u}(y) +\frac{1}{2\epsilon}|x-y|^2\},
\end{equation}
and denote by $h_{MY,\epsilon}=\nabla u_{MY,\epsilon}$ its gradient. Let $\lambda>0$ be the stepsize of the proposed algorithm, and let $\epsilon=\epsilon(\lambda)$ be a function of $\lambda$. By Theorem \ref{Second order} the function $u_{MY,\epsilon}$ is twice continuously differentiable, and by Lemma \ref{strong conv} the function $u_{MY,\epsilon}$ is strongly convex with parameter $m$ if $\epsilon<\frac{1}{m}$. Moreover, $h_{MY,\epsilon}$ is $\frac{1}{\epsilon}$ Lipschitz continuous, and as a result
\begin{equation}\label{eq-Hess}
m I_d\leq Hess(u_{MY,\epsilon})\leq \frac{1}{\epsilon} I_d.
\end{equation}
Since $x^*$ is the unique minimizer of $u$ and due to \eqref{eq-minimizers},
\begin{equation}\label{eq-comp0}
   0\leq u(x^*)=u_{MY,\epsilon}(x^*)\leq u_{MY,\epsilon}(x) ,
\end{equation}
for every $x \in \mathbb{R}^d$. Another key property is the inequality
\begin{equation}\label{eq-inequality}
    u_{MY,\epsilon}(x)\leq u(x),
\end{equation}
which immediately implies that
\begin{equation}\label{eq-int ineq}
 {Z^\epsilon_\beta} := \int_{\mathbb{R}^d} e^{-\beta u_{MY,\epsilon}(x)}dx\geq \int_{\mathbb{R}^d} e^{-\beta u(x)}dx:=Z_\beta.
\end{equation}
Finally, one observes that
\begin{equation}\label{eq-dissipative}
    \langle h_{MY,\epsilon}(x),x\rangle \geq \frac{m}{2}|x|^2-u_{MY,\epsilon}(0)\geq \frac{m}{2}|x|^2-u(0).
\end{equation}
\newtheorem{Measures}[Ass1]{Definition}
\begin{Measures}
Let $0<\epsilon<\frac{1}{m}$. We define \[\mu^\epsilon_\beta(x):=\frac{e^{-\beta u_{MY,\epsilon}(x)}}{\int_{\mathbb{R}^d} e^{-\beta u_{MY,\epsilon}(x)}dx},\] and
\[\pi_\beta^\epsilon(x,y) := \frac{e^{-\beta u_{MY,\epsilon}(x)-\frac{\beta}{2}|y|^2}}{\int_{\mathbb{R}^d} e^{-\beta u_{MY,\epsilon}(x)-\frac{\beta}{2}|y|^2}dxdy}.\]
\end{Measures}

Having stated the key properties of $u_{MY,\epsilon}$, let us consider the properties of the respective target measure with respect to the $W_2$ distance.


\newtheorem{lemmanormineq}[Ass1]{Lemma}
\begin{lemmanormineq}\label{lemmanormineq}
Let $\epsilon>0$. Then one obtains\[|\nabla u_{MY,\epsilon}(x)|\leq |\nabla u(x)|,\] for every $x\in\mathbb{R}^d$.
\end{lemmanormineq}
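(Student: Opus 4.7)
The plan is to reduce the claim to a one-line computation built on two ingredients already available in the excerpt: the identity $\nabla u_{MY,\epsilon}(x)=\nabla u(\operatorname{prox}_u^\epsilon(x))$ from \eqref{gradient connection} and the first-order optimality condition for the proximal mapping, which is equivalent to $\nabla u_{MY,\epsilon}(x)=\frac{1}{\epsilon}(x-\operatorname{prox}_u^\epsilon(x))$ from \eqref{eq-prox}. The only structural property of $u$ I will invoke beyond these is the monotonicity of $\nabla u$, which follows from Assumption \ref{A1}.

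First I would fix $x\in\mathbb{R}^d$ and write $p:=\operatorname{prox}_u^\epsilon(x)$. Combining the two displays above yields the key pointwise relation
\[
x-p=\epsilon\,\nabla u(p),\qquad \nabla u_{MY,\epsilon}(x)=\nabla u(p).
\]
Then I would apply the monotonicity inequality
\[
\langle \nabla u(x)-\nabla u(p),\,x-p\rangle \geq 0
\]
(which holds because $u$ is convex; one could even invoke Assumption \ref{A1} to obtain the stronger lower bound $2m|x-p|^2$, but that extra strength is not needed here). Substituting $x-p=\epsilon\nabla u(p)$ and dividing by the positive scalar $\epsilon$ gives
\[
\langle \nabla u(x),\nabla u(p)\rangle \geq |\nabla u(p)|^2.
\]

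Finally I would apply the Cauchy--Schwarz inequality to the left-hand side, obtaining $|\nabla u(p)|^2\leq |\nabla u(x)|\,|\nabla u(p)|$. If $\nabla u(p)=0$ the conclusion is trivial; otherwise I divide through by $|\nabla u(p)|$ to deduce $|\nabla u(p)|\leq|\nabla u(x)|$, and the identity $\nabla u_{MY,\epsilon}(x)=\nabla u(p)$ finishes the proof. There is no real obstacle here beyond correctly piecing together the proximal optimality condition with monotonicity; the result is essentially a folklore property of the Moreau--Yosida envelope, and the proof is a three-line manipulation once the two identities from the preceding section are at hand.
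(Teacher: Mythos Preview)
Your proof is correct and follows essentially the same route as the paper: both combine the proximal identities \eqref{eq-prox} and \eqref{gradient connection} with monotonicity of $\nabla u$ and Cauchy--Schwarz, then divide by the (possibly vanishing) factor $|\nabla u(p)|=\frac{1}{\epsilon}|x-p|$ after handling the trivial case. The only cosmetic difference is that the paper invokes the strong convexity bound $\langle h(x)-h(p),x-p\rangle\geq m|x-p|^2$ and then discards the $m$-term anyway, whereas you correctly observe that mere convexity ($\geq 0$) suffices.
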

\begin{proof}
Recall that by \eqref{gradient connection} and \eqref{eq-prox} one has $h(\operatorname{prox}_u^\epsilon(x))=\frac{1}{\epsilon}(x-\operatorname{prox}_u^\epsilon(x))$, where we have defined $h:=\nabla u.$ When $x=\operatorname{prox}_u^\epsilon(x)$ the result holds trivially. So let us assume $x\not = \operatorname{prox}_u^\epsilon(x)$. First of all by the strong convexity there holds \[\langle h(x)-h(\operatorname{prox}_u^\epsilon(x)),x-\operatorname{prox}_u^\epsilon(x)\rangle \geq m |x-\operatorname{prox}_u^\epsilon(x)|^2,\] which implies that

\[\begin{aligned}|h(x)||x-\operatorname{prox}_u^\epsilon(x)|&\geq \langle h(x), x-\operatorname{prox}_u^\epsilon(x)\rangle \\&\geq m|x-\operatorname{prox}_u^\epsilon(x)|^2 +\langle h(\operatorname{prox}_u^\epsilon(x)),x-\operatorname{prox}_u^\epsilon(x)\rangle\\&=(\frac{1}{\epsilon}+m)|x-\operatorname{prox}_u^\epsilon(x)|^2\\&\geq \frac{1}{\epsilon}|x-\operatorname{prox}_u^\epsilon(x)|^2.\end{aligned}\]
This means that since $x\not=\operatorname{prox}_u^\epsilon(x)$
there holds, due to \eqref{gradient connection},
\[|h_{MY,\epsilon}(x)|=\frac{1}{\epsilon}|x-\operatorname{prox}_u^\epsilon(x)|\leq |h(x)|,\] which completes the proof.
\end{proof}
\newtheorem{gradient dist}[Ass1]{Lemma}
\begin{gradient dist}\label{gradient dist}
Let $0<\epsilon<\frac{1}{m}$. Then, $\forall x \in \mathbb{R}^d,$
\[|h(x)-\he(x)|\leq 2^{2l+2}L(1+|x|+\sqrt{R})^{2l+2}\epsilon\]
where $R=\sqrt{\frac{2u(0)}{m}}).$
\end{gradient dist}
\begin{proof}
Using \eqref{gradient connection}, Assumption \ref{A2} and Lemma \ref{lemmanormineq} one obtains, \begin{equation}\label{eq-grad-dist}
\begin{aligned}
|h(x)-h_{MY,\epsilon}(x)|&\leq |h(x)-h(prox_u^\epsilon(x)|\\&\leq L(1+|x|+|prox_u^\epsilon(x)|)^{l}|x-prox_u^\epsilon(x)|\\&\leq
L(1+|x|+|prox_u^\epsilon(x)|)^{l} |h_{MY,\epsilon}(x)|\epsilon
\\&\leq L(1+|x|+|prox_u^\epsilon(x)|)^{l}  |h(x)|\epsilon\\&\leq L(1+|x|+|prox_u^\epsilon(x)|)^{l}(1+|x|+|x^*|)^l|x-x^*|\epsilon\\&\leq
L(1+|x|+|prox_u^\epsilon(x)|)^{l}(1+|x|+|x^*|)^{l+1}
\epsilon.
\end{aligned}\end{equation}
Since the proximal operator is $1$-Lipschitz and $prox_u^\epsilon(x^*)=x^*$,
we see that \begin{equation}\label{prox-growth}
    |prox_u^\epsilon(x)|\leq |x|^*+|prox_u^\epsilon(x)-prox^{\epsilon}_u(x^*)|\leq |x^*|+|x-x^*|\leq |x|+2\sqrt{R},
\end{equation}
at which point inserting \eqref{prox-growth} into \eqref{eq-grad-dist} yields the result.
\end{proof}

\newtheorem{W2 estimate}[Ass1]{Corollary}
\begin{W2 estimate}\label{W2 estimate}
There exists a constant $\bar{c}>0$ such that \[W_2(\pi_\beta,\pi_\beta^\epsilon)\leq \bar{c} {\epsilon},\] for every $\epsilon>0$.

\end{W2 estimate}

\begin{proof}
Let $x_t$ be the overdamped Langevin SDE given as :
\[dx_t=-h(x_t)dt +\sqrt{2\beta^{-1}}dB_t,\]
and let $y_t$ be given as \[dy_t=-\he(y_t)dt+\sqrt{2\beta^{-1}}dB_t \] for initial conditions $X_0$, $Y_0$ satisfying $\mathcal{L}(X_0)=\mathcal{L}(Y_0)=\mu_\beta$ .
As a result, since $x_t-y_t$ has vanishing diffusion, one can take the time derivative defined a.s ,i.e
\[\begin{aligned}
 \frac{d}{dt}|x_t-y_t|^2 &=-2\langle x_t-y_t,h(x_t)-\he(y_t)\rangle\\&=-2\langle x_t-y_t,h(x_t)-\he(x_t)\rangle -2 \langle x_t-y_t,\he(x_t)-\he(y_t)\rangle \\&\leq
 \frac{m}{2}|x_t-y_t|^2 +\frac{2}{m}|h(x_t)-\he(x_t)|^2 -m |x_t-y_t|^2
 \\& \leq - \frac{m}{2}|x_t-y_t|^2+\frac{2}{m}|h(x_t)-\he(x_t)|^2.
\end{aligned} \]
where the last step was obtained by Young's inequality and the strong convexity of $u_{MY,\epsilon}$.
Since $\mu_\beta$ is the invariant measure for \eqref{eq-grad-dist}, we know that $x_t$ has law equal to $\mu_\beta$ for every $t\geq 0$. So by
\eqref{eq-grad-dist},
\[\E |h(x_t)-h_{MY,\epsilon}(x_t)|^2 \leq \epsilon^2 2^{4l+4}L^2 \E (1+|x_t|+\sqrt{R})^{4l+4}\leq C_{l,L,R} \epsilon^2,\]
where $C_{l,L,R}$ is derived by Proposition \ref{Inv mom}.
As a result,
\[\E \frac{d}{dt}|x_t-y_t|^2\leq -\frac{m}{2}\E |x_t-y_t|^2 +c \epsilon^2, \]
and since the right hand side is finite one can exchange derivative and expectation to obtain
\[\frac{d}{dt} \E |x_t-y_t|^2\leq -\frac{m}{2}\E |x_t-y_t|^2 +c \epsilon^2.\]
Setting $f(t):=\E |x_t-y_t|^2$, by a simple calculations one obtains
\[(e^{\frac{m}{2}t}f(t)-\frac{2c}{m}\epsilon^2 e^{\frac{m}{2}t})'\leq 0, \] which implies by the fundamental theorem of calculus that
\[e^{\frac{m}{2}t}f(t)-\frac{2c}{m}\epsilon^2 e^{\frac{m}{2}t}\leq f(0)-\frac{2c}{m}\epsilon^2=-\frac{2c}{m}\epsilon^2,\]
and thus one concludes that
\begin{equation}\label{eq-xy}
    \E |x_t-y_t|^2\leq \frac{2c}{m}\epsilon^2, \quad  t\geq 0.
\end{equation}
Then as $x_t$ follows the law of the invariant measure, one has the bound
\[W_2(y_t,\mu_\beta)\leq \sqrt{\E |x_t-y_t|^2}\leq \sqrt{\frac{2c}{m}}\epsilon, \]
and as a result,
\[W_2(\mu_\beta,\mu_\beta ^\epsilon)\leq W_2(y_t,\mu_\beta^\epsilon)+ \sqrt{\frac{2c}{m}}\epsilon.\]
Since $y_t$ converges in $W_2$ to its invariant measure, letting $t\rightarrow \infty$ yields the result.
Noticing that $W_2(\pi_\beta,\pi_\beta^\epsilon)\leq W_2(\mu_\beta^\epsilon,\mu_\beta ) $ and setting $\bar{c} = \sqrt{\frac{2c}{m}}$ then completes the proof.
\end{proof}
\section{New Euler-Krylov Polygonal (Tamed) Scheme}\label{sec-tamed}
Our goal is to construct a stable and efficient sampling algorithm with which to obtain approximate samples from $\pi_\beta ^\epsilon$ (and therefore essentially from $\pi_\beta$), given that the log-gradient of the density is of the class described by Assumption \ref{A2}. A natural step would be to use a discretised version of \eqref{eq-underlang} with the Moreau Yosida gradient $h_{MY,\epsilon}$ in place of $\nabla u$. However, such an algorithm would require additional computation in order to calculate the Moreau Yosida gradient at each iteration, dramatically increasing its computational complexity.


Instead we use a new Euler-Krylov polygonal scheme with drift coefficient $\hg$ depending on $\gamma$. This new function has linear growth and additionally satisfies a dissipativity condition that is crucial for proving uniform moment estimates for the algorithm in $\lambda$. In this section we prove these growth and dissipativity properties and additionally demonstrate its convergence to the original gradient.
\newtheorem{tamingdef}[Ass1]{Definition}
\begin{tamingdef}
Let \[f(x)=h(x)-\frac{m}{2}x\]
We define $\fg$ in the following way:
\[\begin{aligned}
\fg(x)&=f(x) \quad &\text{if } & \quad  |f(x)|\leq \sqrt{\gamma}\\
\fg(x)&=\frac{2 f(x)}{1+\gamma^{-\frac{1}{2}}|f(x)|} \quad &\text{if } & \quad |f(x)|>\sqrt{\gamma}.
\end{aligned}\]
Then, \[\hg=\fg+\frac{m}{2}x.\]
\end{tamingdef}
\newtheorem{Propertiesdiss}[Ass1]{Lemma}
\begin{Propertiesdiss}\label{Propertiesdiss}
There holds, \[\langle \hg(x),x\rangle \geq \frac{m}{2}|x|^2-u(0)\]
\end{Propertiesdiss}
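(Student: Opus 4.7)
The plan is to reduce the statement to a dissipativity property of the untamed field $f$, which follows directly from Remark \ref{effect of conv}, and then handle the taming via a case split.

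First I would use the definition $\hg(x) = \fg(x) + \tfrac{m}{2} x$ to rewrite
\[
\langle \hg(x), x\rangle \;=\; \langle \fg(x), x\rangle \,+\, \tfrac{m}{2}|x|^2,
\]
so that the target inequality becomes $\langle \fg(x), x\rangle \geq -u(0)$. Next I would record the key ingredient from \eqref{eq-diss1}: subtracting $\tfrac{m}{2}|x|^2$ from both sides there and using $f(x) = h(x) - \tfrac{m}{2}x$ gives the dissipativity
\[
\langle f(x), x\rangle \;=\; \langle h(x), x\rangle - \tfrac{m}{2}|x|^2 \;\geq\; -u(0), \qquad x \in \Rd.
\]

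Now I would split on the two branches of the definition of $\fg$. In the first branch, $|f(x)| \leq \sqrt{\gamma}$ and $\fg(x) = f(x)$, so the inequality above directly yields $\langle \fg(x), x\rangle \geq -u(0)$. In the second branch, $|f(x)| > \sqrt{\gamma}$, which implies $\gamma^{-1/2}|f(x)| > 1$, and hence the scalar
\[
c(x) \;:=\; \frac{2}{1 + \gamma^{-1/2}|f(x)|}
\]
lies in the open interval $(0,1)$. Since $\fg(x) = c(x) f(x)$, we get $\langle \fg(x), x\rangle = c(x) \langle f(x), x\rangle$. If $\langle f(x), x\rangle \geq 0$, then $\langle \fg(x), x\rangle \geq 0 \geq -u(0)$. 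If $\langle f(x), x\rangle < 0$, then using $c(x) < 1$ together with $u(0) \geq 0$,
\[
\langle \fg(x), x\rangle \;=\; c(x) \langle f(x), x\rangle \;\geq\; -c(x)\,u(0) \;\geq\; -u(0).
\]
Combining the two branches gives $\langle \fg(x), x\rangle \geq -u(0)$ in all cases, and adding $\tfrac{m}{2}|x|^2$ completes the proof.

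There is no real obstacle here; the only subtlety worth being explicit about is the sign bookkeeping in the tamed branch, where one must note that the taming factor $c(x) \in (0,1)$ shrinks a negative lower bound toward zero rather than enlarging it, using in a crucial way that $u \geq 0$ so that $u(0) \geq 0$.
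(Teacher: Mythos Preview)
Your proof is correct and follows essentially the same approach as the paper: both reduce to showing $\langle \fg(x),x\rangle \geq -u(0)$ via the decomposition $\hg=\fg+\tfrac{m}{2}x$ and the dissipativity of $f$ inherited from \eqref{eq-diss1}, exploiting that the taming is a nonnegative scalar multiple of $f$. The only cosmetic difference is that the paper splits first on the sign of $\langle f(x),x\rangle$ while you split first on the taming branch $|f(x)|\lessgtr\sqrt{\gamma}$; the underlying argument is the same.
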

\begin{proof}
The proof begins by writing \[\langle \hg(x),x\rangle=\langle \fg(x),x\rangle +\frac{m}{2}|x|^2. \]
We split the proof in two parts depending on the sign of $\langle f(x),x\rangle$.\\\\
If $\langle f(x),x\rangle \geq 0$ then, it is easy to see that
$\langle \fg(x),x\rangle \geq 0$ so
\begin{equation}\label{eq-tamdiss1}
    \langle \hg(x),x\rangle \geq \frac{m}{2}|x|^2.
\end{equation}\\
Alternatively if $ \langle f(x),x\rangle< 0 $ then,
noticing that \[|\langle \fg(x),x\rangle| \leq |\langle f(x),x\rangle|, \] there follows that
\[
    \langle \fg(x),x\rangle \geq -|\langle f(x),x\rangle|=\langle f(x),x\rangle.\]
This leads to
\begin{equation}\label{eq-tamdiss2}
\begin{aligned}
    \langle \hg(x),x\rangle &\geq \langle f(x),x\rangle +\frac{m}{2}|x|^2\\&=\langle h(x),x\rangle -\frac{m}{2}|x|^2+\frac{m}{2}|x|^2\\&=\langle h(x),x\rangle \\&\geq \frac{m}{2}|x|^2-u(0).
\end{aligned}
\end{equation}
Combining \eqref{eq-tamdiss1} and \eqref{eq-tamdiss2} yields
the result.
\end{proof}
\newtheorem{Lemma growth}[Ass1]{Lemma}
\begin{Lemma growth}
There holds \[|\hg(x)|\leq 2\sqrt{\gamma}+ \frac{m}{2}|x| \]
\end{Lemma growth}
\begin{proof}
If $|f(x)|\leq \sqrt{\gamma}$ then, \begin{equation}\label{eq-growth1}
    |\hg(x)|=|f(x) +\frac{m}{2}x|\leq \sqrt{\gamma}+\frac{m}{2}|x|.
\end{equation}
On the other hand, if $|f(x)|>\sqrt{\gamma}$ then, \begin{equation}\label{eq-growth2}
    |\hg(x)|\leq |\fg(x)| +\frac{m}{2}|x|\leq 2\sqrt{\gamma} +\frac{m}{2}|x|
\end{equation}
Combining \eqref{eq-growth1} and \eqref{eq-growth2} yields the result.
\end{proof}
\newtheorem{Lemma 5.3}[Ass1]{Lemma}
\begin{Lemma 5.3}\label{tamed distance general}
Let $p>0$ and a random variable $X$ with finite $(4p+4)(l+1)$ moments. Then,
\[\E |\hg(X)-h(X)|^2\leq c \gamma^{-p}\]
where $c=2^{2p+2} (L+\frac{m}{2})^{2p+2}\sqrt{\E |X|^{(4p+4)(l+1)}} +2^{2p+2} |h(0)|^{2p+2}$
\end{Lemma 5.3}
\begin{proof}
By using the definition of $\hg$
\[\begin{aligned}
    |\hg(X)-h(X)|^2\\&=|\fg(X)-f(X)|^2\\&= |f(X)|^2 \left | \frac{\gamma^{-\frac{1}{2}}|f(X)|-1}{1+\gamma^{-\frac{1}{2}}|f(X)|} \right| \mathds{1}_{\{|f(X)|\geq \sqrt{\gamma}\}}\\&\leq
    |f(X)|^2  \mathds{1}_{\{|f(X)|\geq \sqrt{\gamma}\}}
\end{aligned} \]
Taking expectations and using Cauchy Swartz inequality one obtains,
\[\E |\hg(X)-h(X)|^2\leq \sqrt{\E |f(X)|^4} \sqrt{P(|f(X)|\geq \sqrt{\gamma})}\] and then, an application of Markov's inequality yields
 \[\E|\hg(X)-h(X)|^2\leq \sqrt{\E|f(X)|^4 }\sqrt{\E |f(X)|^{4p}}\gamma^{-p}=\sqrt{\E |f(X)|^{4p+4}} \gamma^{-p}.\]
 Noticing that \[|f(x)|\leq |h(x)|+\frac{m}{2}|x|\leq |h(0)|+L(1+|x|)^{l+1} +\frac{m}{2}|x|\] yields the result.
\end{proof}
\section{Sampling with tKLMC1 Algorithm}\label{se-tKLMC1}
We are now ready to construct the tKLMC1 algorithm by using the gradient term $h_{tam,\gamma}$ as the drift coefficient.
The new algorithm has the initial condition $(\bar{\theta}_0^\lambda,\bar{V_0}^\lambda)=(\theta_0,V_0)$ and is a type of Euler-scheme given by the recursion
\begin{equation}\label{eq-discrete}
    \begin{array}{l}
\bar{V}_{n+1}^{\lambda}=\bar{V}^\lambda_{n}-\lambda\left[\gamma \bar{V}^\lambda_{n}+h_{tam,\gamma}\left(\bar{\theta}^\lambda_{n}\right)\right]+\sqrt{\frac{2 \gamma \lambda}{\beta}} \xi_{n+1}, \\
\bar{\theta}^\lambda_{n+1}=\bar{\theta}^\lambda_{n}+\lambda \bar{V}^\lambda_{n}, \quad n\geq 0,
\end{array}
\end{equation}
where $\lambda>0$ is the step-size and $\{\xi_n\}_{n\geq1}$ are $d$-dimensional independent standard Gaussian random variables. Moreover, it is assumed that $\theta_0$,$V_0$ and $\xi_n$ are independent.

Let us now introduce additional auxiliary processes that play an important role in our analysis. Consider the scaled process $(\zeta_t^{\lambda,n},Z_t^{\lambda,n}):=(\tilde{\theta}_{\lambda t},\tilde{V}_{\lambda t})$ where $(\tilde{\theta}_t,\tilde{V}_t)$ are given in \eqref{eq-underlang}. Additionally, define
\begin{equation}\label{eq-Langevin}
 \begin{aligned}
\mathrm{d} R_{t}^{\lambda} &=-\lambda\left(\gamma R_{t}^{\lambda}+h_{MY,\epsilon}\left(r_{t}^{\lambda}\right)\right) \mathrm{d} t+\sqrt{2 \gamma \lambda \beta^{-1}} \mathrm{~d} B_{t}^{\lambda}, \\
\mathrm{d} r_{t}^{\lambda} &=\lambda R_{t}^{\lambda} \mathrm{d} t,
\end{aligned}
\end{equation}
where $B_t^\lambda:=\frac{1}{\sqrt{\lambda}}B_{\lambda t}$. The new Brownian motion is adapted to its natural filtration $\mathbb{F}^\lambda:=\{\mathcal{F}^\lambda_t\}_{t\geq 0}$ which is independent of $\sigma(\theta_0,V_0)$. In addition, we define the continuous-time interpolation of the algorithm as
\begin{equation}
    \begin{aligned}
 \mathrm{d} {V}_{t}^{\lambda} &=-\lambda\left(\gamma {V}_{\lfloor t\rfloor}^{\lambda}+h_{tam,\gamma} ({\theta^\lambda}_{\lfloor t\rfloor})\right) \mathrm{d} t+\sqrt{2 \gamma \lambda \beta^{-1}} \mathrm{~d} B_{t}^{\lambda}, \\
 \mathrm{d} {\theta}_{t}^{\lambda} &=\lambda {V}_{\lfloor t\rfloor}^{\lambda} \mathrm{d} t, \quad t \geq 0,
 \end{aligned}
\end{equation}
with initial condition $V_0^\lambda=V_0$ and $\theta^\lambda_0=\theta_0$. Note that $\mathcal{L}(\theta^\lambda_{\lfloor t \rfloor},V^\lambda_{\lfloor t \rfloor})=\mathcal{L}(\bar{\theta}^\lambda_{\lfloor t \rfloor},\bar{V}^\lambda_{\lfloor t \rfloor})$.
Finally, we define the underdamped Langevin process $\left(\widehat{\zeta}_{t}^{s, u, v, \lambda}, \widehat{Z}_{t}^{s, u, v, \lambda}\right)$ for $s \leq t$
$$
\begin{aligned}
&\mathrm{d} \widehat{Z}_{t}^{s, u, v, \lambda}=-\lambda\left(\gamma \widehat{Z}_{t}^{s, u, v, \lambda}+h_{MY,\epsilon}\left(\widehat{\zeta}_{t}^{s, u, v, \lambda}\right)\right) \mathrm{d} t+\sqrt{2 \gamma \lambda \beta^{-1}} \mathrm{~d} B_{t}^{\lambda}, \\
&\mathrm{d} \widehat{\zeta}_{t}^{s, u, v, \lambda}=\lambda \widehat{Z}_{t}^{s, u, v, \lambda} \mathrm{d} t,
\end{aligned}
$$
with initial conditions $\widehat{\theta}_{s}^{s, u, v, \lambda}=u$ and $\widehat{V}_{s}^{s, u, v, \lambda}=v$.
\newtheorem{aux process}[Ass1]{Definition}
\begin{aux process} Fix $n \in \mathbb{N}$ and let $T:=\lfloor 1 / \lambda\rfloor$. Then we define
$$
\zeta_{t}^{\lambda, n}:=\widehat{\zeta}_{t}^{n T, \bar{\theta}^\lambda_{n T}, \bar{V}^\lambda_{n T}, \lambda}, \quad \text { and } \quad Z_{t}^{\lambda, n}:=\widehat{Z}_{t}^{n T, \bar{\theta}^\lambda_{n T} , \bar{V}_{n T} ^\lambda, \lambda},
$$
such that the process $\left(\zeta_{t}^{\lambda, n}, Z_{t}^{\lambda, n}\right)_{t \geq n T}$ is an underdamped Langevin process started at time $n T$ with initial conditions $\left(
\theta^\lambda_{n T},
{V}_{n T}^{\lambda}\right)$.
\end{aux process}
Throughout our analysis we assume that
\begin{equation}\label{eq-restrictions1}
    \gamma_{\min,1}=\max\{ \sqrt{\frac{K+m}{\beta}},K,32,\frac{48(2m+1)^2}{m}\} \quad \text{and} \quad \lambda_{\max,1}=\gamma^{-1}_{\min,1}
\end{equation}
\subsection{Moment bounds (tKLMC1)}
In order to proceed with the convergence properties of the algorithm \eqref{eq-discrete} we  first need some high moment estimates. Proofs are postponed to the Appendix.
\newtheorem{second mom}[Ass1]{Lemma}
\begin{second mom}\label{second mom}
For $\lambda<\gamma^{-1}<\frac{m}{48(2m+1)^2}$ one has
\[\sup_n \E |\bar{\theta}^\lambda_n|^2\leq \bar{C}_2,\]
and \[\sup_n \E |\bar{V}^\lambda_n|^2 \leq \bar{B}_{2},\] where the constants $\bar{C}_2$ $\bar{B}_2$ are independent of $\gamma$, have dependence on the dimension at most $\mathcal{O}(\frac{d}{\beta})$ and are given explicitly in the proof.
\end{second mom}
\begin{proof}
Postponed to the Appendix
\end{proof}
\newtheorem{high mom}[Ass1]{Lemma}
\begin{high mom}\label{high mom}
For $\lambda<\gamma^{-1}<\frac{m}{48(2m+1)^2}$
there holds \[\sup_n \E |\bar{\theta}^\lambda_n|^{2q}\leq \bar{C}_{2q},\]
where $\bar{C}_{2q}=\mathcal{O}((\frac{d}{\beta})^{q})$ and is independent of $\gamma$.
\end{high mom}
\begin{proof}
Postponed to the Appendix.
\end{proof}
We conclude this section by presenting the one-step errors for the continuous interpolation of the algorithm.
\newtheorem{One-step}[Ass1]{Lemma}
\begin{One-step}\label{one-step}
For every $t\geq0$ one has the bound \[\E |\theta^\lambda_t-\theta^\lambda_{\lfloor t \rfloor}|^2 \leq \lambda \bar{B}_2, \] and
\[\E |V^\lambda_{\lfloor t \rfloor}-V_t^\lambda|^2\leq \lambda \gamma C_{1,v}, \]
where $C_{1,v}=\max\{\tilde{C}_2,16\frac{d}{\beta}\}$
where $\bar{C}_2$ and $\bar{B}_2$ are given explicitly in the proof of Lemma $\ref{second mom}.$
\end{One-step}
\begin{proof}
Postponed to the appendix.
\end{proof}
\newtheorem{mom-continuous prelim}[Ass1]{Lemma}
\begin{mom-continuous prelim} [\cite{gao2021global} Lemma 16 (i) ]\label{mom-continuous prelim}
Let $U\in\mathcal{C}_1(\mathbb{R}^d)$ be a function satisfying
\begin{enumerate}
\item $U\geq 0$
    \item $\nabla U$ is Lipschitz
    \item There exist $a,b>0$ such that \[\langle x,\nabla U(x)\rangle \geq a|x|^2-b, \quad  x \in \mathbb{R}^d.\]
    \end{enumerate}
    Furthermore, consider the following underdamped Langevin SDE $\{(X_t,Y_t)\}_{t\geq 0}$
    \[\begin{aligned}
     dY_t&=-\gamma Y_t dt -\lambda \nabla U(X_t) dt +\sqrt{\frac{2 \lambda \gamma}{\beta}}dB_t,
     \\ dX_t&=\lambda Y_t dt,\quad t>0.
    \end{aligned}
    \]
    with initial condition $(X_0,V_0)$ such that \[\E |X_0|^2 +\E |Y_0|^2<\infty. \]
    Then one has the bounds, \[\sup_{t\geq 0}\E |X_t|^2<\infty,\] and  \[\sup_{t\geq 0}\E |Y_t|^2<\infty.\]
\end{mom-continuous prelim}
\newtheorem{mom-continuous}[Ass1]{Corollary}
\begin{mom-continuous}\label{mom-continuous}
Let Assumptions \ref{A1},\ref{A3} hold. Then for $0<\lambda<\lambda_{max}$ one has
\[\sup_n \sup_{t\geq nT} \E |\zeta^{\lambda,n}_t|^2<\infty,\]
and
\[\sup_n \sup_{t\geq nT} \E |Z^{\lambda,n}_t|^2<\infty.\]
\end{mom-continuous}
\begin{proof}
One notices that due to \eqref{eq-comp0}, \eqref{eq-dissipative}, \eqref{eq-Hess} the function $u_{MY,\epsilon}$ satisfies the assumptions of Lemma \ref{mom-continuous prelim}, the process $(\zflt,Z^{\lambda,n}_t)$ is the solution to an underdamped Langevin SDE with initial condition $(\theta^\lambda_{nT}, V^\lambda_{nT})$, and so by Lemma \ref{second mom} there holds that
\[\E|\theta^\lambda_{nT}|^2+\E|V^\lambda_{nT}|^2\leq  \sup_n\E|\bar{\theta}^\lambda_{n}|^2+\sup_n\E|\bar{V}^\lambda_{n}|^2<\infty. \]
Applying the result of Lemma \ref{mom-continuous prelim} concludes the proof.
\end{proof}
\subsection{Convergence of the tKLMC1 algorithm}
Before we proceed with convergence results, we present the fundamental contraction property that is needed for the derivation of the main results.
\newtheorem{Contraction}[Ass1]{Theorem}
\begin{Contraction} [ cf\cite{Dalaylan}, Theorem 1]\label{Contraction}
Let $\epsilon>0$ be such that $u_{MY,\epsilon} \in \mathcal{C}^2$ and $m I_d\leq Hess(u_{MY,\epsilon})\leq K I_d$. Let $\gamma\geq \sqrt{\frac{m+K}{\beta}}$. Then, if $Rr^\lambda_t$ and ${Rr^\lambda}'_t$ are two solutions of the Langevin SDE \eqref{eq-Langevin} with initial condition $Rr_0=(R_0,r_0)$ and $Rr_0'=(R'_0,r'_0)$ respectively, one has the bound
\[W_2(\mathcal{L}(Rr^\lambda_t) ,\mathcal{L}({Rr^\lambda}'_t))\leq \sqrt{2}\exp{(-\frac{\lambda m}{\beta\gamma}t)}W_2(\mathcal{L}(Rr_0),\mathcal{L}({Rr'_0})). \]
\end{Contraction}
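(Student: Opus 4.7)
Since the two copies of \eqref{eq-Langevin} share the same diffusion coefficient, the natural approach is a synchronous coupling together with a carefully chosen quadratic Lyapunov function, as in Dalalyan--Riou-Durand. First I would take a $W_2$-optimal coupling of the initial laws and drive both SDEs with the same Brownian motion $B^\lambda$. Writing $(W_t,U_t):=(r_t^\lambda-{r_t^\lambda}',R_t^\lambda-{R_t^\lambda}')$ the stochastic integrals cancel, so $(W,U)$ solves the pathwise ODE
\[\dot W_t=\lambda U_t,\qquad \dot U_t=-\lambda\gamma U_t-\lambda H_t W_t,\]
with $H_t:=\int_0^1\mathrm{Hess}\,u_{MY,\epsilon}\bigl({r_t^\lambda}'+sW_t\bigr)\,ds$. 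By \eqref{eq-Hess} and Lemma~\ref{strong conv}, $mI_d\preceq H_t\preceq KI_d$ uniformly in $t,\omega$.

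Next I would introduce a twisted quadratic form adapted to the kinetic structure, of the shape
\[\Phi(w,u):=|w|^2+\bigl|w+\tfrac{2}{\gamma}u\bigr|^2,\]
(up to a rescaling that matches the Dalalyan choice). Two things need to be checked: (i) $\Phi$ is equivalent to the Euclidean norm $|w|^2+|u|^2$, with the ratio of the largest to smallest eigenvalue of the associated $2d\times 2d$ matrix at most $2$, which produces exactly the $\sqrt 2$ in the statement when passing to $W_2$; and (ii) along trajectories,
\[\frac{d}{dt}\Phi(W_t,U_t)\leq -\frac{2\lambda m}{\beta\gamma}\,\Phi(W_t,U_t).\]
Computing $\frac{d}{dt}\Phi$ using the ODE for $(W,U)$ yields a quadratic form in $(W_t,U_t)$ whose coefficients involve $\gamma$, the scalar curvature bounds from $H_t$, and the cross-twist $2/\gamma$. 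The condition $\gamma\geq\sqrt{(K+m)/\beta}$ is precisely what makes the associated symmetric matrix inequality hold uniformly in the admissible range $mI\preceq H_t\preceq KI$.

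With these two facts in hand, Grönwall gives $\Phi(W_t,U_t)\leq e^{-2\lambda m t/(\beta\gamma)}\Phi(W_0,U_0)$ pointwise. Taking expectations, applying the norm equivalence $|{\cdot}|^2\leq\Phi(\cdot)\leq 2|{\cdot}|^2$, and infimising over initial couplings produces
\[W_2\bigl(\mathcal L(Rr_t^\lambda),\mathcal L({Rr_t^\lambda}')\bigr)\leq\sqrt 2\,e^{-\lambda m t/(\beta\gamma)}W_2\bigl(\mathcal L(Rr_0),\mathcal L(Rr_0')\bigr),\]
as claimed.

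\textbf{Main obstacle.} The analytical substance is entirely in Step~2: finding the correct coefficients of the twisted form so that the Riccati-type $2\times 2$ matrix inequality closes with the sharp rate $\lambda m/(\beta\gamma)$ while remaining valid for every $H_t\in[mI,KI]$. This is a delicate balance between the damping term $-\lambda\gamma U$, the restoring term $-\lambda H_t W$, and the kinetic coupling $\dot W=\lambda U$, and is exactly where the lower bound on $\gamma$ enters. Since Assumption~\ref{A1}, Lemma~\ref{strong conv} and Theorem~\ref{Second order} supply the curvature bounds required by the Dalalyan framework, the remaining work is to invoke their argument verbatim with $u$ replaced by $u_{MY,\epsilon}$ and constants $(m,K)$ in place of the Lipschitz/strong-convexity moduli in their statement.
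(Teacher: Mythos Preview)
The paper does not supply a proof of this statement at all: Theorem~\ref{Contraction} is stated with the attribution ``cf.\ \cite{Dalaylan}, Theorem~1'' and is used throughout as a black box imported from Dalalyan--Riou-Durand. Your sketch of the synchronous-coupling argument with the twisted quadratic $\Phi(w,u)=|w|^2+|w+\tfrac{2}{\gamma}u|^2$ is precisely the method of the cited reference, so in spirit you are reproducing the proof that the authors chose to omit; your final sentence (``invoke their argument verbatim with $u$ replaced by $u_{MY,\epsilon}$'') is exactly how the paper treats this result.

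One small remark: the appearance of $\beta$ in the contraction rate $\lambda m/(\beta\gamma)$ and in the threshold $\gamma\geq\sqrt{(K+m)/\beta}$ does not come out of the synchronous-coupling ODE you wrote, since the diffusion cancels and $\beta$ is invisible there. It enters instead through the parametrisation mismatch between this paper (invariant measure $\propto\exp(-\beta(\tfrac12|v|^2+u))$) and Dalalyan's (no temperature). A clean way to see it is to rescale $v\mapsto v/\sqrt\beta$ and absorb $\beta$ into an effective potential $\beta u_{MY,\epsilon}$, which has Hessian in $[\beta m,\beta K]$; Dalalyan's condition $\gamma^2\geq m'+K'$ and rate $m'/\gamma$ then become $\gamma^2\beta\geq m+K$ and $m/(\beta\gamma)$ after undoing the scaling. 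You may want to make that step explicit if you write this out in full.
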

The following corollary follows immediately from the previous result.
\newtheorem{contrinv}[Ass1]{Corollary}
\begin{contrinv}\label{contrinv}
Let \ref{A1}-\ref{A3} hold. Let $(r^\lambda_t,R^\lambda_t)$ be the underdamped Langevin SDE as in \eqref{eq-Langevin} with initial condition $(\theta_0,V_0)$. There holds
\[W_2(\mathcal{L}(R_t^\lambda,r_t^\lambda),\pi_\beta^\epsilon)\leq \sqrt{2}\exp{(-\frac{\lambda m}{\beta\gamma}t)} W_2(\mathcal{L}(\theta_0,V_0),\pi_\beta^\epsilon). \]
\end{contrinv}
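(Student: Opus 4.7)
The plan is to derive Corollary \ref{contrinv} directly from Theorem \ref{Contraction} via a synchronous coupling argument, using the fact that $\pi_\beta^\epsilon$ is the invariant measure of the (rescaled) underdamped Langevin SDE \eqref{eq-Langevin} with drift $h_{MY,\epsilon}$.

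First I would observe that the measure $\pi_\beta^\epsilon$ is stationary for \eqref{eq-Langevin}. This follows from the standard fact that the underdamped Langevin diffusion with potential $u_{MY,\epsilon}$ admits $\pi_\beta^\epsilon \propto \exp(-\beta(u_{MY,\epsilon}(x) + \tfrac{1}{2}|v|^2))$ as its unique invariant distribution (the time-rescaling by $\lambda$ does not affect stationarity). Under Assumptions \ref{A1}--\ref{A3} together with the regularity of $u_{MY,\epsilon}$ established earlier (via Theorem \ref{Second order} and Lemma \ref{strong conv}), the conditions of Theorem \ref{Contraction} apply, in particular $mI_d \leq \mathrm{Hess}(u_{MY,\epsilon}) \leq K I_d$ and $\gamma \geq \sqrt{(m+K)/\beta}$.

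Next I would introduce a second process $(r_t^{\lambda,\prime}, R_t^{\lambda,\prime})$ solving the same SDE \eqref{eq-Langevin} driven by the same Brownian motion $B_t^\lambda$, but with initial condition $(\theta_0', V_0')$ distributed according to $\pi_\beta^\epsilon$. By stationarity, $\mathcal{L}(r_t^{\lambda,\prime}, R_t^{\lambda,\prime}) = \pi_\beta^\epsilon$ for every $t \geq 0$. Then I would pick the initial pair $((\theta_0, V_0), (\theta_0', V_0'))$ to be an optimal $W_2$-coupling of $\mathcal{L}(\theta_0, V_0)$ and $\pi_\beta^\epsilon$, so that
\[
\mathbb{E}\bigl[|\theta_0 - \theta_0'|^2 + |V_0 - V_0'|^2\bigr]^{1/2} = W_2(\mathcal{L}(\theta_0, V_0), \pi_\beta^\epsilon).
\]

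Applying Theorem \ref{Contraction} directly to these two solutions then yields
\[
W_2\bigl(\mathcal{L}(r_t^\lambda, R_t^\lambda), \mathcal{L}(r_t^{\lambda,\prime}, R_t^{\lambda,\prime})\bigr) \leq \sqrt{2}\exp\!\Bigl(-\tfrac{\lambda m}{\beta \gamma}t\Bigr) W_2\bigl(\mathcal{L}(\theta_0, V_0), \pi_\beta^\epsilon\bigr),
\]
and since $\mathcal{L}(r_t^{\lambda,\prime}, R_t^{\lambda,\prime}) = \pi_\beta^\epsilon$ the left-hand side equals $W_2(\mathcal{L}(r_t^\lambda, R_t^\lambda), \pi_\beta^\epsilon)$, which is what is claimed.

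There is no serious obstacle here: the only thing to verify carefully is the invariance of $\pi_\beta^\epsilon$ under the dynamics \eqref{eq-Langevin}, which is a standard computation (generator applied to the exponential density vanishes), and the applicability of Theorem \ref{Contraction}, which was already engineered to cover exactly this setting. The moment/integrability conditions needed for $\pi_\beta^\epsilon$ to have finite second moments follow from the strong convexity of $u_{MY,\epsilon}$ (Lemma \ref{strong conv}) via the same Harge-type bound as in Proposition \ref{Inv mom}, so the coupling above is well-defined in $W_2$.
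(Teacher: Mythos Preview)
Your proposal is correct and matches the paper's own treatment: the paper simply states that the corollary ``follows immediately from the previous result'' (Theorem \ref{Contraction}), and your argument---take a second solution started from the invariant law $\pi_\beta^\epsilon$ and apply the contraction bound---is exactly how one unpacks that phrase. A minor remark: since Theorem \ref{Contraction} is already stated at the level of $W_2$ distances between laws, you do not actually need to construct an explicit optimal coupling of the initial conditions; it suffices to choose $(\theta_0',V_0')\sim\pi_\beta^\epsilon$, invoke invariance, and read off the bound.
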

In order to prove the main convergence result, one uses the following splitting:
\begin{align}\label{eq-triangle}
    W_n((\mathcal{L}(\bar{V}_t,{\bar{\theta}_t}),\pi_\beta) \leq W_2(\mathcal{L}&(\bar{V}_t,{\bar{\theta}_t}),\mathcal{L}(Z_t^{\lambda,n},\zeta_t^{\lambda,n}))+W_2(\mathcal{L}(Z_t^{\lambda,n},\zeta_t^{\lambda,n}),\mathcal{L}(R_t^\lambda,r_t^\lambda))\nonumber \\
    &+W_2(\mathcal{L}(R_t^\lambda,r_t^\lambda),\pi_\beta^\epsilon)+W_2(\pi_\beta^\epsilon,\pi_\beta).
\end{align}
In order to bound $W_2(\mathcal{L}(\bar{V}_t,{\bar{\theta}_t}),\mathcal{L}(Z_t^{\lambda,n},\zeta_t^{\lambda,n}))$ the following Lemma is required, which demonstrates the good approximating properties of the tamed scheme. From now on we assume that $\gamma^{-1}\leq \min\{\epsilon,\frac{1}{32},\frac{m}{48(2m+1)^2}\}$ and $\lambda<\gamma^{-1}.$
\newtheorem{tamed distance}[Ass1]{Lemma}
\begin{tamed distance}\label{tamed distance}
Let Assumptions \ref{A1}-\ref{A3} hold and $q>0$.
Then, by Lemma \ref{tamed distance general} where $p=2q+1$ and Lemma \ref{high mom}, there follows
\[\E|h_{tam,\gamma}(\tfls)-h(\tfls)|^2\leq \frac{C_A}{16} \gamma^{-(2q+1)},\]
where $C_A:=\mathcal{O}(2^{(2(l+2)+2q(2l+2))}\bar{C}_{2(l+2)+2q(2l+2)})$.
\end{tamed distance}
\newtheorem{Aux-alg distance}[Ass1]{Proposition}
\begin{Aux-alg distance}\label{Aux-alg distance}
Let Assumptions \ref{A1}-\ref{A3} hold.
Then, for every $n\in \mathbb{N}$ and for $nT\leq t\leq (n+1)T$ one has
\[W_2\left(\mathcal{L}(V_t^\lambda,\theta^\lambda_t),\mathcal{L}(Z_t^{\lambda,n},\zeta_t^{\lambda,n})\right)\leq C \left(\sqrt{\lambda}\sqrt{\gamma}+\sqrt{\frac{\slg}{\gamma}}\right),\]
where $C:=\sqrt{6(1+C_{1,v})}$ where $C_{1,v}$ is given in Lemma \ref{one-step} and $\slg$ is given by \eqref{eq-slg}.
\end{Aux-alg distance}
 \newtheorem{Contr}[Ass1]{Theorem}
 \begin{Contr}\label{Contr}
 Let Assumptions \ref{A1}-\ref{A3} hold and let $(r^\lambda_t,R^\lambda_t)$ be the solution to the underdamped Langevin SDE \eqref{eq-Langevin} with initial condition $(\theta_0,V_0)$. Then for every $n\in \mathbb{N}$, $nT\leq t\leq (n+1)T$ \[W_2(\mathcal{L}(\zeta^{\lambda,n}_t,Z^{\lambda,n}_t),\mathcal{L}(r^{\lambda}_t,R^\lambda_t))\leq C'\left(\sqrt{\lambda}\gamma^\frac{3}{2} +\sqrt{\gamma} \sqrt{\slg}\right)\]
 where $C'$ is given explicitly in the proof and  $\slg$ is given by \eqref{eq-slg}.
 \end{Contr}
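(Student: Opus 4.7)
The plan is to use a telescoping decomposition of the $W_2$ distance into one-step errors, each controlled by Proposition \ref{Aux-alg distance}, and then to sum the resulting series weighted by the exponential decay factor from Theorem \ref{Contraction}. The key preliminary observation is that $(\zeta^{\lambda,0}_t, Z^{\lambda,0}_t)$ is by definition the underdamped Langevin SDE with drift $h_{MY,\epsilon}$ started at time $0$ from $(\bar{\theta}^\lambda_0, \bar{V}^\lambda_0) = (\theta_0, V_0)$, which coincides in law with $(r^\lambda_t, R^\lambda_t)$ for every $t \geq 0$. Hence the triangle inequality yields
\[
W_2(\mathcal{L}(\zeta^{\lambda,n}_t, Z^{\lambda,n}_t), \mathcal{L}(r^\lambda_t, R^\lambda_t)) \leq \sum_{k=1}^{n} W_2\left(\mathcal{L}(\zeta^{\lambda,k}_t, Z^{\lambda,k}_t), \mathcal{L}(\zeta^{\lambda,k-1}_t, Z^{\lambda,k-1}_t)\right).
\]

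For each fixed $k$ and $t \geq kT$, both $(\zeta^{\lambda,k}, Z^{\lambda,k})$ and $(\zeta^{\lambda,k-1}, Z^{\lambda,k-1})$ satisfy the same Langevin SDE from time $kT$ onwards, merely starting from different initial conditions at that moment. Applying Theorem \ref{Contraction} with initial time translated to $kT$ gives
\[
W_2(\mathcal{L}(\zeta^{\lambda,k}_t, Z^{\lambda,k}_t), \mathcal{L}(\zeta^{\lambda,k-1}_t, Z^{\lambda,k-1}_t)) \leq \sqrt{2}\exp\left(-\tfrac{\lambda m(t-kT)}{\beta\gamma}\right)W_2\left(\mathcal{L}(V^\lambda_{kT}, \theta^\lambda_{kT}), \mathcal{L}(Z^{\lambda,k-1}_{kT}, \zeta^{\lambda,k-1}_{kT})\right),
\]
where I use the identity $(\zeta^{\lambda,k}_{kT}, Z^{\lambda,k}_{kT}) = (\theta^\lambda_{kT}, V^\lambda_{kT})$ by construction. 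The surviving $W_2$ on the right, evaluated at the endpoint $t = kT$ of the interval $[(k-1)T, kT]$, is precisely the quantity bounded by Proposition \ref{Aux-alg distance} (with index $n = k-1$), giving $C(\sqrt{\lambda\gamma} + \sqrt{\slg/\gamma})$ uniformly in $k$ thanks to the uniform moment estimates of Lemmas \ref{second mom} and \ref{high mom}.

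It then remains to sum the geometric series. For $t \in [nT, (n+1)T]$ we have $t - kT \geq (n-k)T$, and $\lambda T = \lambda \lfloor 1/\lambda \rfloor \geq 1/2$ once $\lambda \leq 1/2$ (ensured by $\lambda < \gamma^{-1} \leq \lambda_{\max,1}$). Writing $j = n-k$ and using $1 - e^{-x} \geq x/2$ for $x \in (0, \log 2]$ (valid since $\gamma \geq \gamma_{\min,1}$ makes $m/(2\beta\gamma)$ small),
\[
\sum_{k=1}^{n}\exp\left(-\tfrac{\lambda m(t-kT)}{\beta\gamma}\right) \leq \sum_{j=0}^{\infty}\exp\left(-\tfrac{mj}{2\beta\gamma}\right) \leq \frac{1}{1-\exp(-m/(2\beta\gamma))} \leq \frac{4\beta\gamma}{m}.
\]
Multiplying by $\sqrt{2}C(\sqrt{\lambda\gamma} + \sqrt{\slg/\gamma})$ produces a bound of order $\gamma(\sqrt{\lambda\gamma} + \sqrt{\slg/\gamma}) = \sqrt{\lambda}\gamma^{3/2} + \sqrt{\gamma}\sqrt{\slg}$ with the explicit constant $C' = 4\sqrt{2}C\beta/m$, as required.

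The main technical subtlety is ensuring the uniform-in-$k$ applicability of Proposition \ref{Aux-alg distance}, which relies crucially on the uniform moment bounds of Lemmas \ref{second mom} and \ref{high mom}; this is exactly why the parameter restrictions (6.1) are imposed throughout the section. The minor issue that $T = \lfloor 1/\lambda \rfloor$ rather than $1/\lambda$ is harmlessly absorbed by the factor of $1/2$ in the exponential rate, and the restriction $\gamma \geq \gamma_{\min,1}$ guarantees that the elementary inequality used to sum the geometric series is valid.
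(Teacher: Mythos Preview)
Your argument is correct and follows essentially the same route as the paper: a telescoping decomposition $W_2(\mathcal{L}(\zeta^{\lambda,n}_t,Z^{\lambda,n}_t),\mathcal{L}(r^{\lambda}_t,R^{\lambda}_t))\leq\sum_{k=1}^n W_2(\mathcal{L}(\zeta^{\lambda,k}_t,Z^{\lambda,k}_t),\mathcal{L}(\zeta^{\lambda,k-1}_t,Z^{\lambda,k-1}_t))$, the contraction of Theorem~\ref{Contraction} from time $kT$, Proposition~\ref{Aux-alg distance} for each one-step error, and summation of the resulting geometric series. The only cosmetic differences are that the paper uses the inequality $\tfrac{1}{1-e^{-x}}\leq \tfrac{e^{x}}{x}$ instead of your $1-e^{-x}\geq x/2$, and it is slightly less explicit than you are about absorbing the factor $\lambda T=\lambda\lfloor 1/\lambda\rfloor$ into the exponent; your treatment of that point is in fact a little cleaner.
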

\section{Sampling with the tKLMC2 algorithm}\label{se-tKLMC2}
In this section we propose a tamed version of KLMC2, an algorithm which was first developed in \cite{cheng2018underdamped} and analysed in depth in \cite{Dalaylan} and \cite{gao2021global} under the assumption of Lipschitz continuity for the gradient.
The creation of this algorithm is motivated by the fact that  \eqref{eq-underlang}, after applying It\^{o}'s formula to the product $e^{\gamma t}V_t$ and by following standard calculations, can be rewritten as
\begin{equation}\label{eq-underlangtwo}
\begin{aligned}
    \Bar{V}_t&=e^{-\gamma t}V_0 - \int_0^t e^{-\gamma(t-s)}\nabla u(\Bar{\theta}_s)ds+\sqrt{\frac{2\gamma}{\beta}} \int_0^t e^{-\gamma(t-s)} dB_s\\
    \Bar{\theta_t}&=\theta_0+\int_0^t \Bar{V}_s ds.
\end{aligned}
\end{equation}
Since the class of functions satisfying Assumption \ref{A2} allows for gradients with superlinear growth, we shall tame the gradient part of the drift cofficient using the tamed technique developed in Section \ref{sec-tamed}. Our strategy to prove the moment bounds and convegence rates is similar to the one used for the proof of the respective results for the tKLMC1 algorithm in Section \ref{se-tKLMC1}.

The new iterative scheme of tKLMC2 is given by
\begin{equation}\label{eq-tKLMC2}
    \begin{aligned}
    \bar{Y}^\lambda_{n+1}&=  \psi_0(\lambda) \yn-\psi_1(\lambda) \hg(\xn) +\sqrt{2\gamma \beta^{-1}} \Xi_{n+1}\\
    \bar{x}^\lambda _{n+1}&=\xn+\psi_1(\lambda)\yn-\psi_2(\lambda)\hg(\xn)+\sqrt{2\gamma \beta^{-1}} \Xi'_{n+1}
    \end{aligned}
\end{equation}
where $\psi_0(t)=e^{-\gamma t}$ and $\psi_{i+1}=\int_0^t \psi_i(s)ds$
where $\left(\boldsymbol{\Xi}_{k+1}, \boldsymbol{\Xi}_{k+1}^{\prime}\right)$ is a $2 d $-dimensional centered Gaussian vector satisfying the following conditions:
\begin{itemize}
    \item
- $\left(\boldsymbol{\Xi}_{j}, \boldsymbol{\Xi}_{j}^{\prime}\right)^{\prime}$ s are iid and independent of the initial condition $\left({V}_{0}, \theta_{0}\right)$,
\item for any fixed $j$, the random vectors $\left(\left(\boldsymbol{\Xi}_{j}\right)_{1},\left(\boldsymbol{\Xi}_{j}^{\prime}\right)_{1}\right),\left(\left(\boldsymbol{\Xi}_{j}\right)_{2},\left(\boldsymbol{\Xi}_{j}^{\prime}\right)_{2}\right), \ldots,\left(\left(\boldsymbol{\Xi}_{j}\right)_{d},\left(\boldsymbol{\Xi}_{j}^{\prime}\right)_{d}\right)$ are iid with the covariance matrix
$$
\mathbf{C}=\int_{0}^{\lambda}\left[\psi_{0}(t), \psi_{1}(t)\right]^{\top}\left[\psi_{0}(t) , \psi_{1}(t)\right] d t.
$$
\end{itemize}

\noindent At this point and in view of \eqref{eq-underlangtwo}, one claims that the continuous time interpolation of \eqref{eq-tKLMC2} is given by
\begin{equation}\label{eq-contint}
    \begin{aligned}
    \Tilde{Y}_t&=e^{- \gamma(t-n\lambda)}\Tilde{Y}_{n\lambda}-\int_{n\lambda}^t e^{-\gamma(t-s)} \hg(\tilde{x}_{n\lambda}) ds +\sqrt{2\gamma\beta^{-1}}\int_{n\lambda}^t e^{-\gamma(t-s)}dW_{s}
    \\\Tilde{x}_t&=\tilde{x}_{n\lambda} +\int_{n\lambda}^t \tilde{Y}_{s} ds,
    \end{aligned}
\end{equation}
which naturally leads to the following Lemma.

\newtheorem{remarkgridpoints}[Ass1]{Lemma}
\begin{remarkgridpoints}\label{remarkgridpoints}
Let $\lambda >0$, $(\yn,\xn)$ be given by \eqref{eq-tKLMC2} and $(\tilde{Y}_{n\lambda},\tilde{x}_{n\lambda})$ be given by \eqref{eq-contint}. Then,
 \[\mathcal{L}(\yn,\xn)=\mathcal{L}(\tilde{Y}_{n\lambda},\tilde{x}_{n\lambda}), \quad \forall n \in \mathbb{N}.\]
 \end{remarkgridpoints}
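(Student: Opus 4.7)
The plan is to prove the equality of laws by induction on $n$, relying on the fact that the continuous interpolation evaluated at the grid points $n\lambda$ satisfies exactly the same one-step recursion as the tKLMC2 scheme \eqref{eq-tKLMC2}. The base case $n=0$ is immediate since both processes start from $(V_0,\theta_0)$. For the inductive step, assuming the identity at $n$, I will compute $(\tilde{Y}_{(n+1)\lambda}, \tilde{x}_{(n+1)\lambda})$ from \eqref{eq-contint} and show that, conditionally on $(\tilde{Y}_{n\lambda}, \tilde{x}_{n\lambda})$, the pair has the same conditional law as the right-hand side of \eqref{eq-tKLMC2} given $(\bar{Y}_n^\lambda, \bar{x}_n^\lambda)$.

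The deterministic parts of the recursion come from routine integration. Starting from the velocity equation in \eqref{eq-contint}, because $\tilde{x}_{n\lambda}$ (and hence $h_{tam,\gamma}(\tilde{x}_{n\lambda})$) is constant on $[n\lambda,(n+1)\lambda]$, the change of variable $u=(n+1)\lambda - s$ gives $\int_{n\lambda}^{(n+1)\lambda} e^{-\gamma((n+1)\lambda-s)}\,ds = \psi_1(\lambda)$, producing the coefficient $\psi_1(\lambda)$ in front of $h_{tam,\gamma}$ in the velocity update and the factor $\psi_0(\lambda)$ in front of $\tilde{Y}_{n\lambda}$. For the position update I substitute the expression for $\tilde{Y}_s$ into $\int_{n\lambda}^{(n+1)\lambda} \tilde{Y}_s\,ds$ and apply (classical) Fubini to the double integral; exchanging the order of integration yields $\psi_1(\lambda)$ as the coefficient of $\tilde{Y}_{n\lambda}$ and $\psi_2(\lambda) = \int_0^\lambda \psi_1(t)\,dt$ as the coefficient of $h_{tam,\gamma}(\tilde{x}_{n\lambda})$, matching the deterministic part of \eqref{eq-tKLMC2}.

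The main step is identifying the Gaussian noise. Define
\[
N_{n+1} := \sqrt{2\gamma\beta^{-1}} \int_{n\lambda}^{(n+1)\lambda} e^{-\gamma((n+1)\lambda -s)}\,dW_s,
\qquad
N'_{n+1} := \sqrt{2\gamma\beta^{-1}} \int_{n\lambda}^{(n+1)\lambda}\!\!\int_{n\lambda}^{s} e^{-\gamma(s-r)}\,dW_r\,ds.
\]
A stochastic Fubini argument rewrites $N'_{n+1}$ as $\sqrt{2\gamma\beta^{-1}}\int_{n\lambda}^{(n+1)\lambda} \psi_1((n+1)\lambda - r)\,dW_r$, so that $(N_{n+1}, N'_{n+1})$ is a centered Gaussian vector (independent of $\mathcal{F}_{n\lambda}$ and of the past noise) whose $d$ coordinate pairs are i.i.d. with covariance matrix
\[
2\gamma\beta^{-1}\int_{n\lambda}^{(n+1)\lambda} [\psi_0((n+1)\lambda -s),\psi_1((n+1)\lambda -s)]^{\top}[\psi_0((n+1)\lambda -s),\psi_1((n+1)\lambda -s)]\,ds,
\]
which, after the change of variable $t=(n+1)\lambda-s$, equals $2\gamma\beta^{-1}\mathbf{C}$. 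This is exactly the distribution of $\sqrt{2\gamma\beta^{-1}}(\Xi_{n+1},\Xi'_{n+1})$. By the independent-increments property of Brownian motion, $(N_{n+1}, N'_{n+1})$ is independent of $(\tilde{Y}_{n\lambda}, \tilde{x}_{n\lambda})$, which mirrors the independence of $(\Xi_{n+1},\Xi'_{n+1})$ from $(\bar{Y}_n^\lambda,\bar{x}_n^\lambda)$ in the discrete scheme.

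Combining the deterministic identification of the coefficients with the noise computation shows that, given $(\tilde{Y}_{n\lambda}, \tilde{x}_{n\lambda}) \stackrel{d}{=} (\bar{Y}_n^\lambda, \bar{x}_n^\lambda)$, the update produces $(\tilde{Y}_{(n+1)\lambda}, \tilde{x}_{(n+1)\lambda}) \stackrel{d}{=} (\bar{Y}_{n+1}^\lambda, \bar{x}_{n+1}^\lambda)$, closing the induction. The only nontrivial point is the noise identification via stochastic Fubini and the covariance matching; everything else is a direct computation. I expect no real obstacle beyond keeping the bookkeeping between the two noise representations clean.
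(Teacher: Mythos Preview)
Your proposal is correct and follows essentially the same approach as the paper: both arguments reduce the claim to a one-step computation (the paper treats $n=0\to 1$ ``for simplicity'', you frame it as an explicit induction), identify the deterministic coefficients $\psi_0,\psi_1,\psi_2$ by direct integration, and then match the Gaussian noise by rewriting the double stochastic integral as a single Wiener integral and checking the $2\times 2$ covariance block against $\mathbf{C}$. The only cosmetic difference is that the paper converts the double integral via the It\^o product rule applied to $e^{-\gamma t}\int_0^t e^{\gamma s}\,dW_s$, whereas you invoke stochastic Fubini directly; both yield the same integrand $\psi_1((n+1)\lambda-r)$ and hence the same covariance.
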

 
 The details of the proof can be found in the proof section.

\newtheorem{auxp}[Ass1]{Definition}
\begin{auxp}
For every $t\in [n\lambda, (n+1)\lambda]$ we define the auxiliary process such that $(Q_{n \lambda},p_{n \lambda})=(\tilde{Y}_{n\lambda},\tilde{x}_{n\lambda})$
\begin{equation}\label{eq-auxp}
\begin{aligned}
dQ^{\lambda,n}_t&=-\gamma Q^{\lambda,n}_t-\he(p^{\lambda,n}_t)+\sqrt{2\gamma \beta^{-1}} dW_t
\\ dp^{\lambda,n}_t&=Q^{\lambda,n}_t
\end{aligned}
\end{equation}
or alternatively
\[\begin{aligned}
Q^{\lambda,n}_t&=e^{-\gamma (t-j)\lambda)}Q^{\lambda,n}_{ n\lambda}-\int_{n\lambda}^t e^{-\gamma(t-s)}\he(p^{\lambda,n}_s) ds+\sqrt{2\gamma\beta^{-1}}\int_{n\lambda}^t e^{-\gamma(t-s)}dWs
\\p^{\lambda,n}_t&=p^{\lambda,n}_{n\lambda}+\int_{n\lambda}^t Q^{\lambda,n}_s ds .
\end{aligned}
\]
where $W_t$ is the same Brownian motion as in \eqref{eq-contint}.
Throughout our analysis we assume that
\begin{equation}\label{eq-restrictions2}
\gamma_{\min}=\sqrt{\frac{2K}{\beta}} \quad \text{and} \quad    \lambda_{\max}=\frac{1}{C'6}\gamma^{-5} .
\end{equation}
where $C'_6$ is given in the proof section.
\end{auxp}
\subsection{Moment bounds(tKLMC2)}
\newtheorem{mom-KLMC2}[Ass1]{Lemma}
\begin{mom-KLMC2}\label{mom-KLMC2}
Under Assumptions \ref{A1}-\ref{A3}, $\lambda<\lambda_{\max,2}$, $\gamma>\gamma_{\min,2}$ there holds,
\[\E |\xn|^2\leq C_x\]
and \[\E|\yn|^2\leq C_y \gamma^2. \]
where $C_x,C_y$ are given in the proof and depend on the $d,\beta$ (their dependence is at most $\mathcal{O}(\frac{d}{\beta})$) and $\E|V_0|^2+\E |\theta_0|^2$ and are also independent of $\gamma$.
\end{mom-KLMC2}
\newtheorem{HighmomKLMC2}[Ass1]{Lemma}
\begin{HighmomKLMC2}\label{HighmomKLMC2}
Under Assumptions \ref{A1}-\ref{A3}, $\lambda<\lambda_{\max,2}$, $\gamma>\gamma_{\min,2}$ there holds
\[\E|\xn|^{2q}\leq \sqrt{C_{4q}}.\]
\end{HighmomKLMC2}
\subsection{Convergence of the KLMC2 algorithm}
\newtheorem{tKLMC2conv}[Ass1]{Lemma}
\begin{tKLMC2conv}\label{tKLMC2conv}
Let $n\in \mathbb{N}$. Under Assumptions \ref{A1}-\ref{A3}, $\lambda<\lambda_{\max,2}$, $\gamma>\gamma_{\min,2}$ for any $t \in [n\lambda,(n+1)\lambda]$, $n\in \mathbb{N}$ there holds
\[W_2\left(\mathcal{L}(p^{\lambda,n}_t,Q^{\lambda,n}_t),\mathcal{L}(\bar{x}^\lambda_t,\bar{Y}^\lambda_t)\right)\leq 2 R_{\lambda,\gamma}\]
where $R_{\lambda,\gamma}$ is given by \eqref{eq-Rlg}.
\end{tKLMC2conv}
\newtheorem{Lemmacontra}[Ass1]{Lemma}
\begin{Lemmacontra}\label{Lemmacontra}
Let $(X_t,Y_t)$ the kinetic Langevin SDE with initial condition $(\theta_0,V_0).$ Let $n\in \mathbb{N}$. Under Assumptions \ref{A1}-\ref{A3}, $\lambda<\lambda_{\max,2}$, $\gamma>\gamma_{\min,2}$ for any $t \in [n\lambda,(n+1)\lambda]$, there holds,
\[W_2(\mathcal{L}(p_t^{\lambda,n},Q_t^{\lambda,n}),\mathcal{L}(X_t,Y_t))\leq C' \frac{\gamma}{\lambda} \sqrt{R_{\lambda,\gamma}}\]
where $R_{\lambda,\gamma}$ is given by \eqref{eq-Rlg}.
\end{Lemmacontra}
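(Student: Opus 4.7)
The plan is to telescope along the sequence of grid times $j\lambda$ for $j = 0, 1, \ldots, n$, bounding the contribution of each restart via the one-step estimate in Lemma \ref{tKLMC2conv} and transporting it forward with the Dalalyan-type contraction of Theorem \ref{Contraction} applied to the Moreau--Yosida Langevin SDE.

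First, for each $j \in \{0, 1, \ldots, n\}$ I would introduce an auxiliary process $(\check{p}^{\lambda,j}_t, \check{Q}^{\lambda,j}_t)_{t \geq j\lambda}$ that evolves according to the MY Langevin SDE \eqref{eq-auxp} from time $j\lambda$ with initial condition $(\tilde{x}_{j\lambda}, \tilde{Y}_{j\lambda})$, driven by the same Brownian motion as in \eqref{eq-contint}. By construction $(\check{p}^{\lambda,n}, \check{Q}^{\lambda,n}) = (p^{\lambda,n}, Q^{\lambda,n})$, and since $(\tilde{x}_0, \tilde{Y}_0) = (\theta_0, V_0)$, also $(\check{p}^{\lambda,0}, \check{Q}^{\lambda,0}) = (X, Y)$. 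The triangle inequality then yields
\begin{equation*}
W_2\bigl(\mathcal{L}(p^{\lambda,n}_t, Q^{\lambda,n}_t), \mathcal{L}(X_t, Y_t)\bigr) \leq \sum_{j=0}^{n-1} W_2\bigl(\mathcal{L}(\check{p}^{\lambda,j+1}_t, \check{Q}^{\lambda,j+1}_t), \mathcal{L}(\check{p}^{\lambda,j}_t, \check{Q}^{\lambda,j}_t)\bigr).
\end{equation*}

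Second, each summand compares two solutions of the same MY Langevin SDE on $[(j+1)\lambda, t]$ with different initial data at $(j+1)\lambda$. Theorem \ref{Contraction} supplies the factor $\sqrt{2}\, e^{-m(t - (j+1)\lambda)/(\beta\gamma)}$ on the initial displacement, and the displacement itself is precisely the tKLMC2 one-step error controlled by Lemma \ref{tKLMC2conv} in terms of $R_{\lambda,\gamma}$. Summing this geometric series under the standing assumption $\lambda \leq \lambda_{\max,2} = \mathcal{O}(\gamma^{-5})$ -- so that $m\lambda/(\beta\gamma)$ is small and $1/(1 - e^{-m\lambda/(\beta\gamma)}) \lesssim \beta\gamma/(m\lambda)$ -- produces the advertised $\gamma/\lambda$ factor multiplying the per-step contribution.

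The main obstacle is reconciling the power of $R_{\lambda,\gamma}$ that ends up in the final bound. Lemma \ref{tKLMC2conv} supplies $2 R_{\lambda,\gamma}$ for the one-step $W_2$ distance, whereas the claim involves $\sqrt{R_{\lambda,\gamma}}$. Closing this gap requires a finer argument: either establishing the estimate at the level of $W_2^2$ by tracking a synchronous mean-squared coupling across the restarts (then extracting a square root at the end), or applying a Cauchy--Schwarz inequality to the telescoping sum to trade a linear-in-$n$ term for a $\sqrt{n}\sqrt{R_{\lambda,\gamma}}$ term that combines with the contraction factor. Care must also be taken to invoke the moment bounds of Lemmas \ref{mom-KLMC2} and \ref{HighmomKLMC2} uniformly across all $n$ restart points so that Lemma \ref{tKLMC2conv} is applicable at every $j$, and to ensure that a single driving Brownian motion is consistently used for the three processes $(p^{\lambda,n}, Q^{\lambda,n})$, $(\check{p}^{\lambda,j}, \check{Q}^{\lambda,j})$, and $(X, Y)$.
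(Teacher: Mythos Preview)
Your telescoping-plus-contraction outline is exactly the paper's argument: decompose $W_2(\mathcal{L}(p^{\lambda,n}_t,Q^{\lambda,n}_t),\mathcal{L}(X_t,Y_t))$ as a sum over $k=1,\ldots,n$ of differences between successive restarted MY diffusions, apply Theorem~\ref{Contraction} on $[k\lambda,t]$ to each summand, insert the one-step bound from Lemma~\ref{tKLMC2conv}, and sum the geometric series $\sum_k e^{-\frac{m}{\beta\gamma}\lambda(n-k)}\leq \frac{\beta\gamma}{m\lambda}e^{\frac{m}{\beta\gamma}\lambda}$ to produce the $\gamma/\lambda$ factor.

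The obstacle you flag about the power of $R_{\lambda,\gamma}$ is not a genuine gap but an artefact of the way Lemma~\ref{tKLMC2conv} is stated. If you look at its proof, what is actually established is
\[
\E|\tilde{x}_t-p^{\lambda,n}_t|^2\leq \lambda^2 R_{\lambda,\gamma}\,e^{16\lambda^3K^2(t-n\lambda)},\qquad \E|\tilde{Y}_t-Q^{\lambda,n}_t|^2\leq 2R_{\lambda,\gamma},
\]
i.e.\ the \emph{squared} $L^2$ distance is of order $R_{\lambda,\gamma}$, so the $W_2$ one-step error is $\mathcal{O}(\sqrt{R_{\lambda,\gamma}})$ rather than $\mathcal{O}(R_{\lambda,\gamma})$. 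The paper simply plugs in $C\sqrt{R_{\lambda,\gamma}}$ for each summand and carries out the geometric sum; no additional Cauchy--Schwarz or mean-squared coupling argument across restarts is needed.
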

\section{Solving the optimization problem}\label{se-opt}
The main goal of this section is to build a process $x_n$ associated with our algorithm that can be used to minimize the excess risk optimization problem $u(x_n)-u(x^*)$.

To this end, let $R_0=\sqrt{\frac{2u(0)}{m}} + \sqrt{\frac{(u(0)+1)d}{\beta}} +1$, $E=\bar{B}(0,R_0)$ and $E'=\bar{B}(0,R_0+1)$, and let $X_n$, $X$ be random variables satisfying $\mathcal{L}(X_n)=\mathcal{L}(\bar{\theta}^\lambda_n)$, $\mathcal{L}(X)=\mu_\beta$ and  \[\sqrt{\E |X_n-X|^2}=W_2(\mathcal{L}(\bar{\theta}^\lambda_n),\mu_\beta).\]
Then we define $x_n:=\bar{\theta}_n^\lambda\mathds{1}_E(\bar{\theta}_n^\lambda).$
\newtheorem{opt-lemma1}[Ass1]{Lemma}
\begin{opt-lemma1}\label{opt-lemma1}
Let Assumptions \ref{A1}-\ref{A3} hold.
Then,
\[\E u(x_n)-\E u(X\mathds{1}_{E}(X_n))\leq L(1+2R_0)^{l+1}W_2(\mathcal{L}(\bar{\theta}_n^\lambda),\mu_\beta).\]
\end{opt-lemma1}
\newtheorem{opt-lemma2}[Ass1]{Lemma}
\begin{opt-lemma2}\label{opt-lemma2}
Under Assumptions \ref{A1}-\ref{A3} there holds
\[\E u(X\mathds{1}_E(X_n))-u(x^*)\leq C''((E|X|^{2l+4})^{1/2}+1)W_2(\bar{\theta}_n^\lambda,\mu_\beta)+C\E |X-x^*|^2 +2u(0)W_2^2(\bar{\theta}_n^\lambda,\mu_\beta) ,\]
where $E|X|^{2l+4}$ can be controlled using Proposition \ref{inv-measure bound}.
\end{opt-lemma2}
\newtheorem{opt-lemma3}[Ass1]{Lemma}
\begin{opt-lemma3}\label{opt-lemma3}
Under Assumptions \ref{A1},\ref{A2} one has
\[\E |X-x^*|^2\leq \frac{2d}{m\beta}.\]
\end{opt-lemma3}
\newtheorem{opt-solution}[Ass1]{Corollary}
\begin{opt-solution}\label{opt-solution}
Under Assumptions \ref{A1},\ref{A2} there holds
\[\E u\left(\bar{\theta}_n^\lambda\mathds{1}_E(\bar{\theta}_n^\lambda)\right)-u(x^*)\leq C'W_2(\mathcal{L}(\bar{\theta}^\lambda_n,\bar{V}_n^\lambda),\pi_\beta) +2u(0)W_2(\mathcal{L}(\bar{\theta}^\lambda_n,\bar{V}_n^\lambda),\pi_\beta)+\frac{2d}{m\beta}, \]
where the Wasserstein distance can be bounded by Theorem \ref{Final rate}.
\end{opt-solution}

\newpage

\appendix
\section{Proofs of section \ref{se-tKLMC1}}
For the rest of the section
\begin{equation*}
    M_n=\frac{\gamma^2}{4} |\bar{\theta}^\lambda_n+\gamma^{-1}\bar{V}^\lambda_n|^2 +\frac{1}{4} |\bar{V}^\lambda_n|^2-\frac{r\gamma^2}{4} |\bar{\theta}^\lambda_n|^2.
\end{equation*}
\begin{proof}[\textbf{Proof of Lemma} \ref{second mom} ]
Let
\begin{equation*}
    \Delta_n=\bar{\theta}^\lambda_n+\gamma^{-1} \bar{V}^\lambda_n -\frac{\lambda}{\gamma}h_{tam,\gamma}(\bar{\theta}^\lambda_n),
\end{equation*}
and \begin{equation*}
    E_n=\bar{V}^\lambda_n-\lambda \gamma \bar{V}^\lambda_n-\lambda h_{tam,\gamma}(\bar{\theta}^\lambda_n).
\end{equation*}
First observe that
\[|\bar{\theta}^\lambda_{n+1} +\gamma^{-1}\bar{V}^\lambda_{n+1}|^2=|\Delta_n|^2 +2\sqrt{\frac{2\lambda}{\gamma\beta}}\langle \Delta_n,\xi_{n+1}\rangle + \frac{2\lambda}{\gamma \beta}|\xi_{n+1}|^2.\]
Furthermore, there holds
\begin{equation*}
    \begin{aligned}
    |\Delta_n|^2&=|\bar{\theta}^\lambda_n+\gamma^{-1}\bar{V}^\lambda_n|^2 -2\lambda \gamma^{-1} \langle \bar{\theta}^\lambda_n+\gamma^{-1}\bar{V}^\lambda_n,h_{tam,\gamma}(\bar{\theta}^\lambda_n)\rangle + \lambda^2\gamma^{-2}|h_{tam,\gamma}(\bar{\theta}^\lambda_n)|^2
    \\&=|\tn +\gamma^{-1} \vn|^2 -2\lambda\gamma^{-2}\langle \vn,\hg(\tn)\rangle \\&-2\lambda \gamma^{-1}\langle \tn,\hg(\tn)\rangle +\lambda^2 \gamma^{-2} |\hg(\tn)|^2
    \\&\leq |\tn +\gamma^{-1} \vn|^2 +2\lambda\gamma^{-2}| \vn||\hg(\tn)| \\&-2\lambda \gamma^{-1}\langle \tn,\hg(\tn)\rangle +\lambda^2 \gamma^{-2} |\hg(\tn)|^2
    \\&\leq |\tn +\gamma^{-1}\vn|^2 + \lambda \gamma^{-2}\frac{2}{m} |\vn|^2 + \lambda\gamma^{-2} \frac{m}{2} |\hg(\tn)|^2 -2\lambda \gamma^{-1}A|\tn|^2\\&+ 2\lambda \gamma^{-1}B +\lambda^2\gamma^{-2} |\hg(\tn)|^2
    \end{aligned}
    \end{equation*}
    where in the last step we used the dissipativity property of $\hg$ given in Lemma \ref{Propertiesdiss}.
    This leads to
    \begin{equation}\label{eq-mom1}
    \begin{aligned}
    \frac{\gamma^2}{4}    \left(|\bar{\theta}_{n+1}+\gamma^{-1}V_{n+1}|^2-|\tn+\gamma^{-1}\vn|^2\right) &\leq  \lambda \frac{8}{m} |\vn|^2 + \lambda \frac{m}{8} |\hg(\tn)|^2 -\lambda\frac{m}{2} \gamma|\tn|^2\\&+ \lambda \gamma \frac{B}{2} +\lambda^2\frac{1}{4} |\hg(\tn)|^2 \\&+ \gamma^2\sqrt{\frac{\lambda}{2\gamma\beta}}\langle \Delta_n,\xi_{n+1}\rangle + \frac{\lambda\gamma}{2 \beta}|\xi_{n+1}|^2.
    \end{aligned}
    \end{equation}
    \begin{equation}\label{eq-mom2}
     \begin{aligned}
    \frac{1}{4}(|\bar{V}^\lambda_{n+1}|^2-|\bar{V}^\lambda_n|^2)&\leq -\frac{\lambda \gamma }{2}|\bar{V}^\lambda_n|^2 +\frac{\lambda ^2\gamma^2}{4}|\bar{V}^\lambda_n|^2 +\frac{\lambda}{2}|\langle \sqrt{\gamma}\bar{V}^\lambda_n,\sqrt{\gamma^{-1}}h_{tam,\gamma}(\bar{\theta}^\lambda_n)\rangle|+\frac{\lambda^2}{4}|h_{tam,\gamma}(\bar{\theta}^\lambda_n)|^2\\&
   + \sqrt{\frac{\lambda\gamma}{2\beta}}\langle E_n,\xi_{n+1}\rangle +\frac{\lambda \gamma }{2\beta} |\xi_{n+1}|^2
   \\&\leq -\frac{\lambda \gamma }{2}|\bar{V}^\lambda_n|^2 +\frac{\lambda ^2\gamma^2}{4}|\bar{V}^\lambda_n|^2 +\frac{1}{4}\lambda \gamma |\vn|^2 + \frac{1}{4}\lambda \gamma^{-1} |\hg(\tn)|^2   \\&+\frac{\lambda^2}{4}|h_{tam,\gamma}(\bar{\theta}^\lambda_n)|^2
   + \sqrt{\frac{\lambda\gamma}{2\beta}}\langle E_n,\xi_{n+1}\rangle +\frac{\lambda \gamma }{2\beta} |\xi_{n+1}|^2.
   \end{aligned}
   \end{equation}
   Finally,
\[-\frac{\gamma^2 r}{4} ( |\bar{\theta}^\lambda_{n+1}|^2- |\bar{\theta}^\lambda_n|^2)=-\lambda \frac{\gamma^2 r}{2}\langle \bar{\theta}^\lambda_n,\bar{V}^\lambda_n\rangle -\lambda^2 \gamma^2 \frac{r}{4}  |\bar{V}^\lambda_n|^2. \]
Observing that
\[-\frac{\gamma}{2} \langle \bar{\theta}^\lambda_n,\bar{V}^\lambda_n\rangle \leq -M_n +\frac{\gamma^2}{4}  |\bar{\theta}^\lambda_n|^2 +\frac{1}{2}|\bar{V}^\lambda_n|^2,\]
yields
\begin{equation}\label{eq-mom3}
    -\frac{\gamma^2 r}{4} ( |\bar{\theta}^\lambda_{n+1}|^2- |\bar{\theta}^\lambda_n|^2)\leq -\lambda r \gamma M_n + \lambda r\gamma^3 |\bar{\theta}^\lambda_n|^2 + \lambda r \gamma \frac{1}{2}  |\bar{V}^\lambda_n|^2.
\end{equation}
Adding \eqref{eq-mom1}, \eqref{eq-mom2} and \eqref{eq-mom3} yields
\begin{equation}\label{eq-corecontraction}
    \begin{aligned}
    M_{n+1}&\leq (1-\lambda r \gamma)M_n + \lambda \left(\lambda\gamma^2\frac{1}{4} +\frac{8}{m}+\frac{1}{4}+\frac{r}{2}\gamma -\frac{\gamma}{2}\right) |\vn|^2\\& + \lambda \left( -\frac{m\gamma}{4} +r\gamma^3\right)|\tn|^2\\&+
    \lambda \left( \frac{\lambda}{2} +\frac{m}{8}+\frac{\lambda}{2}\right)|\hg(\tn)|^2\\&+\gamma^2\sqrt{\frac{\lambda}{2\gamma\beta}}\langle \Delta_n,\xi_{n+1}\rangle +\sqrt{\frac{\lambda\gamma}{2\beta}}\langle E_n,\xi_{n+1}\rangle +3\frac{\lambda \gamma }{\beta} |\xi_{n+1}|^2 +\lambda \gamma \frac{B}{2}
    \\&\leq (1-\lambda r\gamma) M_n + \lambda \left(\lambda\gamma^2\frac{1}{4} +\frac{8}{m}+\frac{1}{4}+\frac{r}{2}\gamma -\frac{\gamma}{2}\right) |\vn|^2
    \\& +\lambda \left( -\frac{m\gamma}{4}+r\gamma^3 +2\lambda \frac{m^2}{4} +\frac{m^3}{8}\right)|\tn|^2
    \\&+ \gamma^2\sqrt{\frac{\lambda}{2\gamma\beta}}\langle \Delta_n,\xi_{n+1}\rangle +\sqrt{\frac{\lambda\gamma}{2\beta}}\langle E_n,\xi_{n+1}\rangle +3\frac{\lambda \gamma }{\beta} |\xi_{n+1}|^2 +\lambda \gamma \frac{B}{2} +4\lambda^2 \gamma +\frac{m}{2}\lambda \gamma
    \\&\leq (1-\lambda \gamma r) M_n + K_n.
\end{aligned}
\end{equation}
where \begin{equation}\label{eq-Kn}
    K_n= \gamma^2\sqrt{\frac{\lambda}{2\gamma\beta}}\langle \Delta_n,\xi_{n+1}\rangle +\sqrt{\frac{\lambda\gamma}{2\beta}}\langle E_n,\xi_{n+1}\rangle +3\frac{\lambda \gamma }{\beta} |\xi_{n+1}|^2 +\lambda \gamma \frac{B}{2} +4\lambda^2 \gamma +\frac{m}{2}\lambda \gamma
    \end{equation}
    Setting $r=\frac{m}{8}\gamma^{-2}$ for $\gamma> \frac{8}{m}+\frac{1}{4}+\frac{m}{8}\gamma^{-1}+\lambda \gamma^2$ and $\gamma>\frac{8}{m}(2\lambda\frac{m^2}{4} +\frac{m^3}{16})$ one obtains
    \begin{equation}
        \E [M_{n+1}|M_n]\leq (1-\lambda \frac{m}{8}\gamma^{-1}) M_n +\lambda \gamma (\frac{3}{\beta}d +4\lambda +\frac{u(0)}{2}+\frac{m}{2})
    \end{equation}
    As a result,
    \[\E M_{n+1}\leq (1-\lambda \gamma^{-1}\frac{m}{8})^n \E M(0) +\gamma^2 \frac{8}{m}\left(\frac{3}{\beta}d +4 +\frac{u(0)}{2}+\frac{m}{2}\right). \]
    Since \begin{equation}\label{eq-inequal}
    M_n\geq \max \left\{\frac{1}{8}(1-2 r)  \gamma^{2}|\bar{\theta}^\lambda_n|^{2}, \frac{1}{4}(1-2 r)|\bar{V}^\lambda_n|^{2}\right\},
\end{equation}
and \[\E M(0)=\mathcal{O}(\gamma^2),\] then,
\begin{equation}\label{eq-bound theta}
 \sup_n \E |\bar{\theta}^\lambda_n|^2\leq \bar{C}_2,
\end{equation}
where $\bar{C}_2=4\E |\theta_0|^2 + 6\E |V_0|^2 +\frac{64}{m}\left(\frac{3}{\beta}d +4 +\frac{u(0)}{2}+\frac{m}{2}\right). $
We conclude the proof by finding a better bound for the moments of $\bar{V}^\lambda_n$. There holds
\[\begin{aligned}
 \E |\bar{V}^\lambda_{n+1}|^2 &\leq (1-\lambda \gamma)\E |\bar{V}^\lambda_n|^2 -2(1-\lambda\gamma)\lambda\E  \langle \bar{V}^\lambda_n,h_{tam,\gamma}(\bar{\theta}^\lambda_n)\rangle + \lambda^2 \E |h_{tam,\gamma}(\bar{\theta}^\lambda_n)|^2 + \frac{2\lambda \gamma}{\beta }d\\&\leq (1-\lambda \gamma)\E |\bar{V}^\lambda_n|^2 + \frac{1}{2}\lambda \gamma \E |\bar{V}^\lambda_n|^2 + 2\lambda \gamma^{-1} \E |h_{tam,\gamma}(\bar{\theta}^\lambda_n)|^2 +\lambda^2 \E |h_{tam,\gamma}(\bar{\theta}^\lambda_n)|^2 +\frac{2\lambda\gamma}{\beta }d\\&
 \leq (1-\frac{\lambda\gamma}{2})\E |\bar{V}^\lambda_n|^2 +
 4\lambda \frac{m^2}{4} \bar{C}_2  +8 +\lambda^2 (m^2 +4\gamma \bar{C}_2)+\frac{2\lambda\gamma}{\beta }d .
\end{aligned}\]
where the last step is derived from the growth of $\hg$ and \eqref{eq-bound theta}.
After $n$ iterations one obtains,
\begin{equation}
    \E |\bar{V}^\lambda_{n+1}|^2 \leq (1-\lambda \frac{\gamma}{2})^n\E |V_0|^2 + 2m^2\bar{C}_2 +16 +4m^2 +8\bar{C}_2 +\frac{4}{\beta}d.
\end{equation}
so one concludes that
\[\sup_n \E |\vn|^2\leq \bar{B}_2\]
where $\bar{B}_2=\E |V_0|^2 + 2m^2\bar{C}_2 +16 +4m^2 +8\bar{C}_2 +\frac{4}{\beta}d.$
\end{proof}

\begin{proof}[\textbf{Proof of Lemma} \ref{high mom}]
Let \begin{equation*}
    f_n=\gamma^2\sqrt{\frac{\lambda}{2\gamma\beta}}\langle \Delta_n,\xi_{n+1}\rangle +\sqrt{\frac{\lambda\gamma}{2\beta}}\langle E_n,\xi_{n+1}\rangle,
\end{equation*}
and set $g_n:=K_n-f_n$ where $K_n$ is given by \eqref{eq-Kn} and $s=1-\frac{\lambda m}{8\gamma}$. We also define $\E_n(\cdot)=\E [ \cdot|M_n]$. Taking conditional expectations in \eqref{eq-corecontraction} yields
\begin{equation}\label{eq-base}
    \begin{aligned}
    \E_n (M_{n+1}^{2q})&\leq |s M_n|^{2q} +2q |sM_n|^{2(q-1)}\E_n ( s M_nK_n )+
    \sum_{i=2}^{2q} C^i_{2q}\E_n [|s\bar{V}^\lambda_n|^{2q-i}|K_n|^i]
    \\&\leq |s M_n|^{2q} +2q |sM_n|^{2q-1}  \E g_n+
   \ \sum_{i=0}^{2 q-2}\left(\begin{array}{c}
2 q \\
i+2
\end{array}\right)\E_n\left [|sM_n|^{2q-2-i}|K_n|^l|K_n|^2\right]\\&=
     |s M_n|^{2q} +2q |sM_n|^{2q-1}  \lambda \gamma C_2+
    (\begin{array}{c}
2 q \\
2
\end{array}) \sum_{i=0}^{2 q-2}(\begin{array}{c}
2 q-2 \\
i
\end{array}) C^i_{2q-2}\E_n\left [|sM_n|^{2q-2-i}|K_n|^{l+2}\right]\\&\leq  |s M_n|^{2q} +2q |sM_n|^{2q-1}  \lambda \gamma C_2+q(2q-1)\E_n\left[(|sM_n|+|K_n|)^{2q-2}|K_n|^2\right]
    \\&\leq  s |M_n|^{2q} +2q |M_n|^{2q-1}  \lambda \gamma C_2\\&+q(2q-1)2^{2q-3}|M_n|^{2q-2}\E_n |K_n|^2+q(2q-1)2^{2q-3}\E_n |K_n|^{2q}.
    \end{aligned}
\end{equation}
We proceed by bounding the moments of $K_n$. Firstly we calculate that
\begin{equation*}
    \begin{aligned}
    |f_n|^2&\leq 2\gamma^3 \beta^{-1}\lambda  |\Delta_n|^2|\xi_{n+1}|^2 +\lambda \gamma \beta^{-1}|E_n|^2|\xi_{n+1}|^2\\&\leq
    2\gamma^3 \beta^{-1}\lambda |\xi_{n+1}|^2|\bar{\theta}^\lambda_n+\gamma^{-1}\bar{V}^\lambda_n -\lambda\gamma^{-1}h_{tam,\gamma}(\bar{\theta}^\lambda_n)|^2)
    \\&+ \lambda \gamma \beta^{-1}|\xi_{n+1}|^2|(1-\lambda\gamma)\bar{V}^\lambda_n-\lambda h_{tam,\gamma}(\bar{\theta}^\lambda_n)|^2
    \\&\leq 6\gamma^3 \beta^{-1}\lambda |\xi_{n+1}|^2
    \left(|\bar{\theta}^\lambda_n|^2+\gamma^{-2}|\bar{V}^\lambda_n|^2+\frac{1}{2}\lambda^2\gamma^{-1}(m^2|\bar{\theta}^\lambda_n|^2+4\gamma)\right)\\&+
    \lambda \gamma \beta^{-1}|\xi_{n+1}|^2\left( (1-\lambda\gamma)^2|\bar{V}^\lambda_n|^2 +\frac{1}{2}\lambda^2  (m^2|\bar{\theta}^\lambda_n|^2+4\gamma)\right)
    \\&\leq C_3 \beta^{-1}\gamma ( M_n + 1)\lambda |\xi_{n+1}|^2,
    \end{aligned}
\end{equation*}
ince \[|K_n|^2\leq 2 |f_n|^2 +2|g_n|^2,\] then,
\begin{equation*}
    \E_n |K_n|^2\leq 2 C_3d\beta^{-1} \lambda \gamma M_n + \lambda \gamma C_4,
\end{equation*}
where $C_4=6(C'_0+\frac{d}{\beta})$ and
\begin{equation*}
    \begin{aligned}
\E_n |K_n|^{2q}&\leq 2^{2q-1}\E_n |f_n|^{2q} +2^{2q-1} \E_n |g_n|^{2q}\\&\leq 2^{2q-1} C_3^q \beta^{-q} \lambda ^q \gamma^q |M_n|^q \E |\xi_{n+1}|^{2q} +\lambda^{2q} \gamma^{2q} C_5 \E |\xi_{n+1}|^{4q} \\&\leq \lambda \gamma C_6 (|M_n|^q+1),
\end{aligned}
\end{equation*}
for $C_6= C_3^q 2^{2q}(\frac{d}{\beta})^q$. Substituting the moments of $K_n$ into \eqref{eq-base} one obtains
\[\begin{aligned}
\E_n |M_{n+1}|^{2q}&\leq(1- \frac\lambda \gamma \frac{m}{32})|M_n|^{2q} + \left(2q \lambda \gamma( C_2+C_3d) |M_n|^{2q-1}-\frac\lambda \gamma \frac{m}{32}|M_n|^{2q}\right)\\&+\left(q(2q-1)2^{2q-2} \lambda \gamma C_4 |M_n|^{2q-2}-\frac\lambda \gamma \frac{m}{32}|M_n|^{2q}\right)
\\&+\left(q(2q-1)2^{2q-3}\lambda \gamma C_6 |M_n|^q -\frac{\lambda} {\gamma} \frac{m}{3}|M_n|^{2q}\right)
+\lambda \gamma C_6q(2q-1)2^{2q-3}.
\end{aligned}\]
As a result there exists $N$ independent of $\gamma$ given by \begin{equation}
\begin{aligned}
  N&=\left(\frac{32}{m}\right)^{2q-1}(2q(C_2+C_3d))^{2q}+((\frac{32}{m})^{q-1}q(2q-1)2^{2q-2}C_4)^{q}+(\frac{32}{m})^q(q(2q-1)2^{2q-3}C_6)^{q+1} \\&+C_6q(2q-1)2^{2q-3},
\end{aligned}
\end{equation}
such that
\begin{equation*}
    \E_n |M_{n+1}|^{2q}\leq (1-\frac{\lambda m}{32 }\gamma^{-1}) |M_n|^{2q} + N \lambda \gamma^{4q-1},
\end{equation*}
which implies
\[\sup_n \E  |M_n|^{2q}\leq \E |M_0|^{2q} + \frac{32 N}{m} \gamma^{4q}.\]
Applying \eqref{eq-inequal} one concludes that
\[\E |\bar{\theta}^\lambda_n|^{4q}\leq 8^{2q} \frac{1}{\gamma^{4q}}(\frac{1}{1-2r})^{2q} \E |M_n|^{2q}\leq \bar{C}_{4q},\]
so that \[\E|\bar{\theta}^\lambda_n|^{2q}\leq \sqrt{\bar{C}_{4q}}.\]
\end{proof}
\begin{proof}[\textbf{Proof of Lemma} \ref{one-step}]
Since for every $t\geq 0$
\[V^\lambda_t =V^\lambda_{\lfloor t \rfloor}-\lambda \gamma \int_{\lfloor t\rfloor} ^t V^\lambda_{\lfloor s\rfloor}ds-\lambda \int_{\lfloor t\rfloor}^t h_{tam,\gamma}(\theta^\lambda_{\lfloor s \rfloor})ds+ \sqrt{2\gamma\lambda \beta^{-1}}(B^\lambda_t-B^\lambda_{\lfloor t \rfloor}).\]
one has that
\begin{equation}\label{eq-onestep}
    \begin{aligned}
     \E |V_t^\lambda-V_{\lfloor t \rfloor}|^2&\leq 8\lambda^2 \gamma^2 \E |\int_{\lfloor t \rfloor}^t \Vfls ds| ^2 +8\lambda^2 \E |\int_{\lfloor t\rfloor}^t h_{tam,\gamma}(\theta^\lambda_{\lfloor s \rfloor})ds|^2 + 16\frac{\lambda \gamma}{\beta} \E |B_{\lfloor t \rfloor}-B_t|^2\\&\leq 8\lambda^2 \gamma^2 \int_{\lfloor t \rfloor}^t \E |\Vfls|^2 ds +8\lambda^2 \int_{\lfloor t \rfloor}^t \E \big|h_{tam,\gamma}(\tfls)\big | ^2 ds +16\frac{\lambda \gamma}{\beta} d\\&\leq
     8\lambda^2 \gamma^2 \int_{\lfloor t \rfloor}^t \E |\Vfls|^2 ds +8\lambda^2 (A \int_{\lfloor t \rfloor}^t \E |\tfls|^2ds +4\gamma ) +16\frac{\lambda\gamma d}{\beta}.
    \end{aligned}
\end{equation}
Then, noting that the law of the continuous interpolation agrees with the algorithm at grid points, the moment bounds of the previous section yield \[\E |V^\lambda_{\lfloor s \rfloor}|^2\leq \bar{B}_2,\] and \[\E |\tfls|^2\leq \bar{C}_2,\] so
\eqref{eq-onestep} yields
\[ \E |V_t^\lambda-V^\lambda_{\lfloor t \rfloor}|^2\leq C_{1,v}\lambda \gamma.\]
In addition, it easy to see that
\[\theta^\lambda_t =\theta^\lambda _{\lfloor t \rfloor}+ \lambda \int_{\lfloor t\rfloor}^tV^\lambda_{\lfloor s \rfloor}ds, \] so that
\[\E |\theta^\lambda_t-\theta^\lambda_{\lfloor t \rfloor}|^2\leq  \lambda  \bar{B}_2.\]
\end{proof}

\section{Proofs of convergence results}
\begin{proof}[\textbf{Proof of Theorem} \ref{Aux-alg distance}]
Firstly one calculates
\begin{equation}
    \begin{aligned}
   \frac{d}{ds} \E |V_t^\lambda-Z_t^{\lambda,n}|^2&=-2 \lambda \gamma \E   \langle V_t^\lambda-Z_t^{\lambda,n},\Vflt-Z_t^{\lambda,n}\rangle  \\&-2\lambda  \E \langle V_t^\lambda-Z_t^{\lambda,n},h_{tam,\gamma} (\tflt)-h_{MY,\epsilon}(\zeta_t^{\lambda,n})\rangle
    \\&=-2 \lambda \gamma   \E\langle V_t^\lambda-Z_t^{\lambda,n},\Vflt-V_t^\lambda\rangle  -2\lambda  \gamma  \E |V_t^\lambda-Z_t^{\lambda,n}|^2 \\&-2\lambda  \E \langle V_t^\lambda-Z_t^{\lambda,n},h_{tam,\gamma} (\tflt)-h_{MY,\epsilon}(\zeta_t^{\lambda,n})\rangle
    \\&\leq\lambda\frac{\gamma}{2}   \E |V^\lambda_t-Z^{\lambda,n}_t|^2 +  2\lambda\gamma   \E |\Vflt-V_t^\lambda|^2 -2\lambda\gamma   \E |V_t^\lambda-Z_t^{\lambda,n}|^2 \\&+\lambda\frac{\gamma}{2}   \E |V_t^\lambda-Z_t^{\lambda,n}|^2 +2\lambda \gamma^{-1}  \E |h_{tam,\gamma}(\tflt)-h_{MY,\epsilon}(\zeta_t^{\lambda,n})|^2  \\&\leq -\lambda\gamma   \E |V_t^\lambda-Z_t^{\lambda,n}|^2 +2\lambda\gamma   \E |\Vflt-V_t^\lambda|^2 \\&+2\lambda \gamma^{-1}  \E |h_{tam,\gamma}(\tflt)-h_{MY,\epsilon}(\zeta_t^{\lambda,n})|^2
    \\&\leq 2\lambda \gamma^{-1}  \E |h_{tam,\gamma}(\tflt)-h_{MY,\epsilon}(\zeta_t^{\lambda,n})|^2 +2\lambda\gamma^2C_{1,v}-\lambda\gamma   \E |V_t^\lambda-Z_t^{\lambda,n}|^2\\&\leq
  \lambda( \underbrace{ 16 \gamma^{-1}  \E |h_{tam,\gamma}(\tflt)-h(\tflt)|^2}_{A_t}\\&
    +\underbrace{16 \gamma^{-1}   \E |h(\tflt)-h_{MY,\epsilon}(\tflt)|^2}_{B'_t}
    \\&\underbrace{ +16 \gamma^{-1}   \E |h_{MY,\epsilon}(\tflt)-h_{MY,\epsilon}(\theta^\lambda_t)|^2}_{C_t}
    \\&+\underbrace{16 \gamma^{-1}   \E |h_{MY,\epsilon}(\theta^\lambda_t)-h_{MY,\epsilon}(\zeta_t^{\lambda,n})|^2}_{D_t}
    \\&+2 \lambda\gamma^2 C_{1,v})-\lambda\gamma \E  |V_t^\lambda-Z_t^{\lambda,n}|^2
    \\&\leq \lambda\left( A_t+B'_t+C_t+D_t+2\lambda\gamma^2C_{1,v}\right)-\lambda\gamma   \E |V_t^\lambda-Z_t^{\lambda,n}|^2 .
    \end{aligned}
\end{equation}
Then using Lemma \ref{tamed distance} it is clear that
 \begin{equation}\label{eq-At}
   A_t=C_A\gamma^{-(2q+1)} .
\end{equation}
In addition, since $h_{MY,\epsilon}$ is $K\leq \frac{1}{\epsilon}$-Lipschitz and by Lemma \ref{one-step} it holds that \begin{equation}\label{eq-Ct}
    C_t\leq \gamma^{-1}K^2\lambda \bar{B}_2,
\end{equation}
and
\begin{equation}\label{eq-Dt}
    D_t\leq 16\lambda \gamma^{-1}K^2   \E |\theta^\lambda_t-\zeta_t^{\lambda,n}|^2\leq \frac{1}{4}\lambda   \gamma \E |\theta^\lambda_t-\zeta_t^{\lambda,n}|^2.
\end{equation}
Additionally, by Lemma \ref{gradient dist} one has that
 \[\begin{aligned}
|h(x)-h_{MY,\epsilon}(x)|^2&\leq  2^{4l+4}L^2(1+2|x|+\sqrt{R})^{4l+4} \epsilon^2,
\end{aligned}\]
and as a result, \begin{equation}\label{eq-Bt}
    B'_t\leq C_{B'} \gamma^{-1}\epsilon^2,
\end{equation}
where $C_{B'}=2^{8l+8}L^2 \left((1+\sqrt{R})^{4l+4} +\bar{C}_{4l+4}\right)$. Now let \begin{equation}\label{eq-slg}
    \slg=C_A\gamma^{-(2q+1)} +\lambda \gamma^{-1}K^2 \bar{B}_2+C_{B'}\gamma^{-1}\epsilon^2+C_{1,v}\lambda \gamma^2,
\end{equation}
so that bringing \eqref{eq-At}, \eqref{eq-Bt}, \eqref{eq-Ct}, \eqref{eq-Dt} together yields
\begin{equation}\label{eq-basic}
   \frac{d}{dt} \E |V_t^\lambda-Z_t^{\lambda,n}|^2\leq -\lambda\gamma  \E |V_t^\lambda-Z_t^{\lambda,n}|^2 +\lambda \slg+\frac{1}{4}\lambda \gamma   \E |\theta^\lambda_t-\zeta_t^{\lambda,n}|^2.
\end{equation}
Furthermore, by Lemma \ref{mom-continuous} \[\sup_{t\geq nT} \E|\zeta_t^{\lambda,n}|^2<\infty,\] and so by the bound
\[\E |\theta^\lambda_t-\zeta_t^{\lambda,n}|^2\leq 4 \E |\zeta_t^{\lambda,n}|^2 +4\E |\theta^\lambda_{\lfloor t\rfloor}|^2 +4\E |\theta_t^\lambda-\theta^\lambda_{\lfloor t \rfloor}|^2,\] one has,
\[\sup_{nT\leq t \leq {(n+1)T}} \E  |\theta^\lambda_t-\zeta_t^{\lambda,n}|^2<\infty.\]
Then one can define $J:=\sup_{nT\leq t \leq {(n+1)T}} \E  |\theta^\lambda_t-\zeta_t^{\lambda,n}|^2$
so that \eqref{eq-basic} becomes
\[\frac{d}{dt} \E |V_t^\lambda-Z_t^{\lambda,n}|^2\leq -\lambda\gamma  \E |V_t^\lambda-Z_t^{\lambda,n}|^2 +\lambda \slg+\lambda\gamma\frac{J}{4},\]
which implies
\[\left(e^{\lambda\gamma t}\E |V_t^\lambda-Z_t^{\lambda,n}|^2 -e^{\lambda\gamma t}(\frac{\slg}{\gamma}+\frac{J}{4 }\right)'\leq 0,\]
so by the fundamental theorem of calculus \[e^{\lambda\gamma t}\E |V_t^\lambda-Z_t^{\lambda,n}|^2 -e^{\lambda\gamma t}(\frac{\slg}{\gamma}+\frac{J}{4 })\leq -e^{\lambda \gamma nT}(\frac{\slg}{\gamma}+\frac{J}{4 }),\]
which leads to
\begin{equation}\label{eq-VZ bound}
\E |V_t^\lambda-Z_t^{\lambda,n}|^2\leq \frac{\slg}{\gamma}+\frac{J}{4 } \quad  t \in [nT,(n+1)T].
\end{equation}
On the other hand, one notices that for every $ t\in [nT,(n+1)T]$
\begin{equation}\label{eq-basic2}
    \begin{aligned}
 \E |\theta^\lambda_t-\zeta_t^{\lambda,n}|^2&\leq \lambda^2 \E \biggr (\int_{nT}^t |\Vfls-Z_s^{\lambda,n}|ds \biggr)^2\leq \lambda \int_{nT}^t \E |\Vfls-Z_s^{\lambda,n}|^2ds
 \\&\leq 2\lambda \int_{nT}^t \E |V_s^\lambda-Z_s^{\lambda,n}|^2ds +2 \lambda \int_{nT}^t \E |V_s^\lambda-\Vfls|^2 ds\\&\leq 2\lambda \int_{nT}^t \E |V_s^\lambda-Z_s^{\lambda,n}|^2 ds +2C_{1,v}\lambda \gamma \\&\leq \frac{2\slg}{\gamma}+\frac{J}{2} +2C_{1,v}\lambda \gamma.
 \end{aligned}
\end{equation}
so that taking supremum in the inequality yields
\[J\leq \frac{2\slg}{\gamma}+\frac{J}{2} +2C_{1,v}\lambda \gamma,\]
which implies that
\begin{equation}\label{eq-zth-bound}
\E |\theta^\lambda_t-\zeta_t^{\lambda,n}|^2\leq J\leq \frac{4\slg}{\gamma}+4 C_{1,v}\lambda \gamma \quad  t \in [nT,(n+1)T],
\end{equation}
so that substituting this back into \eqref{eq-VZ bound} yields
\begin{equation}\label{eq-Vz final}
    \E |V_t^\lambda-Z_t^{\lambda,n}|^2\leq \frac{5}{4} \frac{\slg}{\gamma}+ C_{1,v}\lambda.
\end{equation}
Bringing \eqref{eq-zth-bound} and \eqref{eq-Vz final} together, one concludes that
\[W_2^2\left(\mathcal{L}(V_t^\lambda,\theta^\lambda_t),\mathcal{L}(Z_t^{\lambda,n},\zeta_t^{\lambda,n})\right)\leq 6(C_{1,v}+1)\left(\frac{\slg}{\gamma}+\lambda\gamma\right).\]
\end{proof}

\begin{proof} [\textbf{Proof of Theorem} \ref{Contr}]
Using the results obtained in Theorem \ref{Contraction} and \ref{Aux-alg distance} one calculates for every $t\in [nT,(n+1)T]$ that
\[\begin{aligned}
&W_2(\mathcal{L}(\zeta^{\lambda,n}_t,Z^{\lambda,n}_t),\mathcal{L}(r^{\lambda}_t,R^\lambda_t))\leq \sum_{k=1}^n W_2(\mathcal{L}(\zeta^{\lambda,k}_t,Z^{\lambda,k}_t),\mathcal{L}(\zeta^{\lambda,k-1}_t,Z^{\lambda,k-1}_t)\\&=\sum_{k=1}^n
W_2( \mathcal{L}(\hat{\zeta}_t^{\lambda,kT,\theta^\lambda_{kT},V_{kT}},\hat{Z}_t^{\lambda,kT,\theta^\lambda_{kT},V_{kT}}),\mathcal{L}(\hat{\zeta}^{\lambda,kT,\zeta^{\lambda,k-1}_{kT},Z^{\lambda,k-1}_{kT}},\hat{Z}^{\lambda,kT,\zeta^{\lambda,k-1}_{kT},Z^{\lambda,k-1}_{kT}}))\\&\leq
\sum_{k=1}^n \sqrt{2}e^{-\frac{m}{\beta \gamma}\lambda (t-kT)}W_2(\mathcal{L}(\theta^\lambda_{kT},V^\lambda_{kT}),\mathcal{L}(\zeta^{\lambda,k-1}_{kT},Z^{\lambda,k-1}_{kT})\\&\leq
\sqrt{2}C\sum_{k=1}^n e^{-\frac{m}{\beta \gamma}(n-k)}   \left(\sqrt{\lambda}\sqrt{\gamma}+\sqrt{\frac{\slg}{\gamma}}\right)
\\&\leq \sqrt{2}C\left(\sqrt{\lambda}\sqrt{\gamma}+\sqrt{\frac{\slg}{\gamma}}\right)\frac{1}{1-e^{-\frac{m}{\beta \gamma}}}
\\&\leq \sqrt{2}C\left(\sqrt{\lambda}\sqrt{\gamma}+\sqrt{\frac{\slg}{\gamma}}\right) \frac{\beta}{m} \gamma e^{\frac{m}{\beta \gamma}}
\\&\leq C'\left(\sqrt{\lambda}\gamma^\frac{3}{2} +\sqrt{\gamma} \sqrt{\slg}\right),
\end{aligned}\]
\end{proof}

\begin{proof}[\textbf{Proof of Theorem} \ref{Final rate}]
By the triangle inequality of Wasserstein distance and by Corollaries \ref{W2 estimate}, \ref{contrinv}, Lemma \ref{tamed distance} and Theorem \ref{Contr}, for every $t\in [nT,(n+1)T]$ there holds that
\[\begin{aligned}
 W_2(\mathcal{L}(V^\lambda_t,{\theta^\lambda_t}),\pi_\beta)&\leq
 W_2(\mathcal{L}(V^\lambda_t,\theta^\lambda_t),\mathcal{L}(Z^{\lambda,n}_t,Z^{\lambda,n}_t)+W_2(\mathcal{L}(Z^{\lambda,n}_t,Z^{\lambda,n}_t),\mathcal{L}(R^\lambda_t,r^\lambda_t)\\&+ W_2(\mathcal{L}(R^\lambda_t,r^\lambda_t),\pi_\beta^\epsilon)+W_2(\pi_\beta^\epsilon,\pi_\beta)\\&\leq
 C\left( \sqrt{\frac{\slg}{\gamma}}+\sqrt{\lambda}\sqrt{\gamma}\right)+C'\left(\sqrt{\lambda}\gamma^\frac{3}{2} +\sqrt{\gamma} \sqrt{\slg}\right)\\&+\sqrt{2}e^{-\frac{m}{\beta \gamma}\lambda n}W_2(\mathcal{L}(\theta_0,V_0),\pi_\beta^\epsilon)+W_2(\pi_\beta,\pi_\beta^\epsilon)
 \\&\leq C\left( \sqrt{\frac{\slg}{\gamma}}+\sqrt{\lambda}\sqrt{\gamma}\right)+C'\left(\sqrt{\lambda}\gamma^\frac{3}{2} +\sqrt{\gamma} \sqrt{\slg}\right)\\&+\sqrt{2}e^{-\frac{m}{\beta \gamma}\lambda t}W_2(\mathcal{L}(\theta_0,V_0),\pi_\beta)+
 W_2(\pi_\beta,\pi_\beta^\epsilon)\\&\leq
 6\max\{C,C',\sqrt{2}+\sqrt{2c/m}\}\left(\sqrt{\lambda}\gamma^\frac{3}{2}+\sqrt{\gamma}\sqrt{\slg}\right)\\&+\sqrt{2}e^{-\frac{m}{\beta \gamma}\lambda t}W_2(\mathcal{L}(\theta_0,V_0),\pi_\beta)+\epsilon \\&\leq
 \dot{C} \left(\sqrt{\lambda}\gamma^\frac{3}{2}+\gamma^{-q}
 \right)+\sqrt{2}e^{-\frac{m}{\beta \gamma}\lambda t}W_2(\mathcal{L}(\theta_0,V_0),\pi_\beta) +\epsilon,
\end{aligned}\]
at which point the result follows from the fact that $\mathcal{L}\left(\bar{V}^\lambda_{\lfloor t\rfloor},\bar{\theta}^\lambda_{\lfloor t\rfloor}\right)=\mathcal{L}\left(V^\lambda_{\lfloor t\rfloor},\theta^\lambda_{\lfloor t\rfloor}\right)$.
\end{proof}
\section{Proofs of Section \ref{se-tKLMC2}}
\begin{proof}[Proof of Lemma \ref{remarkgridpoints}]
For simplicity we prove that $\mathcal{L}(\bar{Y}^\lambda_1,\bar{x}^\lambda_1)=\mathcal{L}(\Tilde{Y}_\lambda,\Tilde{X}_\lambda)$.\\
     Expanding $\Tilde{Y}_\lambda$ one notices that
     \begin{equation}\label{eq-tildey}
     \begin{aligned}
         \Tilde{Y}_\lambda &=e^{-\lambda \gamma} \Tilde{Y}_0-e^{-\lambda \gamma}\hg(\Tilde{x}_0)\int_0^\lambda e^{\gamma s}ds +\sqrt{2\gamma \beta^{-1}}e^{-\lambda \gamma}\int_0^\lambda e^{\gamma s} dW_s
         \\&=\psi_0(\lambda) \Tilde{Y}_0 -e^{-\lambda \gamma}\hg(\Tilde{x}_0) \frac{1}{\gamma}(e^{\lambda \gamma}-1)+\sqrt{2\gamma \beta^{-1}}e^{-\lambda \gamma}\int_0^\lambda e^{\gamma s} dW_s
         \\&=\psi_0(\lambda) \Tilde{Y}_0 -\hg(\Tilde{x}_0) \int_0^\lambda \psi_0(s) ds +\sqrt{2\gamma \beta^{-1}}e^{-\lambda \gamma}\int_0^\lambda e^{\gamma s} dW_s
         \\&=\psi_0(\lambda) \Tilde{Y}_0 -\psi_1(\lambda)\hg(\Tilde{x}_0)   +\sqrt{2\gamma \beta^{-1}}e^{-\lambda \gamma}\int_0^\lambda e^{\gamma s} dW_s
     \end{aligned}
     \end{equation}
     One notices that $e^{-\lambda \gamma}\int_0^\lambda e^{\gamma s} dWs $ follows a zero mean Gaussian distribution and
\[\begin{aligned}
\E (e^{-\lambda \gamma}\int_0^\lambda e^{\gamma s} dW_s) (e^{-\lambda \gamma}\int_0^\lambda e^{\gamma s} dW_s) ^T&=e^{-2\lambda \gamma } \int_0^\lambda e^{2\gamma s}ds I_d\\&=\frac{1}{2\gamma}e^{-2\lambda \gamma}(e^{2\lambda \gamma}-1) I_d\\&=\frac{1}{2\gamma}(1-e^{-2\lambda \gamma})  I_d\\&=(\int_0^\lambda e^{-2\gamma s}ds) I_d\\&=(\int_0^\lambda \psi_0^2(s) ds) I_d
\end{aligned}
     \]
     For the second process,
  \begin{equation}\label{eq-tildex}
      \begin{aligned}
          \Tilde{x}_\lambda&=\Tilde{x}_0+ \int_0^\lambda \psi_0(s) ds\Tilde{Y_0} -\hg(\Tilde{x}_0)\int_0^\lambda \int_0^s e^{-\gamma(s-u)} du ds +\sqrt{2\gamma \beta^{-1}} \int_0^\lambda e^{-\gamma t}\int_0^t e^{\gamma s} dW_s dt
          \\&=\Tilde{x}_0 +\psi_1(\lambda)\Tilde{Y_0}-\hg(\Tilde{x}_0)\frac{1}{\gamma}\int_0^\lambda e^{-\gamma s}(e^{\gamma s}-1)  ds+\sqrt{2\gamma \beta^{-1}} \int_0^\lambda e^{-\gamma t}\int_0^t e^{\gamma s} dW_s dt
          \\&=\Tilde{x}_0 +\psi_1(\lambda)\Tilde{Y_0}-\hg(\Tilde{x}_0)\int_0^\lambda \psi_1(s) ds+\sqrt{2\gamma \beta^{-1}} \int_0^\lambda e^{-\gamma t}\int_0^t e^{\gamma s} dW_s dt
          \\&=\Tilde{x}_0 +\psi_1(\lambda)\Tilde{Y_0}-\psi_2(\lambda)\hg(\Tilde{x}_0)+\sqrt{2\gamma \beta^{-1}} \int_0^\lambda e^{-\gamma t}M_t dt
      \end{aligned}
  \end{equation}
  where $M_t:= \int_0^t e^{\gamma s} dW_s$.
     Since  \[d (e^{-\gamma t}M_t)= -\gamma e^{-\gamma t} M_t dt +e^{-\gamma t} dM_t= -\gamma e^{-\gamma t} M_t dt + dW_t \] or equivalently
     \[e^{-\gamma t} M_t=-\gamma\int_0^\lambda e^{-\gamma s} M_s ds + W_t\]
    setting $t=\lambda$ and using the fact that $M_\lambda=\int_0^\lambda e^{\gamma s}dW_s$, one deduces that
     \begin{equation}
    \int_0^\lambda e^{-\gamma t} M_t dt =\frac{1}{\gamma} \int_0^\lambda (1-e^{\gamma (t-\lambda)}) dW_t
     \end{equation}
     which is a zero-mean Gaussian distribution with
     \[\E ( \int_0^\lambda e^{-\gamma t} M_t dt)( \int_0^\lambda e^{-\gamma t} M_t dt)^T=\frac{1}{\gamma^2}\int_0^\lambda (1-e^{-\gamma t})^2 I_d=(\int_0^\lambda \psi_1^2(t)dt)I_d.\]
     Finally,
     \[\begin{aligned}
         \E \left(e^{-\lambda \gamma} \int_0^\lambda e^{\gamma s} dWs\right)\left(\frac{1}{\gamma}\int_0 (1-e^{\gamma (s-\lambda)}) dW_s\right)^T&= \frac{1}{\gamma}\left(\int_0^\lambda e^{\gamma (s-\lambda) }(1-e^{\gamma (s-\lambda)}) ds\right ) I_d\\&=\int_0^\lambda \psi_0(s)\psi_1(s)ds \quad I_d
     \end{aligned}\]
     We have essentially proved that the terms $\int_0^\lambda e^{-\gamma t} M_t dt$ appearing in \eqref{eq-tildex} and
     $e^{-\lambda \gamma} \int_0^\lambda e^{\gamma s}dW_s$ in \eqref{eq-tildey} follow Gaussian distributions with the required cross-covariance matrix.\\
     The result immediately follows.
     \end{proof}
\begin{proof}[Proof of Lemma \ref{mom-KLMC2}]
The proof begins by writing \begin{equation}\label{eq-conn}
    \begin{aligned}
    \bar{Y}_{n+1}&=(1-\lambda \gamma)\yn -\lambda \hg(\xn)+E_n +\sqrt{2\gamma\beta^{-1}}\Xi_{n+1}
    \\\bar{x}_{n+1}&=\xn +\lambda \yn +e_n+\sqrt{2\gamma\beta^{-1}}\Xi'_{n+1}
    \end{aligned}
\end{equation}
where \begin{equation}
    \begin{aligned}
    E_n&=(\psi_0(\lambda)-1+\lambda\gamma)\yn +(\lambda-\psi_1(\lambda))\hg(\xn)\\
    e_n&=(\psi_1(\lambda)-\lambda)\yn-\psi_2(\lambda)\hg(\xn).
    \end{aligned}
\end{equation}
By elementary calculations using the definition of $\psi_i$ one obtains
\begin{equation}\label{eq-lemmaconn1}
    \max\{|\psi_0-1+\lambda\gamma|,|\lambda-\psi_1(\lambda)|,|\psi_2(\lambda)|\}\leq (1+\gamma^2)\lambda^2.
\end{equation}
Additionally, using the definition of the covariance matrix of the $2d-$ Gaussian random variable and using the inequality $\sum_{i=0}^k x^{k}/k! \leq e^x$ for $k=1,2$, $x\leq 0$, there holds
\begin{equation}\label{eq-lemmaconn2}
    \begin{aligned}
    C_{11}(\lambda)&:=\E |\Xi_{n+1}|^2\leq \lambda d
    \\C_{22}(\lambda)&:=\E |\Xi'_{n+1}|^2\leq \frac{d}{3}\lambda^3
    \\ C_{12}(\lambda)&:= \E \langle \Xi_{n+1},\Xi'_{n+1}\rangle \leq \frac{d}{2}\lambda^2.
    \end{aligned}
\end{equation}
Using the expression in \eqref{eq-conn} we shall use the same construction as in the proof of \ref{second mom}, and we shall use \eqref{eq-lemmaconn1} and \eqref{eq-lemmaconn2} to bound the remaining terms.
\begin{equation}
    \frac{\gamma^2}{4}\E |\bar{x}_{n+1}+\gamma^{-1}\bar{Y}_{n+1}|^2=\frac{\gamma^2}{4}\E |\xn+\gamma^{-1}\yn-\gamma^{-1}\lambda \hg(\xn)|^2+\delta_2(n)
\end{equation}
where \begin{equation}
\begin{aligned}
\delta_2(n)&=\frac{\gamma}{2\beta}\left(C_{11}(\lambda)/2+\gamma^2C_{22}(\lambda)+\gamma C_{12}(\lambda)\right)+\frac{1}{4}\E |\gamma e_n+E_n|^2\\&+\frac{1}{2}\E \langle \gamma \xn+\yn-\lambda \hg(\xn),\gamma e_n +E_n\rangle
\\&\leq C_1\lambda \gamma + \gamma^2 \E |e_n|^2 +\E |E_n|^2
+\lambda^2\gamma^4 \E |\gamma \xn +\yn -\lambda \hg(\xn)|^2
\\&+\frac{\gamma^2}{2 \lambda^2\gamma^4} \E (|e_n|^2+|E_n|^2)
\\&\leq C_1 \lambda \gamma +C_2 \lambda^2 \gamma^6 \E |\xn|^2 + C_3 \lambda^2 \gamma^4\E |\yn|^2+ C_4 \lambda^2 \E |\hg(\xn)|^2
\end{aligned}
\end{equation}
where the last steps where obtained by elementary inequalities and the use of \eqref{eq-lemmaconn1}.
Furthermore
\begin{equation}
    \frac{1}{4}\E |\bar{Y}_{n+1}|^2=\frac{1}{4}\E |(1-\lambda \gamma \bar{Y}_n-\lambda\hg(\xn)|^2+\delta_3(n)
\end{equation}
where  \begin{equation}
    \begin{aligned}
    \delta_3(n)&=\frac{1}{4}\E |E_n|^2 +\frac{1}{2}\E \langle (1-\lambda \gamma \yn-\lambda \hg(\xn),E_n\rangle+\frac{\gamma}{2\beta}C_{11}(\lambda)
    \\&\leq C_1 \lambda^2 \gamma^4( \E |\yn|^2+\lambda^2 |\hg(\xn)|^2) +
    C'_2 \lambda \gamma d\beta^{-1}.
    \end{aligned}
\end{equation}
Finally, \begin{equation}
    \begin{aligned}
    -\frac{\gamma^2r}{4}\E |\bar{x}_{n+1}|^2\leq -\frac{1}{4}\gamma^2 r \E |\xn +\lambda \yn|^2 +\delta_4(n)
    \end{aligned}
    \end{equation}
    where
    \begin{equation}
       \begin{aligned}
       \delta_4(n)&=-\frac{1}{2}\gamma^2 r \E \langle \xn+\lambda \yn,e_n\rangle+C_{22}(\lambda)\\&\leq C'_4 \lambda ^2 \gamma^4 \E |\xn+\lambda \yn|^2 + C'_4 \frac{1}{\lambda^2} \E |e_n|^2+\frac{d}{3}\lambda^3\\&\leq
       C'_5 \lambda^2 \gamma^4 \big(\E |\hg(\xn)|^2 +\E |\yn|^2 +\E |\xn|^2\big).
       \end{aligned}
    \end{equation}
As a result, one notices that
\begin{equation}
    \sum_{i=2}^4 \delta_i(n)\leq C'_6 \lambda^2 \gamma^6 (\E |\xn|^2 +\E |\yn|^2) +C_7' \lambda \gamma.
\end{equation}
Adapting the proof of \ref{second mom}, and defining
\begin{equation}
M_n=\frac{\gamma^2}{4} |\xn+\gamma^{-1}\bar{Y}_n|^2 +\frac{1}{4} |\bar{Y}_n|^2-\frac{r\gamma^2}{4} |\xn|^2
\end{equation}
as before, there holds
\[\begin{aligned}
\E [M_{n+1}|M_n]&\leq (1-\lambda r\gamma) M_n + \lambda \left(\lambda\gamma^2\frac{1}{4} +\frac{1}{2m}+\frac{1}{4}+\frac{r}{2}\gamma -\frac{\gamma}{2}\right) \E |\yn|^2
    \\& +\lambda \left( -\frac{m\gamma}{4}+r\gamma^3 +\frac{1}{2}\lambda m^2 +\frac{m^3}{16}\right)\E |\xn|^2 + \sum_{i=2}^4 \delta_i(n) +C_0 \lambda \gamma
    \\&\leq (1-\lambda r\gamma) M_n + \lambda \left(\lambda\gamma^2\frac{1}{4} +\frac{1}{2m}+\frac{1}{4}+\frac{r}{2}\gamma -\frac{\gamma}{2} +C'_6\lambda \gamma^6 \right) \E |\yn|^2
    \\& +\lambda \left( -\frac{m\gamma}{4}+r\gamma^3 +\frac{1}{2}\lambda m^2 +\frac{m^3}{16}+C'_6\lambda \gamma^6 \right)\E |\xn|^2 +C'_8 \lambda \gamma
\end{aligned}\]
If $\lambda <\gamma^{-5} \frac{1}{4C'_6}$ the terms that multiply $\E|\xn|^2$ and $\E |\yn|^2$ are negative so
\[\E M_{n+1}\leq (1-\lambda r \gamma)\E M_n +C'_8 \lambda \gamma\]
which yields the result.
\end{proof}
\begin{proof}[\textbf{Proof of Lemma \ref{HighmomKLMC2}}]
Applying once again the method used in the proof of Lemma \ref{high mom} one sees that
\begin{equation}
M_{n+1}\leq (1-r\lambda \gamma) M_n + C'_K K'_n
\end{equation}
where $C'_K$ is an absolute constant and $K'_n=K'_{1,n}+K'_{2,n}+K'_{3,n}+C'_8\lambda \gamma $ is given by \begin{equation}
    \begin{aligned}
    K'_{1,n}&= \sqrt{2\gamma\beta^{-1}}\langle \gamma \xn +\yn -\lambda \hg(\xn), \Xi_{n+1}+\Xi'_{n+1} \rangle \\&+\frac{8\gamma}{\beta}\left( |\Xi_{n+1}|^2+|\Xi'_{n+1}|^2\right)
    \\& +\sqrt{2\gamma \beta^{-1}}\langle E_n+\gamma e_n, \Xi_{n+1}+\Xi'_{n+1}\rangle
    \\&\leq C \sqrt{2\gamma \beta^{-1}} \sqrt{M_n}(|\Xi_{n+1}|+|\Xi'_{n+1}|)
    \\&+\frac{8\gamma}{\beta}\left( |\Xi_{n+1}|^2+|\Xi'_{n+1}|^2\right)
    \\& +\sqrt{2\gamma \beta^{-1}} \lambda^2 (1+\gamma^2)(\sqrt{M_n}+1)|\Xi_{n+1}+\Xi'_{n+1}|,
    \end{aligned}
\end{equation}
\begin{equation}
    \begin{aligned}
    K'_{2,n}&= \sqrt{2\gamma \beta^{-1}}\langle (1-\lambda \gamma)\yn -\lambda \hg(\xn),\Xi_{n+1}\rangle +2\gamma\beta^{-1}|\Xi_{n+1}|^2+\sqrt{2\gamma\beta^{-1}}\langle E_n,\xi_{n+1}\rangle \\&\leq C`_2\left(\sqrt{\gamma\beta^{-1}} (\sqrt{M_n}+1)|\Xi_{n+1}|+2\gamma\beta^{-1}|\Xi_{n+1}|^2+\lambda^2(1+\gamma^2)\frac{2\gamma}{\beta}\sqrt{M_n}|\Xi_{n+1}|\right)
    \end{aligned}
\end{equation}
and
\begin{equation}
    \begin{aligned}
    K'_3=-\frac{1}{2}\gamma^2 r \langle \xn +\lambda \yn ,\sqrt{2\gamma\beta^{-1}}\Xi'_{n+1}\rangle \leq C`_3 \sqrt{2\gamma \beta^{-1}}\sqrt{M_n}|\Xi'_{n+1}|.
    \end{aligned}
\end{equation}
Combining the expressions for $K'_1,K'_2,K'_3$ yields
\begin{equation}
    K_n\leq C_K \sqrt{2\beta^{-1}} \sqrt{\gamma} \sqrt{M_n}(|\Xi_{n+1}|+|\Xi'_{n+1}|) +\frac{\gamma}{\beta}(|\Xi_{n+1}|^2+|\Xi'_{n+1}|^2)+\lambda \gamma
\end{equation}
so using that $\lambda^2 \gamma^2\leq \lambda \gamma$ and the expressions for the expectations of the squares of $\Xi,\Xi'$ one obtains the crucial estimates
\begin{equation}
    \E_n K'_n=C_{K,1}\lambda\gamma
\end{equation}
\begin{equation}
    \E_n K_{n} ^2\leq C_{K,2}\lambda \gamma (M_n + 1)
\end{equation}
and
\[\E_n K_n ^{2p}\leq C_{K,p}\lambda^p \gamma^p (M_n^p +1)\]
where $C_{K,p}$ has at most $(\frac{d}{\beta})^p$ dependence on the dimension.

Using the same techniques as in the proof of the higher moments of KLMC1 one obtains
\[\begin{aligned}
\E_n |M_{n+1}|^{2p}&\leq  s |M_n|^{2p} +2q |M_n|^{2p-1} \E_n K_n\\&+p(2p-1)2^{2p-3}|M_n|^{2p-2}\E_n |K_n|^2+p(2p-1)2^{2p-3}\E_n |K_n|^{2p}
\\&\leq 2C_{K,1}s M_n^2p q\lambda \gamma M_n^{2q-1} +\lambda\gamma p(2p-1)2^{2p-3}M_n^{(2p-1)}+(C_{K,1}+C_{K,p})\lambda \gamma
\end{aligned}\]
Following again the same steps as in that proof one deduces that there exists $N'$ such that
\begin{equation}
    \E _n M_{n+1}^{2p}\leq (1-\lambda \gamma^{-1}m/32) M_n^{2p}+\lambda  N'\gamma^{4q-1}
\end{equation}
which leads to
\[\sup_n \E  |M_n|^{2q}\leq \E |M_0|^{2q} + \frac{32 N'}{m} \gamma^{4q}.\]
Since
\[\E |\xn|^{4q}\leq 8^{2q} \frac{1}{\gamma^{4q}}(\frac{1}{1-2r})^{2q} \E |M_n|^{2q}\leq \bar{C}_{4q},\]
one concludes that \[\E|\xn|^{2q}\leq \sqrt{\bar{C}_{4q}}.\]
\end{proof}
\begin{proof}[\textbf{ Proof of Lemma \ref{tKLMC2conv}} ]
One begins by calculating via Jensen's inequality
\begin{equation}\label{eq-obs}
    \E |\tilde{x}_t-\p_t|^2=\E |\int_{n\lambda}^t \Tilde{Y}_s-\q_s ds|^2 \leq (t-n\lambda) \E \int_{n\lambda}^t | \Tilde{Y}_s-\q_s|^2 ds\leq \lambda  \int_{n\lambda}^t \E | \Tilde{Y}_s-\q_s|^2 ds.
\end{equation}
\[\begin{aligned}
\E |\tilde{Y}_t-\q_t|^2&=\E \left | \int_{n\lambda}^t e^{-\gamma(t-s)}( \he(\p_s)-\hg(\tilde{x}_{n\lambda}))ds\right|^2
    \\&{\leq} (t-n\lambda) \int_{n\lambda}^t e^{-2\gamma(t-s)} \E |\he(\p_s)-\hg(\tilde{x}_{n\lambda})|^2 ds\\&\leq
    \lambda \int_{n\lambda}^t \E |\he(\p_s)-\hg(\tilde{x}_{n\lambda})|^2 ds\\&\leq
    8\lambda \int_{n\lambda}^t \E |\he(\p_s)-\he(\tilde{x}_{n\lambda})|^2ds
    \\&+  8\lambda \int_{n\lambda}^t \E |\he(\tilde{x}_{n\lambda})-h(\tilde{x}_{n\lambda})|^2 ds
    \\& + 8 \lambda \int_{n\lambda}^t \E |h(\tilde{x}_{n\lambda})-\hg(\tilde{x}_{n\lambda})|^2 ds.
\end{aligned}\]
For the first term, a further analysis yields
\begin{equation}\label{eq-term1}
\begin{aligned}
8\lambda \int_{n\lambda}^t \E &|\he(\p_s)-\he(\tilde{x}_{n\lambda})|^2ds \leq 8\lambda K^2 \int_{n\lambda} ^t \E |\p_s -\tilde{x}_{n\lambda}|^2ds\\&\leq
16\lambda K^2 \int_{n\lambda}^t \E|\tilde{x}_s -\tilde{x}_{n\lambda}|^2ds
+16\lambda K^2 \int_{n\lambda} ^t \E |{\tilde{x}_s-\p_s}|^2
\\&\leq  16\lambda K^2 \int_{n\lambda}^t \E |\int_{n\lambda}^s \tilde{Y}_u du|^2 ds + 16\lambda K^2 \int_{n\lambda}^t \E|\tilde{x}_s -\p_s|^2ds  \\&\leq
16 K^2 \lambda^4 \sup_{s\in [n\lambda,(n+1)\lambda} \E |\tilde{Y_s}|^2 +16\lambda K^2 \int_{n\lambda}^t \E|\tilde{x}_s -\p_s|^2ds  .
\end{aligned}
\end{equation}
For the second term,
there holds
\begin{equation}\label{eq-term2}
 8\lambda \int_{n\lambda}^t \E |\he(\tilde{x}_{n\lambda})-h(\tilde{x}_{n\lambda})|^2 ds\leq C_p \lambda^2 \epsilon^2
\end{equation}
For the third term,
\begin{equation}\label{eq-term3}
    8 \lambda \int_{n\lambda}^t \E |h(\tilde{x}_{n\lambda})-\hg(\tilde{x}_{n\lambda})|^2 ds\leq C_3 \lambda^2 \gamma^{-2q+1}.
\end{equation}
Inserting \eqref{eq-term1} \eqref{eq-term2} \eqref{eq-term3} into \eqref{eq-obs} yields
\[\begin{aligned}
\sup_{n\lambda \leq s \leq t} \E |\tilde{x}_s-\p_s|^2&\leq \lambda^2 R_{\lambda,\gamma}+ 16\lambda^2 K^2 \int_{n\lambda}^t \int_{n\lambda}^s \sup_{n\lambda\leq u\leq z} \E|\tilde{x}_u-\p_u|^2  dzds \\&\leq \lambda^2 R_{\lambda,\gamma} +16\lambda^3 K^2 \int_{n\lambda}^t \sup_{n\lambda \leq u \leq s} \E |\tilde{x}_s-\p_s|^2 ds
\end{aligned}\]
An application of Grownwall's inequality yields
\begin{equation}\label{eq-rate1}
    \E |\tilde{x}_t-\p_t|^2\leq \lambda^2 R_{\lambda\gamma}e^{16\lambda^3 K^2 (t-n\lambda)}.
\end{equation}
Inserting this into \eqref{eq-term1} and combining all together yields
\[
\E |\tilde{Y}_t-\q_t|^2\leq (1+16K^2\lambda^4)R_{\lambda,\gamma}\leq 2 R_{\lambda,\gamma}\] where $R_{\lambda,\gamma}$ is given by
\begin{equation}\label{eq-Rlg}
    R_{\lambda,\gamma}=\lambda^4 16K^2\sup_{[n\lambda\leq s\leq (n+1)\lambda]}\E |\tilde{Y}_s|^2+ C_3 \lambda^2 \gamma^{-2q+1}  + C_p\lambda^2 \epsilon^2.
\end{equation}
\end{proof}
\begin{proof}[\textbf{Proof of Lemma \ref{Lemmacontra}}]
Similarly as in the proof of \ref{Final rate}, one calculates via Lemmas \ref{tKLMC2conv}, \ref{Lemmacontra}
\[\begin{aligned}
&W_2(\mathcal{L}(p^{\lambda,n},Q^{\lambda,n}),\mathcal{L}(X_t,Y_t))\leq \sum_{k=1}^n W_2(\mathcal{L}({p^{\lambda,k}},Q^{\lambda,k}),\mathcal{L}(p^{\lambda,k-1},Q^{\lambda,k-1}))
\\&\leq \sum_{k=1}^n W_2\left(\mathcal{L}(\hat{p}^{k\lambda,(\tilde{x}_{k\lambda},\tilde{Y}_{k \lambda})},\hat{Q}^{k\lambda,(\tilde{x}_{k\lambda},\tilde{Y}_{k \lambda})}),\mathcal{L}(\hat{p}^{k\lambda,(p^{\lambda,k-1}_{k\lambda},Q^{\lambda,k-1}_{k \lambda})},\hat{Q}^{k\lambda,(p^{\lambda,k-1}_{k\lambda},Q^{\lambda,k-1}_{k \lambda})})\right)
\\&\leq \sum_{k=1}^n \sqrt{2}e^{-\frac{m}{\beta \gamma}(t-k\lambda)} W_2(\mathcal{L}(\tilde{x}_{k\lambda},\tilde{Q}_{k \lambda}),\mathcal{L}(p^{\lambda,k-1}_{k\lambda},Q^{\lambda,k-1}_{k\lambda}))
\\&\leq \sum_{k=1}^n \sqrt{2}e^{-\frac{m}{\beta \gamma}\lambda(n-k)} W_2(\mathcal{L}(\tilde{x}_{k\lambda},\tilde{Q}_{k \lambda}),\mathcal{L}(p^{\lambda,k-1}_{k\lambda},Q^{\lambda,k-1}_{k\lambda}))\\&\leq
\sqrt{2}C \sqrt{R_{\lambda,\gamma}}\sum_{k=1}^n \sqrt{2}e^{-\frac{m}{\beta \gamma}\lambda(n-k)}\\&\leq
\sqrt{2}C \sqrt{R_{\lambda,\gamma}} \frac{1}{1-e^{-\frac{m}{\beta \gamma}\lambda}}
\\&\leq \sqrt{2}C\sqrt{R_{\lambda,\gamma}}\frac{\beta \gamma}{m\lambda }e^{\frac{m}{\beta \gamma}\lambda}
\\&\leq C' \frac{\gamma}{\lambda} \sqrt{R_{\lambda,\gamma}}.
\end{aligned}\]
\end{proof}
\section{Proofs of section \ref{se-opt}}
\begin{proof}[\textbf{Proof of Lemma} \ref{opt-lemma2}]
By strong convexity there holds that for every $x\in E$
\begin{equation}\label{eq-stc}
u(x)-u(y)\leq \langle h(x),x-y\rangle \leq |h(x)||x-y|\leq L(1+2R_0)^{l+1}|x-y|.\end{equation}
Furthermore, since $\E u(x_n)=\E u(X_n\mathds{1}_E(X_n))$, setting $x:=X_n\mathds{1}_E(X_n)$ and $y:=X\mathds{1}_E(X_n)$ in \eqref{eq-stc} yields
\[\E u(x_n)-u(X\mathds{1}_E(X_n)\leq \E L(1+2R_0)^{l+1}|X_n\mathds{1}_E(X_n)-X\mathds{1}_E(X_n)|\leq L(1+2R_0)^{l+1}\E |X_n-X|, \] which by Cauchy-Swartz inequality implies
\[\E u(x_n)-u(X\mathds{1}_E(X_n)\leq L(1+2R_0)^{l+1}\sqrt{\E |X_n-X|^2}=L(1+2R_0)^{l+1}W_2(\mathcal{L}(\bar{\theta}_n^\lambda,\mu_\beta).\]
\end{proof}
\begin{proof}[\textbf{Proof of Lemma} \ref{opt-lemma2}]
We begin by calculating
\begin{equation}\label{eq-step1}
\begin{aligned}
 \E [u(X\mathds{1}_E(X_n))]-u(x^*)&\leq
 \E [u(X)\mathds{1}_E(X_n)]-u(x^*) \\&+u(0)P(X_n\in E^c)\\&=
 \E [u(X)\mathds{1}_E(X_n)\mathds{1}_{E'}(X)] + \E[ u(X)\mathds{1}_E(X_n)\mathds{1}_{E'^c}(X)]-u(x^*) \\&+u(0)P(X_n\in E^c)
 \\&\leq \E [u(X)\mathds{1}_{E'}(X)]-u(x^*)+ \sqrt{\E u^2(X)}\sqrt{P(\{X_n\in E\}\cap \{X\in E'^c\})}\\&+u(0)P(X_n\in E^c).
\end{aligned}
\end{equation}
In addition, one notices that $u$ is $M_0=L(1+2(R_0+1))^l$ smooth on $E'$ so
  \begin{equation}\label{eq-step2}
      \E [u(X)\mathds{1}_{E'}(X)]-u(x^*)\leq \int_{E'}u(x)-u(x^*)d\mu_\beta(x)\leq \frac{M_0}{2}\int_{E'}|x-x^*|^2d\mu_\beta(x)\leq \frac{M_0}{2}\E |X-x^*|^2.
  \end{equation}
Furthermore, since $u$ is convex
\[\begin{aligned}
 u(x)-u(0)&\leq \langle h(x)-h(x^*),x\rangle\\&\leq L(1+|x|+|x^*|)|x-x^*||x|\leq L(1+|x|+\frac{2u(0)}{m})^{l+2}\\&\leq 2^{l+2}L(1+\frac{2u(0)}{m})^{l+2} + 2^{l+2}L|x|^{l+2} ,
\end{aligned}
\]
and in addition \begin{equation}\label{eq-momu2}
    \E u^2(X)\leq 2u^2(0) + 2^{2l+8}L^2(1+\frac{2u(0)}{m})^{2l+4}+2^{2l+8} E|X|_{2l+4} .
\end{equation}
Then since
\[\begin{aligned}
 P(\{X_n\in E\}\cap \{X\in E'^c\})\leq P(|X_n-X|>1)\leq \E |X_n-X|^2,
\end{aligned}\]
one deduces that
\begin{equation}\label{eq-step3}
    \begin{aligned}
     ((E|X|_{2l+4})^{1/2}+ C')\sqrt{P(\{X_n\in E\}\cap \{X\in E'^c\})}\leq C''(||X||_{2l+4}^{l+2}+1)W_2(\bar{\theta}_n^\lambda,\mu_\beta).
    \end{aligned}
\end{equation}
Finally one has the bound
\begin{equation}\label{eq-step4}
    \begin{aligned}
     P(X_n\in E^c)\leq P(|X_n-x^*|>1)\leq \E |X_n-x^*|^2\leq 2\E |X-x^*|^2 +W_2(\bar{\theta}_n^\lambda,\mu_\beta),
    \end{aligned}
\end{equation}
so that inserting \eqref{eq-step2},\eqref{eq-step3},\eqref{eq-step4} into \eqref{eq-step1} completes the proof.

\end{proof}

\begin{proof}[\textbf{Proof of Lemma} \ref{opt-lemma3}]
Consider the Langevin SDE given as
\[d\bar{L}_t= -h(\bar{L}_t)dt +\sqrt{\frac{2}{\beta}} dB_t ,\]
with initial condition $L_0=x^*$. By Ito's formula, one obtains
\[\begin{aligned}
   d |\bar{L}_t-x^*|^2&=-2   \langle h(\bar{L}_t),\bar{L}_t-x^*\rangle dt + \frac{2d}{\beta} dt + dG_t \\&= -2   \langle h(\bar{L}_t)-h(x^*),\bar{L}_t-x^*\rangle dt +\frac{2d}{\beta} dt + dG_t\\&\leq -m  |\bar{L}_t-x^*|^2 dt +\frac{2d}{\beta} dt +dG_t.
\end{aligned}\]
where $dG_t$ is a martingale.
Taking expectations, and since by \eqref{eq-oversecondm} one has that $\sup_{t\geq0}\E|\bar{L}_t|^2<\infty$, we are able to interchange the differentiation with expectation so that
\[\frac{d}{dt} \E |\bar{L}_t-x^*|\leq -m \E |\bar{L}_t-x^*|^2 dt + \frac{2d}{\beta},\]
Then applying Grownwall's inequality yields
\[\E |\bar{L}_t-x^*|^2\leq \frac{2d}{\beta m}, \quad  t>0.\]
Due to the fact that $\bar{L}_t$ converges in the $W_2$ distance to the invariant measure, one obtains $L_1$ convergence and the bound $\E |\bar{L}_t|^2\rightarrow  \E |X|^2$, so since there also holds \[|\E |\bar{L}_t-x^*|^2-\E |X-x^*|^2|= \left| \E |\bar{L}_t|^2 -\E |X|^2 \right| + 2|x|^*\E |\bar{L}_t-X|,\]
there follows that
\[\E |X-x^*|^2=\lim_{t\rightarrow\infty} \E |\bar{L}_t-x^*|^2\leq \frac{2d}{m\beta}.\]
\end{proof}
\begin{proof}[\textbf{Proof of Corollary} \ref{opt-solution}]
Combining Lemmas \ref{opt-lemma1},\ref{opt-lemma2},\ref{opt-lemma3} and using the fact that \[W_2(\mathcal{L}(\bar{\theta}^\lambda_n),\mu_\beta)\leq W_2(\mathcal{L}(\bar{\theta}^\lambda_n,\bar{V}^\lambda_n),\pi_\beta),\] the result follows immediately.
\end{proof}
\newpage
\section{Table of constants}

\begin{table}[h]
     \renewcommand{\arraystretch}{2}
    \centering
    \caption{Basic constants and dependency on key parameters}
    \begin{tabular}{@{}lllll@{}}
    \toprule
     \multicolumn{1}{c}{Constant} &  \multicolumn{3}{c}{Key parameters}   \\ \hline
     \phantom{Constant} &$\frac{d}{\beta}$ & $m$& $\E |M_0|^p$\\
     $\tilde{C}_p$ & $(\frac{d}{\beta})^q$ & $m^{-q}$ & $\sqrt{\E |M_0|^{2p}}/\gamma^{2q}$\\
     $\tilde{B}_2$ & $\frac{d}{\beta}$& $(2m+1)^2$& $\E |V_0|^2 $
     \\$C_{\mu_\beta,p}$ &$(\frac{d}{\beta})^\frac{p}{2}$& $m^{-p/2}$ &-
     \end{tabular}
\end{table}
\begin{table}[h]
\renewcommand{\arraystretch}{2}
\centering
    \caption{Derived constants and their dependency to basic constants}
    \label{tab:derived constants}
    \begin{tabular}{@{}lllll@{}}
    \toprule
     \multicolumn{1}{c}{Constant} &  \multicolumn{4}{c}{Key parameters}   \\     \phantom{Constant}& $\beta$ & $\tilde{C}_p$ &$\tilde{B}_2$& $C_{\mu_\beta,p}$   \\ $C_{1,v}$ & - & $\tilde{C}_2$& - & -
     \\ $C_A$ &-& $\tilde{C}_{2(l+2)+2q(2l+2)}$ &-
     \\ $c$ &-& -& -& $\sqrt{C_{\mu_\beta,4l+4}}^{4l+4}$
     \\ $C$ &-&$\sqrt{\tilde{C}_{2(l+2)+2q(2l+2)}}$& $\sqrt{\tilde{B}_2}$ &-
     \\ $\dot{C}$ & $\beta$ & $\sqrt{\tilde{C}_{2(l+2)+2q(2l+2)}}$& $\sqrt{\tilde{B}_2}$ &$\sqrt{C_{\mu_\beta,4l+4}}^{4l+4}$
     \end{tabular}
     \end{table}
     \newpage
\bibliography{Convex_Hamiltonian}
\end{document}